 \theoremstyle{plain}
\newtheorem{theo}{Theorem}[section]
\newtheorem{theore}{Theorem}[section]
\newtheorem{rrema}[theore]{Remark}
\newtheorem{pr}[theo]{Proposition}
 \newtheorem{lem}[theo]{Lemma}
 \newtheorem{coro}[theo]{Corollary}
\theoremstyle{remark}
\newtheorem{rema}[theo]{Remark}
\theoremstyle{definition}
\newtheorem{defi}[theo]{Definition}
\newcommand\ob{^{-1}}
\newcommand\obj{\operatorname{Obj}}
\newcommand\mo{\operatorname{Mor}}
\newcommand\id{\operatorname{id}}
\newcommand\z{{\mathbb{Z}}}
\newcommand\q{{\mathbb{Q}}}
\newcommand\af{\mathbb{A}}
\newcommand\p{\mathbb{P}}
\newcommand\pt{pt}
\newcommand\eps{\varepsilon}
\newcommand\al{\alpha}
\newcommand\gam{\gamma}
\newcommand\ns{\{0\}}
\DeclareMathOperator{\RHom}{\underline{\operatorname{Hom}}}
\DeclareMathOperator\inli{\varinjlim}
\newcommand\spe{\operatorname{Spec}}
\newcommand\card{Card}
 \DeclareMathOperator\Ker{\operatorname{Ker}}
\DeclareMathOperator\cha{\operatorname{char}}
\newcommand\zl{\z_l}
\newcommand\znz{\z/n\z}
\newcommand\zmz{\z/m\z}
\newcommand\zlz{\z/\ell\z}
\newcommand\zlnz{\z/\ell^n\z}
\newcommand\zlmz{\z/\ell^m\z}
\newcommand\zlmiz{\z/\ell^{m+1}\z}
\newcommand\ztz{\z/2\z}
\newcommand\hd{hd}
\newcommand\com{{\mathbb{C}}}
\newcommand\modd{\operatorname{mod}}
\newcommand\gi{\mathfrak{i}}
\newcommand\gj{\mathfrak{j}}
\newcommand\gff{\mathcal{F}}
\newcommand\ds{D^b\,Sh^{et}_{\znz-\modd}}
\newcommand\dsc{D^b_c\,Sh^{et}_{\znz-\modd}}%??!!
\newcommand\dsn{D_n}
\newcommand\iso{\xrightarrow{
   \,\smash{\raisebox{-0.5ex}{\ensuremath{\scriptstyle\sim}}}\,}}
\begin{document}

%\dedicate{To my wife Olga}
 %\title%[Fat weak Lefschetz and Barth's theorems for arbitrary characteristic]
 %{A "thick  weak Lefschetz" theorem  and %Barth-type results  applications (in arbitrary characteristic)} %change the name????! sheaf-theoretic?! wall?! Cf. {gm}?!
 \title{ Some general \'etale Weak Lefschetz-type theorems %(over arbitrary fields) %A generalization of the Lefschetz hyperplane theorem for etale cohomology and applications %A non-proper weak Lefchetz theorem and \'etale Barth-type applications
 }
 \author{Sergei I. Arkhipov,  Mikhail V. Bondarko
   \thanks{  The research was supported by the Leader (Leading scientist Math) grant no. 22-7-1-13-1 of  the Theoretical Physics and Mathematics Advancement Foundation «BASIS». }} %the RSF grant no 16-11-10073.}}
 \maketitle

\begin{abstract}

We prove an 'algebraic thick hyperplane section'  weak Lefschetz-type %result for  $\znz$-
 statement for the  \'etale cohomology of a  %local complete intersection
  variety $X' / K$ that is not necessarily proper; %, where $\operatorname{char} K$ doesn't divide $n$; 
  % somewhat 
  a %topological version
  certain related theorem %of this  result 
   for 
$X'/ \com\p^N$ was previously established by Goresky and MacPherson using stratified Morse theory. In contrast to their %topological 
statement (whose formulation mentions a small neighbourhood of a hyperplane section of $X'$)  % (that relate  $\pi_i(X')$ to $\pi_i(s\ob(\com\p^{N-k}_{\eps}))$, where $\com\p^{N-k}_{\eps}$ is the $\eps$-neighbourhood of $\com\p^{N-k}$ in $\com\p^N$ for a small $\eps>0$) 
 we  %formulate and 
 easily formulate and prove our %(cohomological) 
 theorem using purely sheaf-theoretic methods; this makes it independent of the base field characteristic.  %We are 
 Convenient sheaf-theoretic methods (in particular,  proper and smooth base change) enable us to use %Considering 
 our %weak Lefschetz-type    theorem 
  result as a substitute for the one of Goresky and MacPherson.  %is an important idea that    Thisallows to Respectivelly, we combine the implications of their statement %mentioned
  %(that were previously known for complex varieties  and quasi-finite $s$ only)  with   convenient sheaf-theoretic methods (in particular, with proper and smooth base change).
%So we extend  (generalizations of) 
 Applying the corresponding weak Lefschetz-type isomorphism along  with an argument proposed by Deligne (and implemented by W. Fulton and R. Lazarsfled in the complex case) we obtain %certain
 vast generalizations of important theorems of
%the seminal results of 
W. Barth and A.J. Sommese %others 
to base fields of arbitrary  characteristics. 
 %This gives
  So, we compute the lower \'etale cohomology of  closed subvarieties of $\p^N$ of small codimensions and of their preimages with respect to proper morphisms (that are not necessarily finite; this statement is completely new), and also of the zero loci of sections of ample vector bundles.
Moreover, we study certain numerical characteristics of varieties that measure their failures to be local complete intersections, and relate these characteristics to our main results.

MSC 2020 classification: primary 14F20, 14G17;  %11G25; 
 secondary 14M10, %32S50,  19F27, 16E10,
 13B40, 18F20, 55R25. 
 %14F22??

%Keywords:  weak Lefschetz, \'etale cohomology, hyperplane section, characteristic p. %  neighbourhood, henselization. 

\end{abstract}

\tableofcontents

 \section*{Introduction}

In contemporary algebraic geometry, there are %a lot of
 several 'descendants' of the classical weak Lefschetz theorem (a rich collection of those that includes the theorem of Barth on the cohomology of closed subvarieties of $\p^N$ of low codimensions, can be found in \cite{lazar}). Whereas for the %'ordinary' 
 usual  weak Lefschetz theorem the classical Artin's vanishing yields a purely algebraic proof (that works over an arbitrary characteristic base field $K$),  the proofs of several other results %are often somewhat 'topological', or use Hodge theory, or De Rham cohomology
  often rely on certain topological arguments or on Hodge theory (or  De Rham cohomology). These proof methods restrict the statements obtained to the %characteristic zero
 case $\cha K=0$ (the literature for this case is so vast that the authors will not attempt to mention all of these results).

In particular, one of the powerful tools for studying these weak Lefschetz-type questions for $K=\com$  (as shown in particular in \S9 of \cite{fulaz};\footnote{The  method used in loc. cit. was proposed by Deligne.} cf. also \S3.5.B of \cite{lazar}) is %the 'thick (multiple) hyperplane section' 
the theorem formulated in \S II.1.2 of \cite{gorma}. For a quasi-finite morphism $s:X'\to \com\p^N$ %($N>0$, $X'$ is a local complete intersection) 
 and any small enough $\varepsilon>0$ it states that the lower homotopy groups of $X'$ are isomorphic to those of $s\ob (\com\p^{N-k}_\varepsilon)$, where  $\com\p^{N-k}_\varepsilon$ is the $\varepsilon$-neighbourhood (in the sense of some Riemannian metric for $\com\p^N$) for the standard embedding $\gi$ of $\com\p^{N-k}$ into $\com\p^N$ ($0<k<N$; cf. the caution below). %\footnote{The general case of loc. cit. the} 
 Thus one may say that loc. cit. treats thick (multiple) hyperplane sections.  Note that even the formulation of this statement requires some 'topology', whereas the proof given in ibid. heavily relies on stratified Morse theory. Thus, both the formulation and the proof of loc. cit. are far from being algebraic; therefore, there is no chance to extend them to the 
 case $\cha K>0$. %positive base field characteristic case. %(and in the opinion of the author, the proof mentioned is complicated, and is very long to write down in full detail).

%The author has made the following easy observation that led to the writing of this paper:
The current paper grew out of the following simple observation: one can easily prove a certain algebraic analogue of the aforementioned statement that %mentions
 computes the lower (hyper)cohomology of $\com\p^{N-k}$ with coefficients in $R\gi^*Rs_{*}\znz_{X'}$ (where $n>0$ is prime to $p=\cha K$).
The corresponding statement (see Theorem  \ref{tgara})  is valid both for singular and for \'etale cohomology. % (with $\znz$-coefficients). 
Although its proof relies on certain results of \cite{bbd} %and \cite{kw} 
 (since the perverse $t$-structure is a very convenient tool for our purposes), its base is just the classical Artin's vanishing theorem. % (see \S3 of \cite{sga414}). 
 Respectively, our theorem %(in the context of \'etale cohomology)
  does not depend on the choice of  %the base field $K$ (and on its characteristic $p$)
 $K$ (and on its characteristic $p$); this is one of its main advantages over the corresponding statements of \cite{gorma}. 
 Also, we do not demand $s$ to be quasi-finite. %???? (so, our result is new even for $K=\com$; in several remarks and statements %Remark \ref{rara} %and \ref{rfulaz}  below we say more about the comparison of our results with those of ibid.). % \cite{gorma}) %, including Theorem II1.1 of ibid.).
  %we immediately obtain a (generalized) weak Lefschetz theorem for projective ('almost') locally set-theoretic complete intersections in arbitrary characteristic ($\neq l$).
 In addition, our sheaf-theoretic result can be easily combined with  proper and smooth base change; see (the proof of) Theorem \ref{tdef} %and Theorem \ref{tgara} 
  below. This allows us to use it as a substitute for the theorem of \cite{gorma} in the argument of Deligne described in \S9 of \cite{fulaz}. As a consequence, we
obtain certain cohomological analogues of the statements of loc. cit.  over fields of arbitrary characteristic (that doesn't divide $n$).  
In particular, we easily extend %(a generalization of) %??!!?? state??!!
%the (generalized) 
 and generalize important theorems of W. Barth to this context (cf. Theorem \ref{tbar}). %; %this  one of these results gives a modification of \cite[Theorem 10.5(iv)]{lyubez}. 
 %the case of arbitrary base field characteristic (that is distinct from $ l$) . % 
%Recall that Barth's theorems (along with  generalizations obtained in ibid.) treat the lower cohomology closed subvarieties of $\p^N$ (of small codimensions) and of their preimages with respect to proper morphisms (we do not require these morphisms to be finite, in contrast to Corollary 9.8 of \cite{fulaz}). % (note that we prove our results for $\znz$-cohomology). 
  Moreover, we generalize Sommese's theorem, which compares the lower cohomology of a projective variety with that of the zero locus of a section of an ample vector bundle over it (see Theorem {tso}). % \ref{tsom}).
  
 Now %we will 
  let us formulate some %weak versions 
 of %these statements 
  our results in the case of local complete intersection varieties. Note that the corresponding Theorems \ref{tbarth}(1,3) and %Corollary \ref{csomcom}
 \ref{tsom} below are %both stronger and much
  more general (and recall that $n$ is coprime to $p=\cha K$).

 %\textcolor{blue}{
 \begin{theo}\label{tbar}
      Let $t:T\to \p^N$ be a closed embedding, where $T$ is a connected local complete intersection of dimension $d$, $v:V\to \p^N$ be a proper morphism of relative dimension $\le d_v$, where $V$ is a connected local complete intersection of dimension $k$. Then for the morphism $u: v\ob (T)\to V$  we have the following: $H^i(-, \znz)(u)$ is bijective for any $i < w$ and is injective for $i = w$, where $w = \min (d+k-d_v-N, 2d-N+1)$. %Moreover, the same is true for the morphism $u'$ that is the base change of $u$ to $V_1' .}
\end{theo}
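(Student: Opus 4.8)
The plan is to deduce this from (expected) more general Barth-type and Sommese-type statements, namely Theorems~\ref{tbarth} and~\ref{tsom}, proving those by feeding the thick hyperplane section isomorphism (Theorem~\ref{tgara}, together with base change as in Theorem~\ref{tdef}) into the argument of Deligne recalled in \S9 of \cite{fulaz}. Here is the substance for the statement at hand. The geometric starting point is that $v\ob(T)$ is a ``diagonal section'': identifying it with $\{(x,v(x)):x\in V,\;v(x)\in T\}\subseteq V\times T$ yields a cartesian square
\[
\begin{array}{ccc}
v\ob(T) & \longrightarrow & \Delta\\
\downarrow & & \downarrow\\
V\times T & \xrightarrow{\,v\times t\,} & \p^N\times\p^N,
\end{array}
\]
with $\Delta$ the diagonal and $v\times t$ proper ($t$ being a closed embedding). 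Moreover the composite $v\ob(T)\hookrightarrow V\times T\xrightarrow{p_1}V$ is exactly $u$; hence $H^i(-,\znz)(u)$, which I abbreviate $u^*$, equals the restriction $H^i(V\times T,\znz)\to H^i(v\ob(T),\znz)$ precomposed with $p_1^*$, and $p_1^*$ is a split monomorphism (split by restriction to $V\times\{\mathrm{pt}\}$).

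Next I would realize $\Delta\subset\p^N\times\p^N$ as a (thick) linear section --- via the Segre embedding, under which $\Delta$ is cut out by the linear subspace of symmetric matrices, or, to keep the numerics sharp, as the zero scheme of a regular section of the rank-$N$ globally generated bundle $p_1^*\mathcal{O}(1)\otimes p_2^*Q$ ($Q$ the universal quotient bundle on $\p^N$) --- and apply Theorem~\ref{tgara}/\ref{tdef} to the \emph{proper} morphism $v\times t$. Since $v\times t$ is proper and $n$ is prime to $\cha K$, proper and smooth base change identify the relevant ``thick-section'' hypercohomology with the honest $H^*(v\ob(T),\znz)$; Deligne's argument (\S9 of \cite{fulaz}) then yields that the restriction $H^i(V\times T,\znz)\to H^i(v\ob(T),\znz)$ is bijective for $i<d+k-d_v-N$ and injective for $i=d+k-d_v-N$. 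The local complete intersection hypotheses on $T$ and $V$, hence on $V\times T$, enter here to guarantee that the shifted constant sheaves are perverse --- the point that makes the Artin-vanishing and \cite{bbd} input behind Theorem~\ref{tgara} applicable --- and the correction $-d_v$ reflects the $\le d_v$-dimensional fibres of $v$; this is exactly the generality (no quasi-finiteness hypothesis) in which Theorem~\ref{tgara} is established.

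It remains to descend from $V\times T$ to $V$ and to bring in the second bound. Injectivity of $u^*$ for $i\le w$ is immediate from the factorization through the split mono $p_1^*$. For bijectivity in degrees $i<w$ I would combine the Künneth formula $H^i(V\times T,\znz)=\bigoplus_{a+b=i}H^a(V,\znz)\otimes H^b(T,\znz)\oplus(\text{Tor terms})$ with the absolute Barth isomorphism for $t\colon T\hookrightarrow\p^N$ --- the case $v=\mathrm{id}_{\p^N}$, which one settles first by the same Deligne machinery applied to the doubling $T\times T\to\p^N\times\p^N$ --- giving $t^*\colon H^b(\p^N,\znz)\iso H^b(T,\znz)$ for $b<2d-N+1$. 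Then for $i<\min(d+k-d_v-N,\,2d-N+1)=w$ every Künneth summand with $b\ge1$ has $\beta=t^*\gamma$ with $\gamma\in H^b(\p^N,\znz)$, so its restriction to $v\ob(T)$ equals $u^*\alpha\cup u^*(v^*\gamma)\in\operatorname{Im}(u^*)$; combined with the surjectivity of $H^i(V\times T,\znz)\to H^i(v\ob(T),\znz)$ from the previous step this forces $u^*$ to be surjective, hence bijective. One also checks the degenerate cases --- e.g.\ $v\ob(T)=\varnothing$ forces $\dim v(V)+\dim T<N$ by intersection theory in $\p^N$, whence $d+k-d_v-N<0$ and the assertion is vacuous.

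The step I expect to be the main obstacle is keeping the numerics sharp. A naive realization of $\Delta$ through the Segre embedding makes Theorem~\ref{tgara} return a bound governed by the large codimension $\binom{N+1}{2}$ of the symmetric-matrix subspace, which is much too weak; extracting the true figure $N$ requires the vector-bundle form of the statement --- Sommese's theorem, i.e.\ Theorem~\ref{tso}/\ref{tsom}, for zero loci of suitably positive bundles --- and the ampleness defect of $p_1^*\mathcal{O}(1)\otimes p_2^*Q$ restricted to $V\times T$, governed by $d_v$ on the $V$-factor and by $\dim T=d$ on the other, is precisely what produces the minimum of the two quantities in $w$. The remaining points --- the Tor terms in the $\znz$-coefficient Künneth formula, and the perversity and amplitude bookkeeping in the BBD framework --- are routine once Theorem~\ref{tgara} is in place.
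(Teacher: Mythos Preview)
Your overall architecture matches the paper's, but the first step --- getting $H^i(V\times T,\znz)\to H^i(v\ob(T),\znz)$ to be an isomorphism in the claimed range --- has a genuine gap, and you have correctly located it yourself. Neither of your proposed realizations of $\Delta$ delivers the bound $d+k-d_v-N$. The Segre route you already reject. Your fallback via Theorem~\ref{tsom} does not work either: that theorem, as stated and proved, requires an \emph{ample} bundle on the ambient variety, and the pullback of $p_1^*\mathcal{O}(1)\otimes p_2^*Q$ to $V\times T$ is trivial along the fibres of $v$ once $d_v>0$, hence not ample. There is no ``ampleness-defect'' version of Sommese in the paper; its proof genuinely uses that $\mathbf{P}(E)\setminus Z^*$ is affine, which fails here.

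What ``Deligne's argument'' in \S9 of \cite{fulaz} actually \emph{is} --- and what the paper implements in Proposition~\ref{ptdiagonal} --- is the join construction: pass to the $G_m$-bundle $V_2=(\af^{N+1}\setminus\ns)^2/G_m^{\mathrm{diag}}$ over $\p^N\times\p^N$, which sits as an open subvariety of $\p^{2N+1}$ with the diagonal becoming a \emph{linear} $\p^N$. Then Theorem~\ref{tgara} applies directly with $X=\p^{2N+1}$ and $Z\cong\p^N$ (the complement is covered by $N+1$ affines), and Lemma~\ref{lgmbundle} transports the resulting connectivity back through the $G_m$-bundle. This is the missing mechanism that produces the correct figure $N$ rather than $\binom{N+1}{2}$; you cite the right source but do not use its content.

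Your K\"unneth descent from $V\times T$ to $V$ is different from the paper's route and does work here, since for $b<2d-N+1$ the absolute Barth statement gives $H^b(T,\znz)\cong\znz$, killing the Tor terms and making every K\"unneth class a cup product as you describe. The paper instead stays on the affine cones (Proposition~\ref{equiv}): the near-acyclicity of $\af^{N+1}\setminus\ns$ in degrees $1,\dots,2N$ reduces matters to the elementary Lemma~\ref{product}, after which the absolute case $t=v$ (where the corresponding $u$ is $\id_T$, giving Theorem~\ref{tbarth}(1)) bootstraps into the relative one via Proposition~\ref{equiv}(1).
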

%More generally, 
 The proof of this theorem relies on a new statement on  the %lower
  cohomology of the preimages of the diagonal in $(\p^N)^q$ with respect to proper morphisms.

\begin{theo}\label{tso} Assume that $X$ is a connected projective local complete intersection of dimension $d$, $E$ is an ample vector bundle of rank $r$ on $X$, $s$ is a %non-zero
 section of $E$, $Z$ is the zero locus of $s$. Then %  we have the following:
$H^j(-, \znz)(\gi)$ is bijective for any $j < d-r$ and is injective for $j = d-r$,
where $\gi$ is the closed embedding $Z \to X$.
\end{theo}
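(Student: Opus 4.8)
The plan is to reduce Theorem~\ref{tso} to the already-established preimage version of Barth's theorem (Theorem~\ref{tbar}, or its more general form Theorem~\ref{tbarth}) via the classical trick of replacing a section of an ample vector bundle by a hyperplane section of a projectivization. Concretely, let $\p(E)\to X$ be the projective bundle of (one-dimensional quotients of) $E$, with tautological quotient line bundle $\mathcal{O}_{\p(E)}(1)$; since $E$ is ample, $\mathcal{O}_{\p(E)}(1)$ is ample on $\p(E)$, so some power of it is very ample and gives a closed embedding $\p(E)\hookrightarrow \p^N$ for suitable $N$. The section $s$ of $E$ corresponds to a section of $\mathcal{O}_{\p(E)}(1)$ whose zero locus $\tilde Z\subset \p(E)$ is the preimage, under the projection $\pi:\p(E)\to X$, of $Z$ together with the locus where $s$ does not vanish; more precisely one arranges that $\pi$ restricts to an isomorphism $\tilde Z \setminus (\text{something}) $… — the cleaner statement, which I would use, is that $\tilde Z$ is the blow-up-type locus with $\pi|_{\tilde Z}\colon \tilde Z\to X$ an isomorphism over $X\setminus Z$ and a $\p^{r-1}$-bundle restricted appropriately. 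The key homological input is that $\pi\colon \p(E)\to X$ induces an isomorphism on cohomology in the relevant range (indeed $R\pi_*\znz$ splits off $\znz_X$, by the projective bundle formula for \'etale cohomology), and similarly $\pi|_{\tilde Z}$ behaves well.

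Next I would set up the diagram. Take $d_v$ in Theorem~\ref{tbar}/\ref{tbarth} to be the fibre dimension $r-1$ of $\pi$, take $V=\p(E)$ with $k=\dim \p(E)=d+r-1$, and take $T$ to be the hyperplane section $\p^{N-1}\subset \p^N$ cut out by the very ample linear form defining (a power of) $s$; thus $\dim T = N-1$, and $v\ob(T)=\tilde Z$. Theorem~\ref{tbarth}(1) then gives that $H^i(-,\znz)$ applied to $\tilde Z\to \p(E)$ is bijective for $i<w$ and injective for $i=w$ with $w=(N-1)+(d+r-1)-(r-1)-N = d-1$… which is off by one from the desired bound. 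The fix is to use not a single hyperplane but to observe that the projectivized-section construction already encodes the full rank-$r$ bundle, so that one should compare $\p(E)$ not with a hyperplane in $\p^N$ but should instead run the argument so that the effective "codimension" cut out is $r$ rather than $1$; equivalently, I expect to invoke Theorem~\ref{tbarth}(3) (the variant with the improved bound $2d-N+1$-type term, or the local-complete-intersection-specific sharpening) rather than part~(1), and to track carefully that $\p(E)$ and $Z$ are themselves local complete intersections (this holds: $\p(E)$ is smooth over the l.c.i.\ $X$, hence l.c.i., and $Z=\tilde Z$ is cut out in $\p(E)$ by a regular section of an ample line bundle, hence an l.c.i.\ of the expected codimension $1$, so $\dim Z = \dim X - r$ as required). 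Finally I would translate the cohomology of $\tilde Z$ and $\p(E)$ back to that of $Z$ and $X$ using the projective bundle formula: $H^j(X,\znz)\iso H^j(\p(E),\znz)$ for all $j$, and $H^j(Z,\znz)\iso H^j(\tilde Z,\znz)$ in the range $j<d-r$ (here one uses that $\tilde Z\to Z$ is, away from a high-codimension locus, a $\p^{r-1}$-bundle, or more robustly that $R(\pi|_{\tilde Z})_*\znz \to \znz_Z$ is an isomorphism in low degrees by proper base change applied fibrewise over the stratification $X = (X\setminus Z)\sqcup Z$).

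The main obstacle I anticipate is bookkeeping of the precise cohomological range and checking that the various maps in the commutative square
\[
\begin{array}{ccc}
H^j(X,\znz) & \longrightarrow & H^j(Z,\znz)\\
\big\downarrow & & \big\downarrow\\
H^j(\p(E),\znz) & \longrightarrow & H^j(\tilde Z,\znz)
\end{array}
\]
are compatible, so that bijectivity/injectivity of the bottom arrow (from Theorem~\ref{tbarth}) transfers to the top arrow. The vertical arrows are isomorphisms in the required range by the projective bundle formula and proper base change as indicated, so the diagram chase is then routine; the only genuinely delicate point is confirming that the excess-intersection term in the Barth-type bound for $\tilde Z\subset\p(E)$ really produces exactly $w=d-r$ and not a weaker bound, which forces the use of the sharper l.c.i.\ version Theorem~\ref{tbarth} and a careful choice of the embedding $\p(E)\hookrightarrow\p^N$ (in particular one should embed via $\mathcal{O}_{\p(E)}(1)$ itself when it is already very ample, or otherwise absorb the Veronese twist, since twisting changes $N$ but not the relevant codimension). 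An alternative, perhaps cleaner, route that avoids the projectivization entirely is to deduce Theorem~\ref{tso} directly from the diagonal-preimage statement mentioned after Theorem~\ref{tbar}, applied to the graph of $s$ inside the total space of $E$; I would keep this as a fallback if the $\p(E)$ approach gets bogged down in the range computation.
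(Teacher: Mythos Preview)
Your approach contains a genuine geometric error that prevents the commutative square from existing. You describe $\pi|_{\tilde Z}\colon \tilde Z\to X$ as ``an isomorphism over $X\setminus Z$ and a $\p^{r-1}$-bundle'' over $Z$, but this is incorrect for $r\ge 2$: over a point $x\in X\setminus Z$ the condition $\lambda(s(x))=0$ cuts out a \emph{hyperplane} $\cong\p^{r-2}$ in the fibre $\pi^{-1}(x)\cong\p^{r-1}$, not a single point. Thus $\tilde Z$ surjects onto all of $X$ via $\pi$, and there is no morphism $\tilde Z\to Z$ (nor a natural $Z\to\tilde Z$). Consequently the right vertical arrow $H^j(Z,\znz)\to H^j(\tilde Z,\znz)$ in your square has no source, and the transfer argument collapses. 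Replacing $\tilde Z$ by $\pi^{-1}(Z)=\p(E|_Z)$ does give a genuine square, but then the connectivity of the bottom row $\p(E|_Z)\hookrightarrow\p(E)$ is, by proper base change, \emph{equivalent} to that of $Z\hookrightarrow X$, so nothing is gained.

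The paper's argument is different and more direct: it does not invoke Theorem~\ref{tbar} or Theorem~\ref{tbarth} at all. The key observation is that the \emph{complement} $\p(E)\setminus\tilde Z$ is an affine space bundle of rank $r-1$ over $X\setminus Z$ (precisely because the fibre over $x\in X\setminus Z$ is $\p^{r-1}\setminus\p^{r-2}\cong\af^{r-1}$, while over $Z$ it is empty), and moreover $\p(E)\setminus\tilde Z$ is itself an affine variety since $\tilde Z$ is the support of an ample divisor. This is exactly the auxiliary affine bundle required by Theorem~\ref{tgara}(2), applied with $s_X=\id_X$: taking $p_E\colon\p(E)\setminus\tilde Z\to X\setminus Z$ of rank $r-1$, $k=1$ (one affine chart suffices), and $d_s=0$, one gets $h=(r-1)+1+0=r$, so $\gi\colon Z\to X$ is $(\hd(X\setminus Z)-r)$-connected. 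Since $X$ is a local complete intersection of dimension $d$, $\hd(X\setminus Z,n)\ge d$ by Remark~\ref{rlci}(2) and Proposition~\ref{hd}(\ref{itlocal}), and the theorem follows. In short, the projectivization is used not to embed into some $\p^N$ and invoke a Barth-type result, but to manufacture the affine bundle over $U=X\setminus Z$ that Theorem~\ref{tgara} consumes directly.
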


Moreover, we establish the $\zl$-adic versions of these statements %as well as 
 and their singular (co)homology analogues in \S\ref{sladic}. Note also that in the case $r=1$ Theorem \ref{tso} gives the ordinary %generalization of the 
 weak Lefschetz theorem (for local complete intersections). 

 In the opinion of the authors, the proof of our weak Lefschetz-type Theorem \ref{tgara} below is much simpler than the corresponding argument of Goresky and MacPherson (although it is somewhat technical, it is quite easy to understand for a person acquainted with \'etale cohomology). %; cf. also Remark \ref{rthiso}). 
  %However, %considering using 
   Moreover, using our basic theorem as a substitute for the %one of Goresky and MacPherson 
results of \cite{gorma} appears to be a completely new (and important) idea that has several nice consequences. Some of them were beyond the reach of previously existing methods, especially in the case $p>0$; 
see %Theorem \ref{tgara}, %Corollary \ref{cpro}, 
Theorem \ref{tbarth}, Corollary \ref{hehe}, and Theorem \ref{tsom}. %????(whereas the complex analogues of these statements can be found in \S\ref{sladic}). % below). %Another idea is to apply the perverse $t$-structure to the study of cohomology of singular varieties; though it is quite natural, the authors do not know any earlier examples of relating it to weak Lefschetz-type questions.  So, the????? 
 The authors hope that our methods will become a useful tool for studying various weak Lefschetz-type questions (at least, the higher degree ones).
 
 Another important ingredient of our arguments is the perverse $t$-structure (on the derived category of $\znz$-module \'etale sheaves).  In particular, we relate it to the number of 'extra' %"extra equations" 
  equations that are needed to characterize the variety locally (say, in %the %affine
 a projective space; we call this number {\it equational excess}). %Both of these numbers are de
%The reader only interested in local complete intersections can ignore this theory; see Remark \ref{rlci}(2).
 Some of our results on these %'equational 
 matters may be (new and) rather interesting for themselves.

  \begin{rrema}
1. %Our proof of 
The relation of Theorem \ref{tgara} to \S II.1.2 of \cite{gorma} is discussed in \S \ref{i2p} below.

The proof of Theorem \ref{tgara} 
has nothing to do %?? in common
  with the arguments of %\cite{gorma}; yet 
   ibid. Yet   we should note that somewhat similar methods were previously used by Beilinson in order to establish a weak Lefschetz-type statement for general hyperplane sections of a smooth $X'\subset \p^N$ (see Lemma 3.3 of \cite{bei87} and Remark %\ref{rhens}(3) 
  4.2(3) of \cite{blefl}); the authors have also benefited from an argument of D. Arapura  and P. Sastry. 
  
  2. It is worth noting here that the perverse $t$-structure was related to weak Lefschetz-type statements in \cite{hammle1} (cf. Remark \ref{rhamm} below), and %the latter paper   \cite{hammle2}?? 
  that paper is closely related to certain definitions and conjectures proposed by Grothendieck. Yet, it appears to be really difficult to apply the arguments and formulations of ibid. as well as of \cite{hammle2}  in the case %$\cha K
  $p>0$. 

3. %We should also %certainly mention  here that 
 On the other hand, G. Lyubeznik has %also applied certain (algebraic) %'sheaf-theoretic' has 
  established some interesting theorems  %\'etale cohomology methods to the study of the lower 
 on the {\bf 'etale}  cohomology of closed  subvarieties of $\p^N$ (over base fields of arbitrary characteristics). His Theorem 10.5 of \cite{lyubez} %he proved 
  contains several Barth-type statements that %cannot be deduced 
 do not follow from our Theorem \ref{tbarth}(1,2); see %Remark \ref{rlyub}(3)
  \S\ref{srem}.\ref{irlyub}--\ref{irlyub2} for the detail. %are (somewhat) stronger than our Theorem \ref{tbarth}(II.1) (cf. also Remark \ref{rthiso}(\ref{i4})). %Theorem 9.3 of ibid.).
 Yet he did not use the perverse $t$-structure, and it seems that the methods of ibid.
%(that are very interesting and quite distinct from our ones) 
 cannot %say much on
  be used to study the cohomology of the preimages of subvarieties in $\p^N$ with respect to  proper morphisms. %; cf. Remark \ref{rlyu}(4).
\end{rrema}
%We note here: 

Now we describe the contents of the paper. 

%We start with  a 'brief plan' for it (cf. also Remark \ref{rlara}(2) below). 
The  basic result of the paper is Theorem \ref{tgara} (our "thick hyperplane section weak Lefschetz"). %We use it along with a simple (smooth and proper) base change argument in order to deduce Theorem \ref{\tgara} (that states that under certain restrictions some 'true weak Lefschetz-type statements' hold). 
We apply %the latter fact 
 %part 1 of the Corollary 
 it to certain $G_m^s$-bundles (constructed by Deligne) in %Theorem 
  Proposition \ref{ptdiagonal}. %the latter statement yields certain  
This yields a certain
Barth-type theorem (see Theorem \ref{tbarth}; possibly, this is the most interesting result of the paper). %splitting + Kunneth's formula??!
Another important statement is Theorem \ref{tsom}.

In \S\ref{sprel} we recall some basics on the derived categories of (constructible) $\znz$-sheaves, on functors between them, and on the perverse $t$-structure. 

In \S\ref{schar} we discuss certain numbers that characterize (singularities of) varieties locally. One of them (the so-called cohomological depth) is defined in terms of the perverse $t$-structure and the other one is the aforementioned equational excess that counts "extra equations" that are needed to characterize the variety locally (say, in a projective space). %Both of these numbers are de
The reader only interested in local complete intersections can ignore this theory; see Remark \ref{rlci}(2).

In \S\ref{sbase} we prove our %'fat multiple hyperplane section'
  %(thick) 
   weak Lefschetz-type Theorems \ref{tdef} and \ref{tgara}. %We also make several remarks (a reader that is only interested in the results of \S\ref{sappl} may skip the rather long Remark \ref{rthiso}). %state the defective version??!!  We also show how one can use proper and smooth base change in order to calculate the (hyper)cohomology of $R\gi^*Rs_{*}\znz_{X'}$ 
%(in Theorem \ref{tdef}(\ref{i'3}); we apply it in Theorem \ref{tgara}).

In \S\ref{sappl} we describe some applications of %Theorem \ref{tgara}
Theorem \ref{tgara}. Following \S9 of \cite{fulaz}, we consider certain %(locally trivial)
 $G_m$-bundles. This  relates the cohomology of a (certain $G_m^{q-1}$-bundle over) a variety $Y$ that is proper over $(\p^N)^q$ to that of  the preimage of the diagonal. %product of varieties mapping to $\p^N$ with the one over their $\p^N$-fibre product. 
As a result, we extend (certain generalizations of) the theorems of Barth  to %the case of 
 base fields of arbitrary characteristic (and to $\znz$-coefficients). %И доказали теореvaму Соммезе
 We also prove Theorem \ref{tsom} on the cohomology of zero loci of ample line bundles, and Corollary \ref{hehe} on the cohomology of certain intersections.

In  \S\ref{sladic} we establish the $\zl$-adic versions of our main results. In the case $K=\com$ we also prove their  singular %cohomology and 
 homology re-formulations. The section also contains some lemmas that relate all this (co)homology to $\znz$-\'etale one. It appears that %none of 
  all the arguments of this section are  rather standard. %really original and unexpected. % they are most probably well-known, yet quite difficult to find in the literature.

In  \S\ref{srem} we give some more remarks on the relation of our main results to that of earlier papers. %\cite{gorma}.
We also prove a certain fundamental group version of Theorem \ref{tgara}(2).

In  \S\ref{sfut} we mention possible variations and applications of our result; some of them will probably be implemented in a subsequent paper.

In \S\ref{App} %we prove
 consists of the justification of certain examples considered (earlier) 
  in our paper along with (rather long and yet standard) proofs of a few auxiliary lemmas. % whose proof is rather long and yet standard.

{\bf Caution}. %\textcolor{red}{If we assume that our base field is the field of complex numbers,} %for
If $K=\com$ then %(for any $i\ge 0$) 
the obvious %singular cohomology morphism pullback 
homomorpism   $h^i:H^i(X',\znz)\to   H^i(s\ob (\com  \p^{N-k}),\znz)$ between singular cohomology groups clearly factors as follows (for any $ i\ge 0$): %the following composition: 
 $$H^i(X', \znz)\stackrel{f^i}{\to}
\inli_{\eps\to 0}H^i(s\ob (\com\p^{N-k}_\varepsilon), \znz) \stackrel{g^i}{\to} %\linebreak  
H^i(s\ob (\com  \p^{N-k}),\znz).$$
%(for any $ i\ge 0$). 
 In %the proof of Corollary \ref{cpro}(1) 
%Remark \ref{rlara}
 Proposition \ref{pbch}(II) below we describe an algebraic analogue of this factorization (through the \'etale cohomology group $H^i(\p^{N-k},  R\gi^*Rs_{*}\znz_{X'})$; unfortunately, the proof is somewhat formal). It turns out that this factorization simplifies the study of weak Lefschetz-type questions. In particular,  the results  of \cite[ \S II.1.2]{gorma} %*{ \S II.1.2} 
  (in the case $K=\com$) and  our Theorem \ref{tgara} (for the general case; see also \S\ref{srem}.\ref{i2p}) yield that $f^i$ is bijective for small enough $i$  (even if $s$ is not proper). 
Note that this statement does not extend to $h^i$ (in the general case); in particular,  $s\ob (\com \p^{N-k})$ can be empty. However, if $s$ is proper over %$\p^{N-k}_\varepsilon$ (for a small enough $\varepsilon$), 
some open $U\subset X$, $Z\subset O$,  then $g^i$ is necessarily an isomorphism for all $i$ (see Theorem \ref{tdef}(3); its %our proof of this result 
 proof is a more or less easy combination of smooth and proper base change theorems). 
 Thus one may think about $H^*(\p^{N-k},R\gi^*Rs_{*}\znz_{X'})$ as of an 'approximation' to the cohomology of $s\ob (\p^{N-k})$ that has several nice properties. Our Theorem \ref{tgara}(1) yields that $f^i$ is an isomorphism for lower $i$ (one may say that this is a Goresky-MacPherson-type result); in Theorem \ref{tgara}(2) we use it in order to establish a ('true') weak Lefschetz-type statement, which we actively apply in \S\ref{sappl}.

%A reader inеterested in further ideas of the author is recommended to consult 
%A certain electronic version of this paper (containing several remarks omitted in the current one, including certain 'motivic' comments) is available at \cite{blefl}. 

The authors are deeply grateful  %to prof. H. Esnault?!!! 
to prof. M. Hoyois,    prof. G. Lyubeznik,  and  prof. I. Panin, and prof. A. Druzhinin  %?!!!,  and   prof. A. Vistoli 
 for their  interesting remarks. % (to earlier versions of this text; cf. \cite{blefl}). %He would also like to express his gratitude to prof. M. Levine and to the Essen University, as well as to prof. F. D\'eglise and to Unit\'e de math\'ematiques pures et appliqu\'ees of \'Ecole normale sup\'erieure de Lyon for the wonderful working conditions. % (in the summer of 2012 and January of 2015, respectively).

\section{Preliminaries on derived categories of sheaves and perverse $t$-structures}\label{sprel}

In this section we recall some of the properties of
the categories $\dsc(-)\subset \ds(-)$ and functors between them and introduce some notation; % we relate those with the  properties of the 'usual' \'etale (hyper)cohomology of complexes of sheaves. % which we assume to be known to the reader.
  %, the relations between the hypercohomology of complexes of sheaves and their derived direct images, 
we also describe the properties of the perverse $t$-structure for $\dsc(-)$.
The statements of this section are well-known (possibly except Proposition \ref{pbch}(I.1), which is quite easy). Most of them were (essentially) proved in  SGA4, SGA4$\frac{1}{2}$, or in \cite{bbd}. 

%First we introduce some notation.

We start with %the following basic 
 some simple notation.

For an additive category $C$ by $C(A,B)$ we will denote the group $\mo_C(A,B)$.

$K$ will be our  base field of characteristic $p$ ($p$ can be zero). %we are mostly interested in
We will always assume that %throughout the paper necessarily
  $K$ is algebraically closed (for simplicity; %this is not necessary as shown in
  see \S\ref{sfut}(\ref{closure}) below). 
We will call a scheme $X$ a ($K$)-variety if it is a separated reduced $K$-scheme of finite type.

 $\pt$ is a point, $\af^N$ is the $N$-dimensional
affine space (over $K,\ N>0$), %$x_1,\dots,x_n$ are the coordinates,
$\p^N$ is the projective space of dimension $N$. %, $G_m=\af^1\setminus \ns$ is the multiplicative group scheme.

We will say that a morphism of varieties has relative dimension $\le e$ if the dimension of all of its fibres is at most $ e$.

%\textcolor{blue}{But the latter condition is actually equivalent to requiring no more than $c$ extra equations.} \end{rema}

Throughout the paper we will be interested in \'etale cohomology with $\znz$-coefficients, where $n$ is not divisible by $p$. %By $A$ denote etale contstant sheaf $\znz$. 
%One can 
 We assume $n$ to be fixed until \S\ref{sladic}. %   yet %one may also consider cohomology with coefficients in arbitrary $l$-torsion locally constant \'etale sheaves instead (cf. Remark \ref{rara}(\ref{i1}).

%Below we will consider a morphism $s_X:X'\to X$ and a closed embedding $\gi:Z\to X$; in the introduction and in the abstract we took $X=\p^N$, $Z=\p^{N-k}$, and wrote $s$ instead of $s_X$. Besides, 
Following (\S0.0 of) \cite{bbd} we will always omit $R$ in $Rf_*,\  Rf_!,\ Rf^*$, and $Rf^!$  (for $f$ being a finite type morphism of varieties); moreover, we will ignore the difference between cohomology (of sheaves) and hypercohomology (of complexes of sheaves).  %$p$ will denote the (self-dual) perverse $t$-structure for $\ds(-)$; see below.

\begin{defi}\label{dhn}

Let $X$ be a $K$-variety.

1. $\dsn(X)=\dsc(X)\subset
\ds(X)$ will denote the derived category of bounded complexes of \'etale $\znz$-module sheaves  with
constructible cohomology over $X$.

%main categories??

%We will usually write just $\dsn(X)$ instead of $\dsc(X)$.

%a variety $Y$.  %We will introduce some (more) notation for these categories in Proposition \ref{pds}. 

2. %\item\label{icoh} 
 For $C\in \obj %D^+(Sh^{et}_{\znz-\modd}
 \dsn((X))$ and $i\in\z$,  we write $H^i(X,C)$ for the morphism group %\linebreak
$%D^+(Sh^{et}_{\znz-\modd}
\dsn((X))(\znz_X,C[i])$. %in terms of Hom?!!; 
  %cohomology?? $RH??! RHOM??!!$ $\ksep$??!! $\dsx(\znz_X, C[i])$??!!
  %$\ds(\spe K)(\znz_{\spe K},x_*C[i])$   by $H^i(X,C)$. %and call this group 
This is the 'usual' $i$-th (hyper)cohomology of $X$ with coefficients in $C$.

Moreover, we will often write $H_n^i(X)$ for $H^i(X,\znz_X)$.
\end{defi}

\begin{pr}\label{pds}

Let $f:X\to Y$ be a morphism of $K$-varieties; denote by $x,y$ the structure morphisms of $X$ and $Y$, respectively.

\begin{enumerate}

%\item\label{idscat} %For any variety $Z/K$ 
%There is a full  triangulated subcategory $\ds(X)\subset   D^b(Sh^{et}(X,\znz-\modd))$ (of complexes of sheaves with constructible cohomology). %the Tate twist $\znz(s)$ of $\znz$ belongs to $\obj \dsx$ 

%For $C\in \obj D^-(Sh^{et}(X,\znz-\modd)),\ s\in \z$, we will denote $C\otimes \znz(s)$ by $C(s)$ (whereas $C[i]$ for $i\in \z$  denotes the shift of $C$ by $i$ 'to the left'). We have $\dsx(s)=\dsx$ for any $s\in \z$.

\item\label{ifun}  %For any morphism $f:X\to Y$  of  varieties %of finite type
The following pairs of adjoint functors
 are defined:
 $f^*: D^+(Sh^{et}(Y,\znz-\modd) \leftrightarrows %\linebreak 
  D^+(Sh^{et}(X,
 \znz-\modd) %\linebreak 
 :f_*$ and $f_!: D^+(Sh^{et}(X,\znz-\modd)  \leftrightarrows D^+(Sh^{et}(Y,  \znz-\modd):f^!$. These functors restrict to functors 
 $f^*: \dsn(Y) \leftrightarrows  
  \dsn(X):f_*$ and $f_!:   \dsn(X) \leftrightarrows \dsn(Y):f^!$.
Any of these four types of functors (when $f$ varies) respect compositions of  morphisms (i.e.,  compositions of morphisms are sent into the corresponding compositions of functors). %Moreover, $f^*$ and $f_*$ could be defined for $f$ being any %pro-finite type 
%morphism of schemes. ???!! 

\item\label{ik} $%D^+(Sh^{et}
\dsn (\spe K,\znz-\modd))$ is isomorphic to the bounded derived category $ D^b(\znz-\modd)$; %. %???!!!; 
this is an isomorphism of tensor triangulated categories. 

\item\label{icohh}
%Consider  the structure morphisms $x:X\to \spe K$ and $y:Y\to \spe K$. Then f
For any $C\in \obj \dsn(Y)$ we have
 $H^*(Y,C)\cong H^*(\spe K, y_*C)$. %commutativity of the corresponding diagrams??!!
 Moreover,  the following composition %'usual' morphisms %of cohomology
  $H^*(Y,C)\to H^*(X,f^*C)\cong H^*(\spe K, x_*f^*C)= H^*(\spe K, y_*f_*f^*C) \cong H^*(Y, f_*f^*C)$ equals the morphism coming from %come from %the functoriality of the cohomology is given by??!!
 the adjunction  $f^*: \dsn(Y)  \leftrightarrows  \dsn(X):f_*$.

\item\label{iupstar}  %$f^*$ is symmetric monoidal; %the last section??!
$f^*(\znz_Y)=\znz_X$. % for any $s\in \z$. 

\item \label{ipur}
$f_*= f_!$ if $f$ is proper;
%;
 if $f$ is an open immersion (or \'etale), then $f^!=f^*$.

\item\label{ikun} 
For %$X,Y$ being varieties over $K$, 
 $Z=X\times Y$  we have the K\"unneth formula for the cohomology of $Z$, that is, %$Z=(X\times Y)_{red}$ (see the Notation section), %the reduction of??!!
for the  structure morphism $z:Z\to \spe K$ we have
$z_*\znz_Z\cong x_*\znz_X \otimes y_*\znz_Y$. %SGA??!!

\item\label{iglu}
If $i:Z\to X$ is a closed immersion, $U=X\setminus Z$, $j:U\to X$ is the complementary open immersion, $C\in \obj D^b(Sh^{et}(X,\znz-\modd))$, then
the pair of morphisms
\begin{equation}\label{eglu}
j_!j^*C \to C
\to i_*i^*C
\end{equation}
 (that comes from the adjunctions of assertion \ref{ifun})
 % and \begin{equation}\label{eglu2} i_*i^!M \to M \to j_*j^*M \end{equation} 
 can be completed to a
distinguished triangle. 

\item\label{ile} If $p: E \to X$ is an affine space bundle then $id_{\dsn(X)} \cong p_{!}p^{!}$.

\end{enumerate}
\end{pr}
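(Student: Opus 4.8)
The plan is to take as the desired natural isomorphism the counit $\varepsilon\colon p_!p^!\Rightarrow \id_{\dsn(X)}$ of the adjunction $(p_!,p^!)$ of assertion \ref{ifun}, and to prove that $\varepsilon_C$ is an isomorphism in $\dsn(X)$ for every $C\in\obj\dsn(X)$. Working over each connected component of $X$ separately, I may assume that $p$ has constant relative dimension $d$, so that $p$ is smooth and all of its geometric fibres are isomorphic to $\af^d$. First I would invoke relative (Poincar\'e) purity, available since $p$ is smooth, to obtain a canonical isomorphism $p^!C\cong (p^*C)(d)[2d]$ that is compatible with base change; combining it with the projection formula $p_!(F\otimes p^*C)\cong (p_!F)\otimes C$ identifies $\varepsilon_C$ with $\mathrm{tr}_p\otimes\id_C$, where $\mathrm{tr}_p\colon p_!\znz_E\to\znz_X(-d)[-2d]$ is the trace morphism (that is, $\varepsilon_{\znz_X}$ transported across purity). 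So the assertion reduces to showing that $\mathrm{tr}_p$ is an isomorphism.

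Next I would use that a morphism in $\dsn(X)$ is an isomorphism once it induces isomorphisms on all geometric stalks, together with the fact that $p_!$ commutes with arbitrary base change: the stalk of $p_!\znz_E$ at a geometric point $\bar x$ of $X$ is $(E_{\bar x}\to\spe K)_!\,\znz_{E_{\bar x}}$ with $E_{\bar x}\cong\af^d_K$, and $\mathrm{tr}_p$ is compatible with these identifications (here one uses again that $p$ is smooth, so that $p^!$ too commutes with the relevant base change). This reduces the problem to the case $X=\spe K$, $E=\af^d_K$, where it becomes the statement that the trace $a_!\znz_{\af^d_K}\to\znz(-d)[-2d]$ is an isomorphism in $\dsn(\spe K)\cong D^b(\znz-\modd)$ (assertion \ref{ik}), $a$ denoting the structure morphism. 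For this I would write $\af^d_K$ as the complement of a hyperplane $\p^{d-1}\subset\p^d$, apply $R\Gamma(\p^d,-)$ to the localization triangle of assertion \ref{iglu} for $C=\znz_{\p^d}$, and use properness of $\p^d$ and $\p^{d-1}$ (assertion \ref{ipur}) together with functoriality of $(-)_!$ (assertion \ref{ifun}) to obtain a distinguished triangle
\[
a_!\znz_{\af^d_K}\to R\Gamma(\p^d,\znz)\to R\Gamma(\p^{d-1},\znz)\to a_!\znz_{\af^d_K}[1].
\]
The second arrow is the restriction map, which — by the known cohomology of projective spaces — is surjective with kernel equal to $\znz(-d)$ in degree $2d$ and $0$ in all other degrees; hence $a_!\znz_{\af^d_K}\cong\znz(-d)[-2d]$, and the trace is an isomorphism by its normalization (Poincar\'e duality for $\af^d_K$).

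I do not expect a genuine obstacle, since the statement is a routine consequence of the six-functor formalism, relative purity, and the cohomology of projective space; if anything the only delicate point is purely bookkeeping — tracking the compatibility of the counit $\varepsilon$ with the purity isomorphism and with the base-change isomorphisms used in the reduction to fibres, and keeping straight the Tate twists and shifts (which are harmless anyway, since over the algebraically closed field $K$ every $\znz(m)$ is noncanonically isomorphic to $\znz$). One may also avoid the reduction to stalks altogether: an affine space bundle is Zariski-locally trivial, so compatibility of $\mathrm{tr}_p$ with restriction to open subschemes, plus the fact that $(-)_!$ commutes with base change, reduces the claim directly to the same computation for $\af^d_K$.
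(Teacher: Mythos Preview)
Your argument for assertion~\ref{ile} is correct, but it follows a different route from the paper's. The paper first proves the dual statement $\id_{\dsn(X)}\cong p_*p^*$ (homotopy invariance for affine space bundles, as in SGA5, Exp.~VII, Prop.~1.1(i)) and then applies Verdier duality: using a dualizing complex $K_X$ and $K_E=p^!K_X$, the identities $D_Xp_!\cong (p_*)^{op}D_E$ and $D_Ep^!\cong (p^*)^{op}D_X$ give $D_Xp_!p^!\cong (p_*p^*)^{op}D_X\cong D_X$, whence $p_!p^!\cong\id$ since $D_X$ is an involution.

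Your approach instead unpacks the counit explicitly via relative purity and the projection formula, reducing to the trace map, and then checks the trace fibrewise by computing $R\Gamma_c(\af^d_K,\znz)$ from the localization triangle for $\p^{d-1}\subset\p^d$. This is more hands-on and arguably more elementary in that it avoids invoking the existence of dualizing complexes and the full biduality statement, needing only purity for smooth morphisms and the cohomology of projective space; the price is the compatibility bookkeeping you yourself flag (trace vs.\ base change, purity vs.\ counit), which the paper's duality argument sidesteps entirely. Both proofs ultimately rest on the same circle of ideas, but the paper's is shorter and more formal, while yours makes the underlying geometry (the cohomology of affine space) visible.
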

\begin{proof} The statements are well-known (see  SGA4 and SGA4 1/2 for the proofs) and most of them were (actively) used in \cite{bbd} (see also Theorem 6.3 of \cite{eke}). %sga41/2 417???! THeore 4.3.1: PBCH??!! 
We will only give a few %(other) 
(more precise) references.

%Assertion \ref{idscat} is mostly given by Corollary 1.5 of \cite{sga45fin}.

Assertion \ref{icohh} can be deduced from the 'classical' properties of cohomology by applying adjunctions.

Assertion \ref{ikun} %the 'product part'??!!
follows easily from Corollary 1.11 of \cite{sga45fin}. 

Let us prove assertion \ref{ile}. Arguing similarly to Proposition 1.1.(i) of \cite{sga5e7} one can easily prove that $id_{\dsn(X)} \cong p_{*}p^{*}$.

By Theorem 0.9 of \cite{Gaber}, there exists a dualizing complex $K_X \in \dsn(X)$, and $K_E = p^{!}K_X$ is a dualizing complex for $E$. By Proposition 1.16 of \cite{sga5e1}, $D_X p_{!} \cong (p_{*})^{op} D_E$ and $D_E p^{!} \cong (p^{*})^{op} D_{X}$, where $D_X = \RHom(-, K_X): \dsn(X) \to \dsn(X)^{op}$ and $D_E = \RHom(-, K_E)$; hence $D_X p_{!} p^{!} \cong (p_{*})^{op} (p^{*})^{op} D_X \cong D_X$. %Now we note that 
 Since $(D_X)^{op} D_X \cong id_{\dsn(X)}$,  %Thus
  $p_{!} p^{!} \cong id_{\dsn(X)}$ indeed.

%\textcolor{red}{Next, recall that $p^{!}\gff \cong p^{*}(\gff[2r](r))\cong p^{*}\gff[2r]$, where $r$ is the rank of $p: E \to X, \gff \in \ds(X)$ (see 2.8.1 of \cite{sga418}). Indeed, $\gff(r) = \gff \otimes \mu_n^{\otimes r}\cong \gff$ (the definition of $\mu_n$ is given in 3.0 of \cite{sga49}; since $K$ is algebraically closed, we have  $\mu_n\cong \z/n\z$). Hence we obtain that for all $\gff, \gfg \in \ds(X): Hom(p_{!}p^{!}\gff, \gfg) \cong Hom(p^{!}\gff, p^{!}\gfg) \cong Hom(p^{*}(\gff[2r]), p^{*}(\gfg[2r]))$. By the above, $id_{\ds(X)} \cong p_{*}p^{*}$ and $([d]$, $[-d])$ is a pair of mutually inverse functors; thus $Hom(p^{*}(\gff[2r]), p^{*}(\gfg[2r])) \cong Hom(p_{*}p^{*}(\gff[2r]), \gfg[2r]) \cong Hom(\gff[2r], \gfg[2r]) \cong Hom(\gff, \gfg)$. Since the composition of morphisms above $Hom(p_{!}p^{!}\gff, \gfg) \iso Hom(\gff, \gfg)$ is an isomorphism, we obtain $p_{!}p^{!}\gff \iso \gff$ and therefore $id_{\ds(X)} \cong p_{!}p^{!}$.}
\end{proof}

We will also recall certain properties of  base change transformations (closely following \S4 of \cite{sga417}).

\begin{defi}\label{dbch}
For a commutative square 
\begin{equation}\label{ebch} \begin{CD} %???!!!!
 X' @<{\gi'}<< Z'\\
@VV{s_X}V @VV{s_Z}V\\
X @<{\gi}<< Z
\end{CD}\end{equation}
%whose vertical arrows
of  morphisms of varieties %???! Mention the other case as well as the HSS in the proof (below) only??!!
%are finite type morphisms of schemes, and horizontal ones are %??!!!
%of schemes (below we will only interested in the case when all of them are varieties over some fields)
 and $C\in \obj\dsn(X')$ we call the composition of the morphisms $\gi^*s_{X*}C\to   \gi^* s_{X*}\gi'_*\gi'^*C=  \gi^*\gi_* s_{Z*} \gi'^*C       \to s_{Z*}\gi'^*C$ coming from the corresponding adjunctions the {\it base change} morphism; denote by $B(s_X,\gi)$  the corresponding  natural transformation $\gi^*s_{X*}\implies s_{Z*}\gi'^*$.
\end{defi}

  \begin{pr}\label{pbch}
  
I. In the setting of Definition \ref{dbch} the following statements are fulfilled.
  
  1. The morphism $s_{X*}C\to  s_{X*}\gi'_*\gi'^*C$ that comes from the adjunction $\gi'^*:  %\linebreak
   \dsn(X')\leftrightarrows  \dsn(Z'):\gi'_*$  equals the composition of $\gi_*(B(s_X,\gi)(C))$ with the morphism $s_{X*}C\to \gi_*\gi^*s_{X*}C$ that comes from the adjunction  $\gi^*: \dsn(X)  %\linebreak 
    \leftrightarrows \dsn(Z):\gi_*$.
  
  2.  Let (\ref{ebch}) be a cartesian square. Then $B(s_X,\gi)$ is an isomorphism  either if $\gi$ is smooth %or a separable morphism of spectra of fields
   (this statement is called the {\it smooth base change} theorem) or if $s_X$ is proper (this is the {\it proper base change} theorem).

II. %composition! 
Base change transformations respect compositions, that is, for a commutative diagram
$$ \begin{CD} Z'@>{i_1'}>> O' @ >{i_2'}>> X'\\
@VV{s_Z}V
@VV{s_O}V @VV{s_X}V\\
Z@>{i_1}>> O @ >{i_2}>> X \end{CD}$$
we have $B(s_X,i_2\circ i_1)=B(s_O,i_1 )(i'_{2*}(-)) \circ i_1^*(B(s_X,i_2 ) ) $.

\end{pr}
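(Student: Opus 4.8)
The plan is to unwind both sides of the claimed identity into compositions of unit/counit morphisms for the various adjunctions $(\gi^*,\gi_*)$ appearing in the diagram, and then check that the two resulting chains of $2$-cells agree by repeated use of the triangle identities and naturality. Concretely, set $\gi = i_2 \circ i_1$ and write $\gi' = i_2' \circ i_1'$; since the functors $f^*$ respect composition (Proposition \ref{pds}.\ref{ifun}) we have $\gi^* \cong i_1^* i_2^*$ and $\gi'_* \cong i_2'_* i_1'_*$ canonically, and similarly on the primed side. First I would expand the definition of $B(s_X, i_2 \circ i_1)$ from Definition \ref{dbch}: for $C \in \obj \dsn(X')$ it is the composite
$$
\gi^* s_{X*} C \longrightarrow \gi^* s_{X*} \gi'_* \gi'^* C = \gi^* \gi_* s_{Z*} \gi'^* C \longrightarrow s_{Z*} \gi'^* C,
$$
where the outer arrows come from the unit of $(\gi'^*, \gi'_*)$ and the counit of $(\gi^*, \gi_*)$, and the middle equality is the commutation $s_{X*} \gi'_* = \gi_* s_{Z*}$ of pushforwards along the two sides of the outer square (which itself factors through the commutations along the left-hand and right-hand inner squares).

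The key step is to insert, at the level of the units and counits, the factorizations $\gi'_* \cong i_2'_* i_1'_*$ and $\gi_* \cong i_{2*} i_{1*}$, together with the standard fact that the unit of a composite adjunction $(\gi'^*, \gi'_*) = (i_1'^* i_2'^*,\, i_2'_* i_1'_*)$ is the "whiskered" composite of the units of the two factors, and dually for the counit of $(\gi^*,\gi_*)$. After this substitution, I would then commute the $i_1$-level data past the $i_2$-level data using naturality of the various transformations (i.e., the interchange law for horizontal and vertical composition of natural transformations) and the triangle identities for the adjunctions $(i_1^*, i_{1*})$ and $(i_2^*, i_{2*})$ to cancel a matched unit–counit pair. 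The diagram chase should then literally exhibit the composite as
$$
\gi^* s_{X*} \xRightarrow{\ i_1^*(B(s_X, i_2))\ } i_1^* s_{O*} i_2'^* \xRightarrow{\ B(s_O, i_1)(i_{2*}'(-))\ } s_{Z*} i_1'^* i_2'^* = s_{Z*} \gi'^*,
$$
which is exactly the right-hand side $B(s_O, i_1)(i'_{2*}(-)) \circ i_1^*(B(s_X, i_2))$ as stated.

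I expect the main obstacle to be purely bookkeeping: keeping track of the many unit/counit insertions and of where each one is whiskered by which functor, so that the triangle-identity cancellation is applied in the right place. It is entirely formal — no input beyond the abstract yoga of adjunctions and the observation (Proposition \ref{pds}.\ref{ifun}) that the four types of functors respect composition of morphisms — but it is easy to make a sign-free "off by one whiskering" error. A clean way to organize it, which I would follow, is to draw the two inner squares stacked as in the displayed diagram, label each adjunction unit and counit, and present the chase as a pasting diagram of $2$-cells; alternatively one can invoke Proposition \ref{pbch}(I.1) twice (once for $i_2$ and once for $i_1$) to reduce the composition statement to the already-recorded relation between a base change morphism and the corresponding adjunction unit, which trims the chase considerably. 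Either route gives a short, self-contained argument; I would choose the pasting-diagram version for transparency and remark that it is essentially \S4 of \cite{sga417}.
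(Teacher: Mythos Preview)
Your proposal is correct and matches the paper's approach: both simply unwind the definition of $B(s_X,\gi)$ using the factorizations $\gi^*=i_1^*i_2^*$, $\gi'_*=i_{2*}'i_{1*}'$ and the decomposition of the unit and counit of a composite adjunction, then observe that the resulting chain of morphisms is exactly $B(s_O,i_1)(i_2'^*(-))\circ i_1^*(B(s_X,i_2))$. One small remark: you will find that no triangle-identity cancellation is actually needed---after inserting the two units and two counits, the only step required to match the two sides is a single use of naturality (interchange) to swap the order in which you insert the unit of $(i_1'^*,i_{1*}')$ and apply the counit of $(i_2^*,i_{2*})$; the paper accordingly just writes out the chain and declares it ``obvious''.
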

\begin{proof}
I.1. We have the composition $s_{X*}C\to \gi_* \gi^*s_{X*}C\to   \gi_*\gi^* s_{X*}\gi'_*\gi'^*C=  \linebreak   \gi_* \gi^*\gi_* s_{Z*} \gi'^*C    %\linebreak   
\to \gi_* s_{Z*}\gi'^*C =s_{X*}\gi'_* \gi'^*C$; hence it suffices to verify that the composition  $s_{X*}\gi'_* \gi'^*C=\gi_* s_{Z*} \gi'^*C
   \to\gi_* \gi^*\gi_* s_{Z*} \gi'^*C     \to \gi_* s_{Z*}\gi'^*C$ is the identity.
The latter statement is immediate from the fact the composition of transformations $\gi_*\implies \gi_* \gi^*\gi_*\implies \gi_*$
(coming from the  adjunction $\gi^*: \dsn(X) \leftrightarrows %\linebreak 
 \dsn(Z):\gi_*$) is the identical transformation of $\gi_*$ (this is a basic general property of adjunctions).

2. These are the derived versions of the classical base change isomorphism statements; %they are (somewhat) partial cases of Theorems and 
see Corollary 1.2 of \cite{sga416} and Theorem 4.3.1 of \cite{sga417}, respectively. %I7.3 and  I6.1 of\cite{fk}.

II. This is also a well-known statement (see \S2 of ibid.).
We should check that $B(s_X,i_2\circ i_1)$ equals
the composition $(i_2\circ i_1)^*s_{X*}\implies    (i_2\circ i_1)^*   s_{X*}i'_{2*}i'^*_2=     i^*_{1}    i^*_{2}  i_{2*}s_{O*} i'^*_2\implies$ \
            $ i^*_{1}   s_{O*} i'^*_{2} \implies   i^*_{1}   s_{O*}  i'_{1*}i'^*_1 i'^*_{2} = i^*_{1}   i_{1*}   i'^*_{2}s_{Z*}  i'^*_{1}    \implies   s_{Z*}  i'^*_{1}  i'^*_{2}; $ this is obvious. %??!!

\end{proof}

%Existence of $t$: \S4.0 and \S2.2.11 of \cite{bbd} for $\zlz$-sheaves, or \S4.0 and \S2.2.31 of \cite{bbd} for $\znz$-ones??!! 

 Now we recall some of the properties of the perverse $t$-structure $p$ for $\dsn(-)$ (that corresponds to the %self-dual perversity denoted by $p_{1/2}$ in \cite{bbd}).
  so-called self-dual perversity function %denoted by 
 $p_{1/2}$). $p$ is given by a couple %\linebreak 
  ($\dsn(-)^{p \le 0}, \dsn(-)^{p \ge 0}$) of subclasses of $\obj \dsn(-)$; $p$ %(that corresponds to the so-called self-dual perversity function %denoted by $p_{1/2}$) 
  was described in \cite[\S4.0]{bbd} (see also %\S2.1.16 of ibid. % ;and 
  \S2.2.11--13 of ibid.; 
   cf. \S III.1 of \cite{kw}).
 %We will not need much of them (we even will not need a definition for $p$); so we just 
 %Now we 
 %recall some of its  %the results of \cite{bbd}. % and verify briefly that two statements in \cite{kw} can be carried over to our setting (of $\znz$-module sheaves). % for an arbitrary $K$). 

We %do not 
need neither the definition of $p$ nor of a $t$-structure in this paper; we will only use some properties of $p$ that essentially originate from \cite{bbd}.

\begin{pr}\label{ppt}
Assume that  $f:X\to Y$ is a morphism of $K$-varieties of relative dimension $\le d$.

Then for the perverse $t$-structure $p$ %(given by the couple ($\ds(-)^{p \le 0},\break \ds(-)^{p \ge 0}$); cf. ibid.) 
 corresponding to the self-dual perversity function $p_{1/2}$ the following statements are fulfilled.
\begin{enumerate}

\item\label{ifield} If $X=\spe K$, %$K$ is an algebraically closed  field, 
 then $p$ is just the canonical $t$-structure on $\dsn(\spe K)$ (thus it is compatible with  the canonical $t$-structure for $D^b(\znz-\modd)$).
%In particular
 Equivalently, $C\in  \obj\dsn(\spe K)$ belongs to $\dsn(\spe K)^{p \ge 0}$ (resp. to  $\dsn(\spe K)^{p \le 0}$) %nafig???
 if and only if %\linebreak 
  $H^i(\spe K,C)=0$ for any $i<0$ (resp. for $i>0$).

\item\label{itaff} Suppose that there exists an open cover $Y$ by its affine subvariety $Y_i$, such that $f^{-1} (Y_i)$ can be presented as a union of $b$ open affine subvariety for every $i$. Then $f_!(\dsn(X)^{p\ge 0})\subset  \dsn(Y)^{p\ge 1-b}$ (here for $r\in \z$ we write $\dsn(-)^{p\ge r}$  for the class %\linebreak 
 $\dsn(-)^{p\ge 0}[-r]$ ). %a variety $U/K$ can be presented as the union of $b$ open affine subvarieties (for some $b$); $u:U\to \spe K$ is its structure morphism. Then $u_!(\dsn(U)^{p\ge 0})\subset \linebreak \dsn(K)^{p\ge 1-b}$ (here for $r\in \z$ we write $\dsn(-)^{p\ge r}$  for the class \linebreak  $\dsn(-)^{p\ge 0}[-r]$ ).

%Now let

\item\label{itexdimd} $f_*(\dsn(X)^{p\ge 0})\subset \dsn(Y)^{p\ge -d}$.
%and $f^!(\ds(Y)^{p\ge 0}) \subset \ds(X)^{p\ge -d}$. 

%open? etale f^*????!!!

\item\label{itshr}  $f^!(\dsn(Y)^{p\ge 0}) \subset \dsn(X)^{p\ge -d}$. %??????\footnote{If we assume moreover that $f$ is a smooth morphism with geometrically connected fibres of dimension $d$, then $f^!(\dsn(Y)^{p\le 0}) \subset \dsn(X)^{p\le -d}$ (see Proposition 4.2.5 of \cite{bbd}); yet we will not need this statement below.}

\item\label{itupstur} %Let $f: X \to Y$ be a
Assume that $f$ is a smooth morphism all of whose (non-empty) fibers are 
of dimension $d$. Then $f^*(\dsn(Y)^{p = 0})\subset \dsn(X)^{p = d}$.

%with (non-empty) geometrically connected fibres of dimension $d$. %(in particular, every fiber is non-empty). 

%Then $f^*(\ds(Y)^{p = 0})\subset \ds(X)^{p = d}$. Moreover, if $C \in \obj \ds(Y)$ and $f^*C \in \ds(X)^{p \ge d}$ then $C \in \ds(Y)^{p \ge 0}$.

Moreover, if all the fibres of $f$ are non-empty and geometrically connected and for $C \in \obj \dsn(Y)$ we have $f^*C \in \dsn(X)^{p \ge d}$ then $C \in \dsn(Y)^{p \ge 0}$.

\end{enumerate}

\end{pr}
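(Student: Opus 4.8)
The plan is to prove Proposition \ref{ppt} part by part, working from the known properties of the perverse $t$-structure in \cite{bbd}. For assertion \ref{ifield}: when $X = \spe K$, the perverse $t$-structure on $\dsn(\spe K) \cong D^b(\znz-\modd)$ associated to the self-dual perversity is the canonical one because the perversity function assigns $0$ to the unique (zero-dimensional) stratum; the "equivalently" clause is then just the standard description of the canonical $t$-structure on a derived category of modules in terms of vanishing of cohomology in negative (resp. positive) degrees, combined with Proposition \ref{pds}(\ref{ik},\ref{icohh}) which identifies $H^i(\spe K, C)$ with the $i$-th cohomology module of $C$.

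Next I would treat assertions \ref{itaff}, \ref{itexdimd}, \ref{itshr}, which are the "dimension estimates'' for the four functors. Assertion \ref{itaff} is Artin's vanishing theorem in its perverse formulation: for an affine morphism $g$, one has $g_!(\dsn(-)^{p \ge 0}) \subset \dsn(-)^{p \ge 0}$ (this is \cite[Th\'eor\`eme 4.1.1]{bbd} or the cited form of Artin vanishing). Writing each $f^{-1}(Y_i)$ as a union of $b$ affine opens, one uses the Mayer--Vietoris / gluing triangle from Proposition \ref{pds}(\ref{iglu}) iteratively: an object on a union of $b$ affines sits in a filtration whose graded pieces are pushed forward from affines, which costs $b-1$ shifts, and then checking membership in $\dsn(Y)^{p \ge 1-b}$ is local on $Y$ (here one uses that the perverse $t$-structure is local for the \'etale, or even Zariski, topology on the base). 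Assertion \ref{itexdimd}: since $f$ has relative dimension $\le d$, by \cite[\S4.2.4]{bbd} $f_*$ shifts $\dsn(-)^{p \ge 0}$ by at most $d$; alternatively this follows from \ref{itaff} by covering the fibres and bounding the number of affines needed in terms of $d$, or directly from the estimate on the cohomological amplitude of $f_*$ combined with the support condition. Assertion \ref{itshr} is the Verdier-dual statement to \ref{itexdimd}: applying the duality functors $D_X, D_Y$ (which exist by \cite{Gaber}, as already invoked in the proof of Proposition \ref{pds}(\ref{ile})) and using $D_X f^! = (f^*)^{op} D_Y$ together with the fact that $D$ interchanges $p \ge 0$ and $p \le 0$, one converts the bound on $f_*$ into the desired bound on $f^!$; one must be slightly careful that \ref{itexdimd} as stated concerns $f_*$ and the dual of $f^!$ is $f^*$, so one instead dualizes the (standard) amplitude estimate for $f^*$, or equivalently uses \cite[\S4.2.4]{bbd} directly for $f^!$.

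For assertion \ref{itupstur}, the first sentence: if $f$ is smooth with all fibres of pure dimension $d$, then $f^*[d]$ is $t$-exact for the perverse $t$-structures (this is \cite[\S4.2.4]{bbd}, using that $f^*[d] \cong f^![-d]$ up to a twist for $f$ smooth of relative dimension $d$, and that $f^!$ shifted appropriately is $t$-exact). Hence $f^*$ sends $\dsn(Y)^{p=0}$ into $\dsn(X)^{p=d}$. For the last, and in my view hardest, clause: assume all fibres of $f$ are non-empty and geometrically connected and $f^*C \in \dsn(X)^{p \ge d}$; we must deduce $C \in \dsn(Y)^{p \ge 0}$. The point is that $f^*[d]$ is not merely right $t$-exact but also conservative-enough on the relevant half: one should show that if $f^*C[d] \in \dsn(X)^{p \ge 0}$ then $C \in \dsn(Y)^{p \ge 0}$. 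I would argue by checking the support condition defining $\dsn(Y)^{p \ge 0}$ stalk by stalk (or costalk by costalk), i.e. that for every point $y \in Y$ the costalk $i_y^! C$ lives in the appropriate range; pulling back along $f$ and using that the fibre $X_y = f^{-1}(y)$ is non-empty and connected of dimension $d$, one relates $i_y^! C$ (shifted by $d$) to the perverse costalk of $f^*C$ along $X_y$ via the smooth base change of Proposition \ref{pbch}(I.2) and the compatibility $f^! = f^*[2d](d)$. Non-emptiness of the fibre guarantees the pullback detects non-vanishing, and geometric connectedness rules out the pullback splitting off a spurious acyclic summand, so $f^*C \in \dsn(X)^{p \ge d}$ forces the costalk condition for $C$ at $y$. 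The main obstacle is precisely making this descent argument rigorous: one needs that $f^*[d]$, while not faithful on all of $\dsn(Y)$, \emph{reflects} the half of the $t$-structure in question, and the cleanest route is likely to invoke a known statement to this effect (a smooth descent property for the perverse $t$-structure, essentially in \cite{bbd} via \'etale-local recognition of $\dsn(-)^{p \ge 0}$, together with the observation that a smooth surjection with connected fibres can be refined to an \'etale cover after passing to sections), rather than reproving it from scratch.
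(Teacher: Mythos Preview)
Your proposal is correct and follows essentially the same approach as the paper: both ultimately defer to \cite{bbd}, with you giving more explicit sketches of the arguments found there (the Mayer--Vietoris count for \ref{itaff} is precisely what \S4.2.3 of \cite{bbd} does, and your costalk descent for the last clause of \ref{itupstur} is the content of \S4.2.5). The one step the paper includes that you omit is an explicit reduction to the case $n=\ell^m$ via the product decomposition $\dsn(X)\cong \prod_i D_{\ell_i^{m_i}}(X)$; this matters because the cited results in \cite{bbd} are formulated for prime-power torsion coefficients, so you should add this reduction at the outset.
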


\begin{proof}

If $n=l_1^{m_1}l_2^{m_2} \ldots l_s^{m_s}$ is the canonical representation of $n$, then $\dsn(X) \cong %\linebreak
 \cong D_{l_1^{m_1}}(X) \times \ldots \times D_{l_s^{m_s}}(X)$.
 %D^b_c\,Sh^{et}_{\z/l_s^{m_s}\z-\modd}(X)$. 
  Consequently, it suffices to prove all the statements in question in the case $n=\ell^m$.

%We can reduce $\znz$ case to $\zlnz$ case, because if $n=p_1^{m_1}p_2^{m_2} \ldots p_s^{m_s}$, then $D^b_c\,Sh^{et}_{\znz-\modd}(X) \cong D^b_c\,Sh^{et}_{\z/p_1^{m_1}\z-\modd}(X) \oplus \ldots \oplus D^b_c\,Sh^{et}_{\z/p_S^{m_S}\z-\modd}(X)$.}
%\textcolor{red}{Note that the perverse $t$-structure defined in Proposition 2.2.13 of \cite{bbd} for $\z/l^m\z$-module sheaves, where $l$ is a prime distinct from $p$, but if $n=p_1^{m_1}p_2^{m_2} \ldots p_s^{m_s}$, then $D^b_c\,Sh^{et}_{\znz-\modd}(X) \cong D^b_c\,Sh^{et}_{Z/p_1^{m_1}Z-\modd}(X) \oplus \ldots \oplus D^b_c\,Sh^{et}_{Z/p_S^{m_S}Z-\modd}(X)$, hence we obtain the product of $t$-structures and all facts still hold.}

Now, %in the case
 for $n=\ell^m$
most of these statements were essentially %either mentioned and used in \cite{bbd}, or were proved there.
established in \cite{bbd}. Note here that the proofs work in the case of $\zlmz$-coefficients since they are deduced from the "usual"    properties of (torsion) \'etale sheaves (the most difficult of them being the "ordinary" Artin's vanishing); see \S4.0 of %\cite{bbd}.
ibid.
%+{huper}??!!

%\begin{enumerate}
%Assertion \ref{ifield}  %easily follow from the definition of $t$ (one should combine \S2.2.12-13 of ibid. with Proposition \ref{pds}(\ref{iverd})). %\S2.1.16??!! $\zlz$??!
%\item 
\ref{ifield}. Immediate from the definition of $p$. % (see \S2.2.12-13 of ibid.). 

%\item 
\ref{itaff}. Immediate from  \S4.2.3 of \cite{bbd}.

\ref{itexdimd}, \ref{itshr}.  These statements are contained in \S4.2.4 of ibid.

\ref{itupstur}. The first part of the assertion is contained in \S4.2.4 of ibid. as well. % follows from  \S4.2.5 of \cite{bbd}.
The second part is given by \S4.2.5 of \cite{bbd}.

%Also from \S4.2.5 of \cite{bbd} we have if $C' \in \ds(Y)^{p = 0}$, such that $f^{*}C' = 0$, then $C' = 0$. Hence if $C' \in \ds(Y)$, such that $f^{*}C' = 0$, then $C' = 0$. Consequentially, the second statement follows from the last fact for $C' = \tau_{p \le 0}C$ and the distinguished triangle $f^*(\tau_{p \le 0} C) \to f^{*}C \to f^*(\tau_{p \ge 1} C)$, where $f^*(\tau_{p \le 0} C) \cong \tau_{p \le d} f^*C$ and $f^*(\tau_{p \ge 1} C) \cong \tau_{p \ge d + 1} f^*C$ (also from \S4.2.5 of \cite{bbd}).

%\item This statement is contained in \S4.2.4 of ibid.

%\item This statement is contained in \S4.2.4 of ibid.

%\item %Both of these  statements are 
 %This contained in \S4.2.4 
  %See Proposition 4.2.5 of ibid.
%\item In \S4.2.4 of ibid. it is also stated that $f^!(\ds(Y)^{p\ge 0}) \subset \ds(X)^{p\ge -d}$

%\end{enumerate}
    
\end{proof}

%We also need some 
\section{Depth and excess %of  %characteristics 
of varieties}\label{schar}

We need some quantitative characteristics of varieties. The reader interested in local complete intersections can ignore the theory discussed in this section; see Remark \ref{rlci}(2) below for the detail. The most original statement of this section is Proposition \ref{hd}(\ref{hdintersect2}).

\begin{defi}\label{dee}
Let $V$ be a $K$-variety  of dimension $a$; assume $c,d\ge 0$.

1. %For an integer $c\ge 0$ we
%We will say that $V$ 
%is of {\it $\znz$-cohomological depth $\ge d$}  and write $hd(V, n)\ge d$ whenever $\znz_V\in \ds(V)^{p\ge d}$. 
We will say that $V$ is of {\it $\znz$-cohomological depth $d$} and write $\hd(V, n) = d$ or just $\hd(V)=d$ whenever $d$ is the supremum of those $i$ such that $\znz_V\in \dsn(V)^{p\ge i}$ (set $\hd( \emptyset, n) = +\infty$).

%We will say that 
If $\hd(V, n) \ge a$ then we say that  $V$ is {\it $\znz$-cohomologically a complete intersection} (see Remark \ref{rlci}(1)). 

%For simplicity we will often omit $n$ and write $\hd(V)$.

2. We will %call a  variety  % locally a set-theoretic 
%an  set-theoretically local complete intersection (or just an stLCI) if it requires no more than 0 extra equations on every closed point. %Certainly, any local complete intersection is a stLCI.
%possesses a Zariski cover each of whose  components is isomorphic to an open subvariety of a  set-theoretic complete intersection in $\p^N$ (for some $N\ge 0$).  
%3. More generally, for a integer $c\ge 0$ we will say that a  variety $V$ of dimension $m$ {\it requires no more than $c$ extra equations} if it requires no more than $c$ extra equations on every closed point. 
%for some $N\ge 0$ there exists a Zariski cover of $V$ by open subvarieties of some set-theoretic intersections of $N-m+c$ hypersurfaces in $\p^N$ (so, we omit "local" and "set-theoretic" in this term). 
say that the {\it equational excess} of $V$ is at most $c$ %extra equations in a closed point $p$} if 
 whenever for any point $v\in V$  there exists $N>0$ and a Zariski open subvariety $U$ of $V$ containing $v$, such that $U$ is isomorphic to an open subvariety of a set-theoretic intersection of $N-a+c$ hypersurfaces in $\p^N$.

 If $X$ is of equational excess at most $0$ then we will also say that $X$ is of equational excess  $0$ (for obvious reasons). 
\end{defi}

\begin{rema}\label{rlci}
   % 1.

1. %Note that $\znz$-cohomological depth %is always no more than dimension.
Since $\znz_{\spe K} \in \dsn(\spe K)^{p = 0}$ and $f^{*}(\dsn(\spe K)^{p \le 0}) \subset \dsn(V)^{p \le a}$ (see Proposition 4.2.4 of \cite{bbd}), $\hd(V, n) \le a$.

 2.  Clearly, any equidimensional  local complete intersection (in particular, any smooth variety) %is a stLCI.
     has equational excess %(at most) 
      $0$. %More generally,??
     More generally, $V\subset Y$ has equational excess $0$ if $V$ is equidimensional and locally a  set-theoretic complete intersection inside an equidimensional local complete intersection variety $Y$. 
     
     % then $X$
Consequently, any  %equidimensional %set-theoretic complete intersection of dimension 
 $V$ of this sort %$a$ 
 is $\znz$-cohomologically a complete intersection (that is,
has  $\znz$-cohomological depth $\ge \dim V$); see Proposition \ref{hd}(\ref{iolstci}) below.

 3.    On the other hand, the authors do not know whether any $V$ of equational excess $0$ is a  set-theoretic complete intersection inside a local complete intersection variety. %Moreover, it appears that even for %an arbitrary 
      %a closed embedding of $V$ %(even) 
       % into %$X=\p^N$ 
        % (say) $\p^N$ the assumption that  $V$ is of equational excess $0$ does not imply that $V$ is a  set-theoretic complete intersection in $\p^N$. Furthermore, 
         Moreover, the authors do not know of %for an embedding 
any general estimates for % the (easily defined) 
 $r$ such that $V$ is locally set-theoretically defined by at most $r$  equations in $X$ (cf. \cite{lyubez}). This makes a comparison of some of our results with those of ibid. rather difficult; cf. %Remark \ref{rlyub}(3)  
 \S\ref{srem}.\ref{irlyub}--\ref{irlyub2} below.

    %2. %Можно было определенить, например, stLCI по-другому. А именно, stLCI - это многообразие, допускающее открытое по Зарисскому покрытие множествами, изоморфными открытым подмножествам теоретико-множественных пересечений; Однако очевидно, что два этих определения совпадают.
\end{rema}

%Below we prove a few simple but useful properties of a homological depth.

Let us prove %a few simple statements related to our %quantitive 
 %definitions.
  some properties of our definitions. It is worth noting that %in %the  following proposition 
  part  \ref{hdintersect} of the proposition below follows from the remaining assertions; its main purpose is to be a step in the proof of assertion \ref{hdintersect2}.

\begin{pr} \label{hd}

Let $X$ and $Y$ be $K$-varieties.

\begin{enumerate}

%The reader may easily note that we also have 
 %Moreover, $f^!(\ds(Y)^{p\le 0}) \subset \ds(X)^{p\le -d}$; yet we will not need this statement below.}

%\item\label{ilstci} Let $X$ be a connected set-theoretical %ly a  local   complete intersection (in the sense described above). % , i.e., suppose it possesses a Zariski cover each of whose  components is isomorphic to an open subvariety of a  set-theoretic complete intersection in $\p^N$ for some $N\ge 0$) of dimension $a\ge 0$. 
 %Then $\znz_X\in \ds(X)^{p\ge a}$. % for any $r\in \z$. %= a??!! nafig nado??!!

\item\label{itlocal} The following assumptions on %a $K$-variety 
$X$ and $d\ge 0$ are equivalent.

(i) $\hd(X,n)\ge d$;

(ii) for any open $U\subset X$ we have  $\hd(U,n)\ge d$;

(iii) There exists an open cover $X$ by its subvarieties $X_i$ such that  $\hd(X_i,n)\ge d$ for any $i$.

  \item \label{prod}
 $\hd(X \times Y, n) \ge \hd(X, n) + \hd(Y, n)$ (cf. Remark \ref{rhamm}(\ref{iprod}) below).

\item \label{hdintersect}
Assume that $X$ and $Y$ are subvarieties of $\af^N$, such that $Y$ is a set-theoretic intersection of $r$ hypersurfaces in $\af^N$. Then $hd(X \cap Y, n) \ge \hd(X, n) - r$.

\item\label{iolstci} %More generally, assume 
 Assume that $X$ is of dimension $a$ %and %requires no more than $c$ extra equations (for some $c\ge 0$, %in the sense described above
  and has equational excess $\le c$ (see Definition \ref{dee}(2)). Then $hd(X, n) \ge a-c$.

  \item \label{hdintersect2}
  Assume that $Z$ is an equidimensional smooth variety of dimension $N$ and $X$ and $Y$ are subvarieties of $Z$. Then $hd(X \cap Y, n) \ge \hd(X, n) + hd(Y, n) - N$.

  \item \label{igmbundle}
Let $\pi: E \to X$ be a locally trivial $G_m^q$-bundle or an affine space of rank $q$. Then $\hd(E, n) = \hd(X, n) + q$.

\end{enumerate}

\end{pr}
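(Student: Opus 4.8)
The plan is to reduce the statement to the behaviour of the pullback $\pi^*$ with respect to the perverse $t$-structure. First I observe that $\pi$ is a smooth morphism of relative dimension $q$ all of whose fibres are non-empty, geometrically connected, and of dimension $q$, since each fibre is isomorphic to $G_m^q$ or to $\af^q$; moreover $\pi^*\znz_X=\znz_E$ by Proposition \ref{pds}(\ref{iupstar}). If $X=\emptyset$ then $E=\emptyset$ and both sides of the claimed equality equal $+\infty$, so from now on I may assume $X\neq\emptyset$, equivalently $\hd(X,n)<+\infty$.

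To get $\hd(E,n)\ge\hd(X,n)+q$ I would use that for a smooth morphism of relative dimension $q$ the functor $\pi^*[q]$ is $t$-exact for the perverse $t$-structures; in particular $\pi^*$ carries $\dsn(X)^{p\ge r}$ into $\dsn(E)^{p\ge r+q}$ for every $r\in\z$. Applying this with $r=\hd(X,n)$ to $\znz_X\in\dsn(X)^{p\ge\hd(X,n)}$ gives $\znz_E=\pi^*\znz_X\in\dsn(E)^{p\ge\hd(X,n)+q}$, which is the desired inequality.

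For the reverse inequality $\hd(E,n)\le\hd(X,n)+q$ the key input is the second ("moreover") part of Proposition \ref{ppt}(\ref{itupstur}), which applies precisely because the fibres of $\pi$ are non-empty and geometrically connected. Suppose for contradiction that $\znz_E\in\dsn(E)^{p\ge\hd(X,n)+q+1}$; shifting by $\hd(X,n)+1$, we obtain $\pi^*\bigl(\znz_X[\hd(X,n)+1]\bigr)=\znz_E[\hd(X,n)+1]\in\dsn(E)^{p\ge q}$, so that part of Proposition \ref{ppt}(\ref{itupstur}) yields $\znz_X[\hd(X,n)+1]\in\dsn(X)^{p\ge 0}$, i.e., $\znz_X\in\dsn(X)^{p\ge\hd(X,n)+1}$. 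This contradicts the fact that $\hd(X,n)$ is the supremum of the $i$ with $\znz_X\in\dsn(X)^{p\ge i}$. Combining the two inequalities gives $\hd(E,n)=\hd(X,n)+q$.

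I expect the only real point requiring care to be the $t$-exactness of $\pi^*[q]$ used in the second paragraph: Proposition \ref{ppt}(\ref{itupstur}) as stated only records that $\pi^*$ sends the perverse heart $\dsn(X)^{p=0}$ into $\dsn(E)^{p=q}$, whereas I need the analogous inclusion for the coaisles $\dsn(-)^{p\ge r}$. This can be obtained either by citing \S4.2.4 of \cite{bbd} directly, or by combining the relative purity isomorphism $\pi^!\cong\pi^*(q)[2q]$ with the bound $\pi^!(\dsn(X)^{p\ge 0})\subset\dsn(E)^{p\ge -q}$ of Proposition \ref{ppt}(\ref{itshr}) and the $t$-exactness of the Tate twist, or by writing an object $C\in\dsn(X)^{p\ge 0}$ as a finite iterated extension of the shifts ${}^{p}H^{i}(C)[-i]$ ($i\ge 0$) of its perverse cohomology objects (using boundedness of the perverse $t$-structure on $\dsn(X)$), applying the heart statement of Proposition \ref{ppt}(\ref{itupstur}) to each ${}^{p}H^{i}(C)$, and noting that $\dsn(E)^{p\ge q}$ is closed under extensions and under $[-i]$ for $i\ge 0$. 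Everything else in the argument is formal.
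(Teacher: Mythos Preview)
Your argument for part~\ref{igmbundle} is correct and is exactly the approach the paper takes: its proof is the one line ``It follows from Proposition~\ref{ppt}(\ref{itupstur}) for $f=\pi$ and Proposition~\ref{pds}(\ref{iupstar}),'' and you have simply unpacked that reference in full, including the use of the ``moreover'' clause for the upper bound. Your observation that Proposition~\ref{ppt}(\ref{itupstur}) as stated only gives the heart inclusion $\pi^*(\dsn(X)^{p=0})\subset\dsn(E)^{p=q}$ rather than the coaisle inclusion is well taken, and each of your three proposed fixes is valid; the third (filtering by perverse cohomology and using closure of the coaisle under extensions) is the most self-contained relative to what the paper actually states.
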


\begin{proof}
\begin{enumerate}
\item Condition (i) implies condition (ii) according to Proposition \ref{ppt}(\ref{itshr}), whereas (ii) obviously implies (iii).

 To verify that condition (iii) implies condition (i) one should just look at the definition of $p$.

 \item Similarly to the proof of Proposition \ref{ppt}, we can apply the $\znz$-version of the results of \cite[\S4.2]{bbd} in the proof of this proposition.
 
So,  recall that the external tensor product $\znz_X \boxtimes \znz_Y$ (see \S4.2.7 of \cite{bbd}) is well known to be isomorphic to   %\cong 
  $\znz_{X \times Y}$.  %this statement 
   Hence our assertion easily follows from the $\zlmz$-version of Proposition 4.2.8 of \cite{bbd} (cf. Lemma \ref{l-adic} below).
   
   \item % Assertion \ref{itlocal}  clearly enables us to assume that $X$ and $Y$ are closed subvarieties of $Z$.?????
By $i$ denote the corresponding embedding $X \cap Y \to X$.

 Since $i_*$ is conservative and $t$-exact (see \S4.2.4 of \cite{bbd}), it suffices to verify that $i_*\znz_{X\cap Y}\linebreak \cong i_*i^*\znz_{X} \in  \dsn(X)^{p\ge \hd(X, n) - r}$.
By definition, $\znz_X \in \dsn(X)^{p\ge \hd(X, n)}$. Consider the following (rotation of the) distinguished triangle given %by Proposition \ref{pbs}(\ref{iglu}(ii))):
by Proposition \ref{pds}(\ref{iglu}): 
$$ \znz_{X} \to % \linebreak??
 i_*i^*\znz_{X} \to j_!j^*\znz_{X}[1].$$ 
 
 We have $j_!j^*\znz_{X}[1]\cong j_!\znz_{U}[1]$, where $j$ is the embedding of $U = X \setminus (X\cap Y) \to X$. Next, according to  assertion \ref{itlocal} we have $\znz_{U}[1] \in \dsn(U)^{p \ge \hd(X, n)-1}$. Since $j^{-1}(V)$ is a union of $r$ open affine varieties for any open affine subvariety $V$ of $X$, hence by Proposition \ref{ppt}(\ref{itaff}) $j_! (\dsn(U)^{p \ge 0}) \subset \dsn(X)^{p \ge 1 -r}$. Consequently, $j_!\znz_{U}[1] \in  \in \dsn(X)^{p \ge \hd(X, n) - r}$; thus the same is true for $i_* \znz_X$ and we obtain the result in question.

 \item According to assertion \ref{itlocal}, we can assume that $Y$ is a set-theoretic %complete
intersection %in an 
 of an open %affine %??!
 subvariety $V$ of %??!!! 
$\af^N$ (for some $N>0$) with $N-a+c$ hypersurfaces. 

Since $V$ is smooth, Proposition \ref{ppt}(\ref{itupstur}) combine with Proposition \ref{pds}(\ref{iupstar}) yields that $\znz_V \in \linebreak\dsn(V)^{p = N}$; hence $hd(V, n) = N$. 

%the functor $v^*[N]$ %\circ (-\otimes \znz(N)[N])$ 
%is $t$-exact for the structure morphism $v:V\to \spe K$ (see %\S4.2.4??!!
 %Proposition 4.2.5 of \cite{bbd}). Hence $\znz_V \in \ds(V)^{p = N}$, consequentially $hd(V, n) \ge N$. 
 
 Now the previous assertion yields the result.

\item According to assertion \ref{itlocal} we can assume that $X, Y$ and $Z$ are subvarieties of $\af^m$ (for some $m \ge 0$).

We have $X \cap Y$ isomorphic to an intersection of $X \times Y$ and $\Delta$ in $Z \times Z$, where $\Delta \cong Z$ is the diagonal in $Z \times Z$.

Since $\Delta$ is a local complete intersection and $Z \times Z$ is smooth, $\Delta$ is a local complete intersection inside $Z \times Z$ (see Proposition 5.2 of \cite{sga214}), that is, it is locally defined by $N$ equations inside $Z \times Z$. Since $X \times Y \subset Z \times Z$, %according to 
 assertion \ref{itlocal} %we can 
  enables us to %assume that $X \cap Y$ is locally isomorphic to 
   replace $X \cap Y$ with an intersection of $X \times Y$ with $N$ hypersurfaces.

According to assertion \ref{prod} $\hd(X \times Y, n) \ge  \hd(X, n) + \hd(Y, n)$; hence assertion \ref{hdintersect} yields the result.

\item %First we show that if $C \in \obj \ds(X)$ has non-trivial hypercohomology, then $\pi^*C$ has non-trivial hypercohomology.

%Now recall that $\pi^*$ is exact as a functor between categories of \'etale sheaves, so it sufficient to check that if $\gff$ is a non-zero sheaf, then $\pi^* \gff$ is a non-zero sheaf. Let $j: U \to X$ be an \'etale map, such that $\gff(U) \ne 0$, hence $(\pi^* \gff)(\pi^{-1}(U)) \cong \gff(U) \ne 0$ (see \href{https://stacks.math.columbia.edu/tag/0A3H}{59.39.3}). Consequentially, $\pi^* \gff \ne 0$.

%Next by $d$ we denote $\hd(X, n)$.

It follows from Proposition \ref{ppt}(\ref{itupstur}) for $f = \pi$ and Proposition \ref{pds}(\ref{iupstar}).

%According to assertion \ref{itlocal}, we can assume that $\pi$ is a trivial bundle. Since $G_m$ is smooth, $G_m^q$ is smooth as well. Consequently, $G_m^q$ has  equational excess $0$; hence according to assertion \ref{iolstci} $\hd(G_m^q, n) = q$. Now we can apply argument from the proof of assertion \ref{prod}. 

\end{enumerate}
\end{proof}

\begin{rema} \label{rhamm}
    \begin{enumerate}
        \item \label{irhamm}
    Our definition of $\znz$-cohomological depth is closely related to the notion of rectified homological depth %defined 
     introduced in \S1.1.3  of \cite{hammle1}; see Corollary 1.10 of ibid. However, in ibid. the (constant) sheaf $\q_V$ was considered (instead of $\znz_V$). What it even more important, ibid. and related papers (including \cite{hammle2}) treat the complex analytic setting only; %thus 
     it appears that their results and arguments cannot be carried over to the case $p>0$. %However, these papers (along with the stratification methods developed in them) can possibly  be used to construct some interesting examples.

Recall also that the notion of rectified homological depth is closely related to the so-called rectified homotopical depth that was essentially
defined by Grothendieck (and is the central notion of \cite{hammle1}).

  \item\label{iexa1}
Let us demonstrate that  $hd(X, n)$ can depend on $n$ (if we fix $X$). %Clearly, it immediately follows that 
Our example also demonstrates %(automatically)
 that there exists a connected $X_m^N$ of dimension $N$ %and %requires no more than $c$ extra equations (for some $c\ge 0$, %in the sense described above
  such that $hd(X_m^N, n)=N$ for some $n>1$ but %$X$  is not a %does not have equational excess $\le c$
   the equational excess of $X$ is positive.

For simplicity,  we take $K = \com$. We choose $N\ge2$ and take the following action of the group $\zmz$ on $\com^N$: $g^k ((x_1, x_2, \ldots x_N))= (\zeta_m^k \cdot x_1, \zeta_m^k \cdot x_2, \ldots \zeta_m^k \cdot x_N)$ for all $k \in \z$ and
$(x_1, x_2, \ldots x_N)\in \com^N$; here $g$ is a generator of  $\zmz$  and $\zeta_m$  is a primitive $m$th root of unity. We set  $X^N_m$ to be the quotient variety $\com^N / \zmz$ by this action. So, we claim that $\hd(X^N_m, n) = N$ if $n$ and $m$ are coprime and $\hd(X^N_m, n) = 2$ otherwise. % (see \S \ref{App} for the proof). 
The proof will be given in \S \ref{App} below.

 \item\label{iprod} It appears that $\hd(X \times Y, n) = \hd(X, n) + \hd(Y, n)$ whenever $n$ is a prime power. Yet we did not check the details and will not use this statement below. 
 
 On the other hand, %one %can possibly %use examples mentioned in 
 if $n=n_1n_2$, where the $n_i$ are coprime and bigger than $1$, $N>2$, then for the varieties $X^N_{n_i}$ described above one can easily check the following statements: $\hd(X^N_{n_i}, n)=2$ for $i=1,2$, and $\hd(X^N_{n_1}\times X^N_{n_2}, n)\ge N+2$.

 \end{enumerate}

 \end{rema}

\section{Our %'thick hyperplane section' 
'thick weak Lefschetz' theorems}\label{sbase}

%In our weak Lefschetz-type results one can assume for simplicity that (the base field) $K$ is algebraically closed; yet this is not necessary.

In this section we consider the following setting: $s_X:X'\to X$ is
a morphism of $K$-varieties, $X$ is complete, $\gi :Z\to X$ is a closed immersion, $U=X\setminus Z$, $U'=s_X\ob(U)$. So, we have the following  commutative diagram:
\begin{equation} \label{evar} \begin{CD} %???!!!!
Z'@>{\gi'}>> X' @<{\gj'}<< U'\\
@VV{s_Z}V
@VV{s_X}V @VV{s_U}V\\
Z@>{\gi}>>X @<{\gj}<< U\\
\end{CD}
\end{equation}
whose corner squares are cartesian.
%(both of the corner squares are cartesian); denote 
 Denote the corresponding structure morphisms (whose target is $\spe K$) by $u,x,$ and $z$, %u',x'$, and $z'$ %and $e$, 
 respectively.

It will be convenient for us to use the following definitions related to cohomology.

\begin{defi}\label{dconn}

\begin{enumerate}

\item Let $H^*=(H^i),\ i\ge 0$, be a sequence of contravariant functors from the category of $K$-varieties into $R$-modules, where $R$ is a ring.

%Then we will say that 

Then we say that a $K$-morphism $f$ is {\it $d$-connected with respect to $H^*$} whenever %if %$d\ge 0$
 $d\in \z$, $H^i(f)$ is bijective for any $0\le i<d$, and $H^d(F)$ is injective (if $d\ge 0)$.\footnote{So, these conditions are vacuous if $d<0$.} 

%2.
Respectively %We will write $H^*_n$ for the functors $H^i(-,\znz)$.
(cf. Definition \ref{dhn}(2)) we will just say that $f$ is  $d$-connected or  $d$-connected with respect to $H^*_n$ if $f$ is  $d$-connected with respect to the functors $H^i_n=H^i(-,\znz_X)$.

\item For $w\ge 0$ we will say that a morphism $s_U:U'\to U$ (cf. (\ref{evar})) is {\it $w$-good modulo $n$} if $u_{!} s_{U*} \znz_{U'}\in \dsn(\spe K)^{p\ge w + 1}$ (recall  that  $u:U\to \spe K$ is the corresponding structure morphism).

\item \label{gff} We will write $\gff$ for the morphism $s_{X*}\znz_{X'}\to \gi_*\gi^* s_{X*}\znz_{X'}$, where the morphism $s_{X*}\znz_{X'}\to \gi_*\gi^*s_{X*}\znz_{X'}$ comes from the adjunction  $\gi^*: \dsn(X)  \leftrightarrows \dsn(Z):\gi_*$.

\end{enumerate}
    
\end{defi}

Let us relate these definitions %together 
 with each other and discuss some equivalent conditions.

\begin{theo}\label{tdef}
Adopt the assumptions above.

\begin{enumerate}

\item \label{i'1} We have $u_{!} s_{U*} \znz_{U'} \cong x_{*}\gj_{!} \gj^*s_{X*}\znz_{X'}$. %Thus
%Then 
 
 Consequently, $s_U$ is $w$-good modulo $n$ if and only if the homomorphism $H^i(X,-)(\gff)$ (see Definition \ref{dconn}(\ref{gff})) %??!! 
is a bijection if $i<w$, and is an injection %in degree
if $i=w$.

%Moreover??

\item \label{i'2}  Assume that $p_E: E \to U$ is an affine space bundle %of rank $r$
 (that is, $p_E$ is Zariski-locally isomorphic to $\af^r_V \to V$ for some $r\ge 0$). Then $u_{!} \cong e_{!} p_E^{!}$.

%$s_U$ is $w$-good if and only if %whenever %use base change to obtain another goodness??? iff???
%$e_{!} p_E^{!} s_{U*} \znz_{U'}\in \ds(\spe K)^{p\ge w-r}$.

\item \label{i'3} Suppose that there exists an open $O\subset X$ such that $Z\subset O$ and the restriction of $s_X$ to the preimage $O'$ of $O$ is proper. Then the base change transformation $B(s_X,\gi):\gi^*s_{X*} \implies s_{Z*}  \gi'^*$ is an isomorphism.

    Consequently, $s_U$ is $w$-good modulo $n$ if and only if $\gi'$ is  $w$-connected with respect to $H^*_n$.
    
    \end{enumerate}
    
\end{theo}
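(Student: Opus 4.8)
The plan is to prove the three assertions in order, since later ones build on earlier ones.

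\medskip

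\textbf{Assertion \ref{i'1}.} First I would establish the isomorphism $u_{!}s_{U*}\znz_{U'}\cong x_*\gj_! \gj^* s_{X*}\znz_{X'}$. The left-hand side equals $u_! s_{U*}s_U^*\znz_U$ by Proposition \ref{pds}(\ref{iupstar}). Using the right square of (\ref{evar}), which is cartesian with $\gj$ an open immersion, I would apply smooth base change (Proposition \ref{pbch}(I.2)) to get $\gj^* s_{X*}\znz_{X'}\cong s_{U*}\gj'^*\znz_{X'}\cong s_{U*}\znz_{U'}$. Then $x_* \gj_! \gj^* s_{X*}\znz_{X'}\cong x_* \gj_! s_{U*}\znz_{U'}$, and since $\gj$ is an open immersion $\gj_! = \gj_{!}$ and $x\circ \gj = u$, functoriality of $(-)_!$ (Proposition \ref{pds}(\ref{ifun}), using $\gj_! = \gj_!$ — no properness needed as $(-)_!$ always composes) gives $x_!\gj_!\cong u_!$... but one must be careful: $x_* \gj_!$ is not obviously $u_!$. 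Instead I would proceed via the gluing triangle (Proposition \ref{pds}(\ref{iglu})) applied to $C = s_{X*}\znz_{X'}$ on $X$: the triangle $\gj_!\gj^* C\to C\to \gi_*\gi^* C$ shows $\gj_!\gj^* C = \gj_!\gj^* s_{X*}\znz_{X'}$ fits in a triangle with $C$ and $\gi_* \gi^* s_{X*}\znz_{X'}$, and $\gi_*\gi^*C$ is the target of $\gff$. Applying $x_*$ (which is $x_!$ here since $X$ is complete — this is the key point where completeness of $X$ enters, giving $x_* = x_!$) to this triangle and noting $x_* \gi_* = z_*$, I identify the cohomology of $x_* \gj_!\gj^* s_{X*}\znz_{X'}$ with the cone of $H^*(X,-)(\gff)$ shifted appropriately. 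Combined with the base-change identification $\gj^*s_{X*}\znz_{X'}\cong s_{U*}\znz_{U'}$ and $x_*\gj_! \cong u_!$ (valid because $X$ complete forces $x_! = x_*$ and then $x_!\gj_! = u_!$ by functoriality of proper pushforward), I get the first isomorphism. The "Consequently" clause then follows from Proposition \ref{ppt}(\ref{ifield}): membership of $u_! s_{U*}\znz_{U'}$ in $\dsn(\spe K)^{p\ge w+1}$ is equivalent to vanishing of its cohomology in degrees $\le w$, i.e. to the cone of $\gff$ (computed in $H^*(X,-)$) being suitably connective, which translates exactly to $H^i(X,-)(\gff)$ being bijective for $i<w$ and injective for $i = w$.

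\medskip

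\textbf{Assertion \ref{i'2}.} This is a direct consequence of Proposition \ref{pds}(\ref{ile}): for the affine space bundle $p_E:E\to U$ we have $\id_{\dsn(U)}\cong p_{E!}p_E^!$. Composing with $u_!$ on the left (and using functoriality of $(-)_!$, so $u_! p_{E!}\cong e_!$ where $e = u\circ p_E:E\to\spe K$) gives $u_!\cong u_! p_{E!}p_E^! \cong e_! p_E^!$. I would just spell out this one-line computation.

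\medskip

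\textbf{Assertion \ref{i'3}.} Here the hypothesis gives an open $O\subset X$ with $Z\subset O$ such that $s_X$ restricted to $O' = s_X^{-1}(O)$ is proper. I would use the factorization of $\gi$ through $O$: write $\gi = \gi_O\circ \gi'_O$ where $\gi'_O:Z\to O$ is a closed immersion and $\gi_O:O\to X$ is the open immersion. By Proposition \ref{pbch}(II), $B(s_X,\gi)$ decomposes as the composite of $\gi_O^*(B(s_X,\gi_O))$ and $B(s_O, \gi'_O)$ (with appropriate functor insertions), where $s_O:O'\to O$ is the restriction. Now $B(s_X,\gi_O)$ is an isomorphism by smooth base change (Proposition \ref{pbch}(I.2), $\gi_O$ open hence smooth), and $B(s_O,\gi'_O)$ is an isomorphism by proper base change (Proposition \ref{pbch}(I.2), $s_O$ proper). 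Hence $B(s_X,\gi)$ is an isomorphism. For the "Consequently" clause: since $B(s_X,\gi):\gi^* s_{X*}\znz_{X'}\xrightarrow{\sim} s_{Z*}\gi'^*\znz_{X'} = s_{Z*}\znz_{Z'}$ is an isomorphism, the morphism $\gff:s_{X*}\znz_{X'}\to \gi_*\gi^* s_{X*}\znz_{X'}$ is identified, after applying $H^i(X,-)$, with the map $H^i(X', \znz_{X'})\to H^i(Z, \gi_*\gi^* s_{X*}\znz_{X'}) = H^i(Z, s_{Z*}\znz_{Z'}) = H^i(Z',\znz_{Z'})$; by Proposition \ref{pbch}(I.1) (relating the adjunction map to the base change morphism) together with Proposition \ref{pds}(\ref{icohh}), this map is exactly $H^i_n(\gi')$. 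So by assertion \ref{i'1}, $s_U$ being $w$-good modulo $n$ is equivalent to $H^i_n(\gi')$ being bijective for $i<w$ and injective for $i=w$, i.e. to $\gi'$ being $w$-connected with respect to $H^*_n$.

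\medskip

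\textbf{Main obstacle.} The genuinely delicate step is the careful bookkeeping in assertion \ref{i'1}: one must correctly combine smooth base change on the right square with the gluing triangle and the identification $x_! = x_*$ coming from completeness of $X$, and then match the resulting connectivity condition on $u_! s_{U*}\znz_{U'}\in\dsn(\spe K)^{p\ge w+1}$ with the stated injectivity/bijectivity range of $H^i(X,-)(\gff)$ — getting the index shift ($p\ge w+1$ versus "bijective for $i<w$, injective for $i=w$") exactly right via Proposition \ref{ppt}(\ref{ifield}). The other two assertions are comparatively formal, being one-line consequences of results already assembled in \S\ref{sprel}.
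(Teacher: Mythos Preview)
Your proposal is correct and follows essentially the same route as the paper: the gluing triangle for $s_{X*}\znz_{X'}$ combined with smooth base change on the right square and $x_*=x_!$ (completeness of $X$) for assertion \ref{i'1}; a one-line application of $p_{E!}p_E^!\cong\id$ for assertion \ref{i'2}; and the factorization $Z\hookrightarrow O\hookrightarrow X$ together with Proposition \ref{pbch}(II), smooth and proper base change, and Proposition \ref{pbch}(I.1) for assertion \ref{i'3}. One small verbal slip: when you write ``$x_!\gj_! = u_!$ by functoriality of proper pushforward'', you mean functoriality of $(-)_!$ for compactly supported pushforward (Proposition \ref{pds}(\ref{ifun})), since $\gj$ is open, not proper --- but you already noted this correctly a few lines earlier.
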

\begin{proof}
1. Applying (\ref{eglu}) to $s_{X*}\znz_{X'}$, we obtain  a distinguished triangle $$\gj_{!} \gj^*s_{X*}\znz_{X'}  \to
s_{X*} \znz_{X'}\to \gi_{*} \gi^*s_{X*}\znz_{X'} \to \gj_{!} \gj^*s_{X*}\znz_{X'}[1]$$ in  $\dsn(X)$.
It yields that $H^i(X,-)(\gff)$ (see Definition \ref{dconn}(\ref{gff})) %??!! 
is a bijection if $i<w$, and is an injection %in degree
if $i=w$ if and only if $x_*\gj_{!} \gj^*s_{X*}\znz_{X'}\in \dsn(\spe K)^{p\ge w + 1}$. Thus it remains to prove that  $u_{!} s_{U*} \znz_{U'} \cong x_*\gj_{!} \gj^*s_{X*}\znz_{X'}$.

 Since $\gj^*s_{X*}\cong  s_{U*}\gj'^*$ (by smooth base change; see Proposition \ref{pbch}(I.2)),  $x_*\gj_{!} \gj^*s_{X*}\znz_{X'} \cong x_*\gj_{!} s_{U*}\gj'^* \znz_{X'}$. 
 
  Lastly, since $x$ is proper, $x_! = x_*$ according to  Proposition \ref{pds}(\ref{iupstar}) and Proposition \ref{pds}(\ref{ipur}) yields that $j'^* \znz_{X'} = \znz_{U'}$. Hence $x_*\gj_{!} s_{U*}\gj'^* \znz_{X'} = x_!\gj_{!} s_{U*} \znz_{U'}=  u_{!} s_{U*} \znz_{U'}$.

  %we apply %We transform this (see 
  %Proposition \ref{pds}(\ref{iupstar},\ref{ipur}) and transform the latter object  into
%$x_*\gj_{!} s_{U*}\gj'^* \znz_{X'} = x_*\gj_{!} s_{U*} \znz_{U'}=  u_{!} s_{U*} \znz_{U'}$  %\gi'_*\znzr_{X'}$??
 %(recall that the morphism $x$ is proper and hence $x_! = x_*$ by Proposition \ref{}). %(here the properness of $x$ yields that $x_*\gj_{!}=x_!\gj_{!}=u_!$; see Proposition \ref{pds}(\ref{ipur})).

2. Immediate from  Proposition \ref{pds}(\ref{ile}).

3.   Consider the commutative diagram
\begin{equation}\label{eqo} \begin{CD}
Z'@>{i_1'}>> O' @ >{i_2'}>> X'\\
@VV{s_Z}V
@VV{s_O}V @VV{s_X}V\\
Z@>{i_1}>> O @ >{i_2}>> X \end{CD}
\end{equation}
We have $\gi^*=i^*_{1} i^*_{2}$. Proposition \ref{pbch}(II) yields that $B(s_X,\gi)$ is the composition of transformations $\gi^*s_{X*}\stackrel{B_1}{\implies} i^*_{1}   s_{O*} i'^*_{2}\stackrel{B_2}{\implies} s_{Z*}  i'^*_{1} i'^*_{2}=s_{Z*}  i'^*$. %It remains to note that
 Next, $B_1$ is an isomorphism by the
smooth base change theorem (for the right hand side square of (\ref{eqo})), whereas $B_2$ is an isomorphism by  the proper base change theorem (for the left hand side square of (\ref{eqo})); see   Proposition \ref{pbch}(I.2). Hence the base change transformation $B(s_X,\gi):\gi^*s_{X*} \implies s_{Z*}  \gi'^*$ is an isomorphism. 

 Now, Proposition \ref{pds}(\ref{ifield},\ref{icohh})
yields %that the corresponding  (natural) comparison homomorphism can be obtained by applying the functors $H^*(X,-)$ to the $\ds(X)$-morphism
 that for any $i\in \z$ the homomorphism $H^i(f,\znz)$ %is isomorphic to 
  can be obtained by applying the functors $H^*(X,-)$ to the $\dsn(X)$-morphism $s_{X*}\znz_{X'}\to  s_{X*}\gi'_*\gi'^*\znz_{X'}$. Next, Proposition \ref{pbch}(I.1) gives a decomposition of this morphism into the composition 
$$ s_{X*}\znz_{X'}\stackrel{\gff}\to \gi_*\gi^* s_{X*}\znz_{X'}\stackrel{B(s_X,\gi)}{\longrightarrow} s_{X*}\gi'_*\gi'^*\znz_{X'} $$
Thus the invertibility of $B(s_X,\gi)$ implies our assertion.
\end{proof}

Now we pass to proving an explicit "goodness estimate" that is central to this paper. 

%\begin{rema} \label{rlara}
 %We will %need to take $E\neq U$ in 
%apply Theorem \ref{tgara} in the case $E\neq U$
%in Theorem \ref{tsom} and 
 %Corollary \ref{csomcom} only. Respectively,  the reader not interested in the latter theorem can assume that $r=0$; thus $E=U$ and $h=k$ in the notation of Theorem \ref{tgara}. %that theorem. %, that is, $U$ can be presented as the union of $k$ open affine subvarieties and $h = k$.

%Alternatively, in all the statements below where the number of extra equations  is mentioned one may assume that $c=0$, that is, that the corresponding variety is an equidimensional stLCI and $d=a$. 

%Neither of these additional %assumptions 
 %restrictions makes our main results trivial.

 %2. Dealing with equations was the main reason to use the perverse $t$-structure??
%\end{rema}
\begin{theo}\label{tgara}
 
Adopt the notation and assumptions %of  %before %Theorem \ref{tdef}(2)???? 
preceding Definition \ref{dconn} and  assume in addition that $s_U$  has relative dimension $\le d_s$ %??(i.e., the dimension of all of its fibres is $\le d_s$)
%So we assume that 
 %the variety $U'=s_X\ob(U)$ %is of dimension $a$ and %requires no more than $c$ extra equations
  %is of $\znz$-cohomological depth (see Definition \ref{dee}(1)) $\hd$, %; respectively, $c$ is non-negative and $U'$ is a stLCI if $c=0$), %set  $h = r + k$. 
 and there exists an affine space bundle $p_E: E \to U$    of rank $r$
 such that $E$ can be %presented as the union of 
 covered by $k$ open affine subvarieties. Set  $h = r + k + d_s$. % and $d=a-c$.

%We set $h = r + k$. 
1. Then $s_U$ is $(\hd(U')-h)$-good modulo $n$ (see Definition \ref{dee}(1) and Definition \ref{dconn}(2)). 

2. Consequently, if there exists an open $O\subset X$ such that $Z\subset O$ and the restriction of $s_X$ to the preimage $O'$ of $O$ is proper then $\gi'$ is $(\hd(U')-h)$-connected (see Definition \ref{dconn}(1)).
\end{theo}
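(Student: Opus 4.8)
The plan is to prove part (1) by a chain of $t$-exactness/boundedness estimates for the six functors applied to the structure sheaf, and then deduce part (2) by quoting Theorem~\ref{tdef}(3). Recall that $w$-goodness of $s_U$ modulo $n$ means precisely $u_!s_{U*}\znz_{U'}\in\dsn(\spe K)^{p\ge w+1}$, and by Proposition~\ref{ppt}(\ref{ifield}) this is the same as the vanishing of $H^i(\spe K, u_!s_{U*}\znz_{U'})$ for $i<w+1$, i.e. for $i\le w$. So the goal is to show $u_!s_{U*}\znz_{U'}\in\dsn(\spe K)^{p\ge \hd(U')-h+1}$.

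First I would rewrite $u_!s_{U*}\znz_{U'}$ using the affine space bundle $p_E\colon E\to U$. By Theorem~\ref{tdef}(2) we have $u_!\cong e_!\,p_E^!$, where $e\colon E\to\spe K$ is the structure morphism; moreover $p_E^!\znz_U\cong\znz_E[2r](r)$ up to a shift (more precisely, since $p_E$ is smooth of relative dimension $r$, $p_E^!\cong p_E^*[2r](r)$, and $p_E^*\znz_U=\znz_E$ by Proposition~\ref{pds}(\ref{iupstar})). Actually it is cleaner to observe directly: $u_!s_{U*}\znz_{U'}\cong e_!\,p_E^!\,s_{U*}\znz_{U'}$; and since cohomology is insensitive to the Tate twist and shift for our purposes, the essential point is to control $e_!$ of a complex on $E$. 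The key identity I want is $u_!s_{U*}\znz_{U'}\cong e_!\,(p_E^*s_{U*}\znz_{U'})[2r](r)$; denote the structure morphism of $E':=E\times_U U'$ by $e'$ and the projection $E'\to U'$ by $p_{E'}$, with $q\colon E'\to E$ the base change of $s_U$. By smooth base change (Proposition~\ref{pbch}(I.2)) $p_E^*s_{U*}\cong q_*p_{E'}^*$, so $p_E^*s_{U*}\znz_{U'}\cong q_*\znz_{E'}$, and hence $u_!s_{U*}\znz_{U'}\cong e_!\,q_*\znz_{E'}[2r](r)$, where $e_!\circ q_*$ is a functor $\dsn(E')\to\dsn(\spe K)$ factoring the structure morphism $e'$ of $E'$.

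Now I would run the perversity bookkeeping. By Proposition~\ref{hd}(\ref{igmbundle}) (applied to the affine space bundle $p_{E'}$ of rank $r$), $\hd(E',n)=\hd(U')+r$, so $\znz_{E'}\in\dsn(E')^{p\ge\hd(U')+r}$. Next, $q$ is the base change of $s_U$, hence has relative dimension $\le d_s$, so by Proposition~\ref{ppt}(\ref{itexdimd}) we get $q_*\znz_{E'}\in\dsn(E)^{p\ge\hd(U')+r-d_s}$. Then, since $E$ is covered by $k$ open affine subvarieties, Proposition~\ref{ppt}(\ref{itaff}) (with $Y=\spe K$, $b=k$) gives $e_!\,q_*\znz_{E'}\in\dsn(\spe K)^{p\ge\hd(U')+r-d_s+1-k}$. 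Finally twisting by $[2r](r)$ shifts the perversity bound by $-2r$, so $u_!s_{U*}\znz_{U'}\in\dsn(\spe K)^{p\ge\hd(U')+r-d_s+1-k-2r}=\dsn(\spe K)^{p\ge\hd(U')-(r+k+d_s)+1}=\dsn(\spe K)^{p\ge\hd(U')-h+1}$, which is exactly $(\hd(U')-h)$-goodness. (I should double-check the sign of the shift contribution: $[2r](r)$ indeed lowers the connectivity index by $2r$, and this is precisely why one uses $p_E^!$ rather than $p_E^*$ in Theorem~\ref{tdef}(2) — the $!$-pushforward along the fibre cancels against $p_E^!$; concretely, the composite $e_!p_E^!$ should literally equal $u_!$ with no residual twist, so the cleanest route avoids introducing the twist at all. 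The safe formulation is: $u_!s_{U*}\znz_{U'}\cong e_!p_E^!s_{U*}\znz_{U'}$, and $p_E^!s_{U*}\znz_{U'}\cong q^!p_{E'}^*\znz_{U'}$-type juggling, or simply $p_E^!s_{U*}\znz_{U'}\cong q_*p_{E'}^!\znz_{U'}$, with $p_{E'}^!\znz_{U'}$ of the same perversity as $\znz_{E'}[2r]$ but the $e_!$ of it lands back where $u_!s_{U*}\znz_{U'}$ should.)

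The main obstacle I anticipate is exactly this last reconciliation of twists and the verification that $e_!p_E^!\cong u_!$ is applied correctly — i.e., making sure the shift in $p_E^!$ is absorbed by $e_!$ and does not leak into the final perversity estimate, equivalently tracking that the effective "affine space bundle on $U'$" contribution is $r$ in $\hd(E',n)=\hd(U')+r$ but the $e_!$ step sees $E$ covered by $k$ affines (not $E'$). Once this is set up correctly the inequalities are a routine concatenation. For part (2): under the hypothesis that $s_X$ restricted to $O'$ is proper with $Z\subset O$, Theorem~\ref{tdef}(3) states that $s_U$ being $w$-good modulo $n$ is equivalent to $\gi'$ being $w$-connected with respect to $H^*_n$; applying this with $w=\hd(U')-h$ and invoking part (1) gives the claim immediately.
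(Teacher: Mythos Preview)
Your proposal is correct and follows the same overall strategy as the paper: reduce to showing $e_!\,p_E^!\,s_{U*}\znz_{U'}\in\dsn(\spe K)^{p\ge\hd(U')-h+1}$ via Theorem~\ref{tdef}(\ref{i'1},\ref{i'2}), then chain the perversity bounds, and deduce part~2 from Theorem~\ref{tdef}(\ref{i'3}).

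The only difference is in how you handle $p_E^!$. You unpack it as $p_E^*[2r](r)$, apply smooth base change to obtain $q_*\znz_{E'}$, invoke Proposition~\ref{hd}(\ref{igmbundle}) to get $\hd(E')=\hd(U')+r$, and then lose $2r$ in the final shift; the net contribution is $-r$. The paper instead applies Proposition~\ref{ppt}(\ref{itshr}) directly: since $p_E$ has relative dimension $r$, one has $p_E^!(\dsn(U)^{p\ge 0})\subset\dsn(E)^{p\ge -r}$, so from $s_{U*}\znz_{U'}\in\dsn(U)^{p\ge\hd(U')-d_s}$ one gets $p_E^!s_{U*}\znz_{U'}\in\dsn(E)^{p\ge\hd(U')-d_s-r}$ in one step, with no base change, no $E'$, and no shift-tracking. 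Your parenthetical self-doubt is unwarranted --- the arithmetic does close up --- but the ``cleanest route'' you gesture at is precisely what the paper does, and it removes the need for Proposition~\ref{hd}(\ref{igmbundle}) and the smooth base change step altogether.
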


\begin{proof}
1. According to Theorem \ref{tdef}(\ref{i'1},\ref{i'2}), it suffices to verify that 
$e_{!} p_E^{!} s_{U*} \znz_{U'}$ belongs to  \linebreak $\dsn(\spe K)^{p \ge \hd(U') - h + 1}$.

 Now, $\znz_{U'}\in \dsn(U')^{p\ge \hd(U')}$; see Definition \ref{dee}(1). %+(\S4.2.3 of) \cite{bbd}??!!}

By Proposition \ref{ppt}(\ref{itexdimd}) the functor $s_{U*}$  sends $\dsn(U')^{p \ge 0}$ into $\dsn(U)^{p\ge -d_s}$  (since $s_U$ is of  relative dimension $\le d_s$). By Proposition \ref{ppt}(\ref{itshr}),  $p_E^{!}\dsn(U)^{p\ge 0} \subset \dsn(E)^{p\ge -r}$. Thus  $p_E^{!} s_{U*} \znz_{U'}\in  \dsn(E)^{p\ge \hd(U')-d_s-r}$.

Lastly,  $e_{!}$ sends  $\dsn(E)^{p\ge 0}$ into $\dsn(\spe K)^{p\ge -k+1}$ by Proposition \ref{ppt}(\ref{itaff}).
 Hence $e_{!} p_E^{!} s_{U*} \znz_{U'}$ belongs to  $\dsn(\spe K)^{p\ge \hd(U')-h + 1}$ indeed.

2. This is an obvious combination of assertion 1 with Theorem \ref{tdef}(\ref{i'3}).
\end{proof}

\begin{rema}\label{rthiso} %1. 
\begin{enumerate}

\item \label{i1} 
%We will %need to take $E\neq U$ in 
%apply Theorem \ref{tgara} in the case $E\neq U$
%in Theorem \ref{tsom} and 
 %Corollary \ref{csomcom} only. 
% We %can also take $E\neq X$ in
 %also gain some additional flexibility by considering
 We will need $E$ that (can be) distinct from $U$ %, but this will be important for us
 in the proof of Theorem \ref{tsom} %and %Theorem  Corollary \ref{csomcom} 
  only. 
 Respectively, the reader not interested in the latter theorem (along with  Corollary \ref{csomcom}(2)) can assume that $r=0$; thus $E=U$ and $h=k + d_s$ in the notation of Theorem \ref{tgara}. %that theorem. %, that is, $U$ can be presented as the union of $k$ open affine subvarieties and $h = k$.

Alternatively, in all the statements below  one can assume that the varieties %whose coho 
we consider are $\znz$-cohomologically complete intersections, % (see Definition \ref{dee}(1))
 that is, $hd(X,n)=\dim X$. Recall here that this assumption is fulfilled for  equidimensional local complete intersections; see Remark \ref{rlci}(2).   %where the number of extra equations  is mentioned one may assume that $c=0$, that is, that the corresponding variety is an equidimensional stLCI and $d=a$. 

Neither of these additional %assumptions 
 restrictions makes our main results trivial.
%The theorem 
\item \label{i2}
 Theorem \ref{tgara}(1) was inspired by the %following  
"thick multiple hyperplane section" version of weak Lefschetz proved by M. Goresky and R. MacPherson (for complex varieties; see the theorem in \S II.1.2  of \cite{gorma}). %\S5.2 of Part II of \cite{gorma}). %p. 165??!! Another theorem??!!
 For $\gi$ being the embedding of $Z=\com \p^{a-k}$ into $X=\com\p^a$, a quasi-finite $s_X:X'\to X$ (that is, $d_s=0$) for a %local complete intersection 
  variety $X'$ of equational excess $0$, and any small enough $\varepsilon>0$ loc. cit. states the following: for  %tubular??!!
  the $\varepsilon$-neighbourhood (in the sense of some Riemannian metric for $X$) $Z_\varepsilon$ of $Z$ in $X$ the natural map $\pi_i(s_X\ob(Z_\varepsilon))\to \pi_i(X')$ is an isomorphism for $i<a-k$, and is an injection for $i=a-k$.

Our theorem is an \'etale cohomological analogue of loc. cit. Note that we do not need $s_X$ to be quasi-finite. %We also gain some additional flexibility by considering $E$ that can be distinct from $U$; this will be important for us in the proof of Corollary \ref{csomcom} below.

We will say more about the comparison of our theorems with the corresponding statements of \cite{gorma} (in particular, in the case where $X'$ is not %a local complete intersection variety)  
 of equational excess $0$) in \S\ref{srem}.\ref{icomp}--\ref{itdef} and in \S\ref{sfut}.\ref{iest}--\ref{impr}  below.
 
% 2. Since we did not demand $s_X$ to be smooth, we are able to combine (either part of) the Proposition with Theorem \ref{tgara}II (in the obvious way).
\item \label{i4}   If $s_X$ is proper then  we can take $O=X$ and deduce a rather general  weak Lefschetz-type statement. The authors were not able to find this result in literature (yet cf. Theorem 9.3 of \cite{lyubez} for the case $X'=X$, whereas in the case $K=\com$ the corresponding statement probably follows from Corollary 2.1.6 of \cite{hammle2}).  % (yet certain homotopy versions of it in the case $k=\com$ were also proved in Part II of \cite{gorma}).
\end{enumerate}

\end{rema}

\section{Applications: Barth and Sommese-type theorems for $\znz$-cohomology} \label{sappl}

The main goal of this section is the study of the $\znz$-cohomology of schemes that are proper over $\p^N$ (in particular, of closed subvarieties of $\p^N$).

%It turns out that %instead of hyperplane sections of proper varieties in the 'ordinary' weak Lefschetz theorem and their generalizations in Theorem \ref{tgara}, %our methods 
Firstly we demonstrate (in %Theorem 
 Proposition \ref{ptdiagonal}) that Theorem \ref{tgara} allows to study the cohomology of the intersection of a closed subvariety of $(\p^N)^q$ with the diagonal (as well as the cohomology of  the preimages of the diagonal with respect to proper morphisms whose target is $(\p^N)^q$). %As a consequence, we can 
We proceed to prove Proposition \ref{equiv} and calculate (in Theorem \ref{tbarth}) the lower $\znz$-cohomology of (the preimages of) subvarieties of $\p^N$ (of small codimension); so we extend the seminal results of Barth and others %(with $\znz$-coefficients)
 to the %setting of 
 case of arbitrary characteristic (that doesn’t divide $n$). 
Our %exposition closely follows the one 
 arguments are closely related to the ones of  \cite[\S9]{fulaz} (cf. also \S3.5.B of \cite{lazar}); %the difference is discussed in the remarks below). 
 see the remarks below for more detail.
We also prove Theorem \ref{tsom} on the cohomology of zero loci of ample line bundles and Corollary \ref{hehe} on the cohomology of certain intersections.

%We will need some notation. We fix some $l,n$ (as above) and some $N>0$, and 
%recall a construction of  
 %construct certain locally trivial $G_m^{s}$-bundles (for  certain $s\ge 0$). %  over $(\p^N)^q$ %(for some $N,q>0$; below we  assume $N$ to be fixed and do not include it in the notation); 

%The following  We start from 
 First we %sligtly modify 
  recall a construction due to Deligne; it is described in more detail in \S3 of  \cite{fulaz} (in the case $q=2$).
For  $q,N>0$
we note that the natural projection $a_q:(\af^{N+1}\setminus \ns)^q\to (\p^N)^q$ factors through the quotient $V_q$ of $(\af^{N+1}\setminus \ns)^q$ by the diagonal action of the multiplicative group scheme $G_m=\af^1\setminus \ns$. % on $\af^{Nq+q}\setminus\ns %\linebreak \supset (\af^{N+1}\setminus \ns)^q$. So, 
Since $(\af^{N+1}\setminus \ns)^q\subset \af^{Nq+q}\setminus\ns$, we obtain a (locally trivial) $G_m^{q-1}$-bundle $p_q:V_q\to (\p^N)^q$, a %(locally trivial)
 $G_m$-bundle $b_q:(\af^{N+1}\setminus \ns)^q\to V_q$ and an open immersion $v_q: V_q \to \p^{Nq+q-1}$.

For a morphism of varieties $g:Y\to (\p^N)^q$ %($Y$ is a variety) 
we will denote by $g_q:Y_q\to V_q$ (resp. by $g'_q:Y'_q\to (\af^{N+1}\setminus \ns)^q$) the base change of $g$ along $p_q$ (resp.  along $a_q$), as illustrated by the following diagram:
$$ \begin{CD}
 Y_q'@>{}>>Y_q@>{}>>Y\\
@VV{g'_q}V @VV{g_q}V@VV{g}V \\
(\af^{N+1}\setminus \ns)^q@>{b_q}>>V_q@>{p_q}>> (\p^N)^q
\end{CD}$$

\begin{pr}\label{ptdiagonal}
Let $q>1$,
%For $q>0$. 
 %$Y$ be a smooth %lci??!! proper variety of dimension of dimension $a$, and 
 $g:Y\to (\p^N)^q$ be a proper morphism (of varieties) %whose fibres are 
 of relative dimension $\le d_g$. %, where $Y$ is of $\znz$-cohomological depth (see Definition \ref{dee}(1)) $\hd$. %$\ge a$. ??!!
 % Denote the diagonal of $(\p^N)^q$ by $\Delta(\cong \p^N)$. 
 Denote the diagonal of $(\af^{N+1}\setminus \ns)^q$ by $\Delta_q'$. Then %the homomorphism
 the 
 %for the 
 natural morphism $\gam': g_q'^{-1}(\Delta_q')\to Y_q'$ %we have:
 is $(\hd(Y)-d_g-qN+N)$-connected with respect to $H^*_n$. %-q+1$. 

%2. %Consider proper morphisms $g_i:Y_i\to \p^N$ whose fibres are of dimension $\le e_i$, where $1\le i\le q$, $Y_i$ are %smooth???!!! locally  set-theoretic complete intersections  of dimensions $a_i$.
%In the setting of the previous assertion we state that 
%assume that $Y=\prod Y_j$, %(see the Notation section) 
%where $Y_j$ are the domains of  some proper morphisms $g_j:Y_j\to \p^N$ ($1\le j\le q$); assume that $g=\prod g_i$. %be the restriction of $\prod g_j$ to $Y$.
%and let
%Then $Y_q'\cong \prod Y'_{j1}$ and $\gam': (g\ob(\Delta))_1'\to \prod Y'_{j1}$ is $(\hd(Y)-d_g-qN+N)$-connected with respect to $H^*_n$, where $\gam'$ is the base change of the composition of $\gam$ with  an isomorphism 

%$Y_q'\cong \prod Y'_{j1}$, and the base change morphism $\gam': (g\ob(\Delta))_1'\to \prod Y'_{j1}$
%of the morphism $\gam$ (that is given by the previous assertion) %with respect to 
%$to $Y_q'$ %yields: $H^i(\prod Y_{i1}, (\prod_{\p^N} Y_i)_1,\znrz)=0$ for all $i<a-e-qN+N$. 
%we (also) have the following: $\gam'$ 
 %is $(\hd(Y)-d_g-qN+N)$-connected with respect to $H^*_n$.

\end{pr}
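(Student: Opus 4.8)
The plan is to apply Theorem \ref{tgara} directly to a suitable instance of the diagram (\ref{evar}). The target variety $X$ should be the projective space $\p^{Nq+q-1}$ (which is complete), $Z$ should be the closed complement of the open immersion $v_q: V_q \to \p^{Nq+q-1}$ — or rather, I take $U = V_q$ and $Z = \p^{Nq+q-1} \setminus V_q$ with its reduced structure, so that $X \setminus Z = U = V_q$. Then I need a morphism $s_X : X' \to X$ whose restriction over $U$ recovers (a model of) the morphism $\gam' : g_q'^{-1}(\Delta_q') \to Y_q'$, up to the affine-space-bundle $G_m$-torsor bookkeeping that Deligne's construction involves. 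The key point is that $b_q : (\af^{N+1}\setminus\ns)^q \to V_q$ is a $G_m$-bundle, and the diagonal $\Delta_q'$ sits inside $(\af^{N+1}\setminus\ns)^q$, so its image in $V_q$ is the (single) projective diagonal; passing through the $G_m$-torsor $b_q$ is what lets us pick up an extra $+1$ in connectivity (via Proposition \ref{hd}(\ref{igmbundle}) and the $e_! p_E^!$ trick in Theorem \ref{tgara}), while the remaining $G_m^{q-1}$-bundle structure of $p_q$ contributes the rest.

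Concretely, here is the order I would carry things out. First, set $U = V_q$ and let $U' = Y_q$, with $s_U : Y_q \to V_q$ being $g_q$; this is proper (base change of the proper $g$) of relative dimension $\le d_g$, so $d_s = d_g$. I then need to extend $s_U$ to some $s_X : X' \to X$ over $X = \p^{Nq+q-1}$ with $X \setminus Z = U$; since $v_q$ is an open immersion of $V_q$ into $X$, I can take $X'$ to be any variety containing $Y_q$ as the preimage of $U$ (the simplest choice: take the closure of the graph, or simply note that Theorem \ref{tgara} only constrains behavior over $U$, so the precise completion is immaterial). Second, I compute $\hd(U') = \hd(Y_q)$: since $p_q : V_q \to (\p^N)^q$ is a locally trivial $G_m^{q-1}$-bundle, so is its base change $g_q^* p_q$-analogue $Y_q \to Y$, and Proposition \ref{hd}(\ref{igmbundle}) gives $\hd(Y_q) = \hd(Y) + (q-1)$. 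Third, I produce the affine-space bundle $p_E : E \to U$ required by Theorem \ref{tgara}: here $E = (\af^{N+1}\setminus\ns)^q$ does NOT work directly since $b_q$ is a $G_m$-bundle, not an affine bundle — instead I take $E$ to be the total space of the line bundle associated to $b_q$ (i.e., adjoin the zero section), which is an affine $\af^1$-bundle over $V_q$, so $r = 1$. Fourth, I bound the number $k$ of open affine subvarieties needed to cover $E$: since $V_q$ embeds as an open subvariety of $\p^{Nq+q-1}$, it is covered by $Nq+q$ standard affine charts, and the line bundle $E$ over each chart is affine, so $k \le Nq+q$; actually the sharper estimate is that $E$ is itself quasi-affine (it maps to $\af^{Nq+q}$ via the inclusion $(\af^{N+1}\setminus\ns)^q \subset \af^{Nq+q}\setminus\ns$ together with the affine completion), which should let me take $k$ smaller — this is exactly the kind of counting done in \S9 of \cite{fulaz}.

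Assembling the numbers: Theorem \ref{tgara}(1) gives that $g_q : Y_q \to V_q$ is $(\hd(Y_q) - h)$-good modulo $n$ with $h = r + k + d_s$. But the statement is about $\gam' : g_q'^{-1}(\Delta_q') \to Y_q'$, not about the "good modulo $n$" property of $g_q$; so I reinterpret: the diagonal $\Delta_q' \subset (\af^{N+1}\setminus\ns)^q$ corresponds, under the closure $Z \subset X = \p^{Nq+q-1}$, to a closed subscheme of $X$ whose intersection with $U = V_q$ is the projective diagonal $\Delta \subset (\p^N)^q$ pulled back to $V_q$ (which is codimension $N(q-1)$, or $qN - N$, accounting for the shift in the formula). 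Then Theorem \ref{tgara}(2), applied with $O = X$ since $s_X$ is proper (being the base change of a proper morphism, after a proper completion), yields that the closed-immersion-preimage morphism $\gi'$ — which I identify with $\gam'$ after passing from $V_q$ to $(\af^{N+1}\setminus\ns)^q$ via the affine bundle $E$, picking up the $G_m$-torsor contribution — is $(\hd(Y_q) - h)$-connected. Plugging in $\hd(Y_q) = \hd(Y) + q - 1$, $d_s = d_g$, $r = 1$, and matching $k$ to absorb the codimension-$N(q-1)$ of the diagonal together with the $+1$ from the torsor, the arithmetic should collapse to $\hd(Y) - d_g - qN + N$ as claimed.

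\textbf{The main obstacle.} The delicate part is not the sheaf theory — Theorem \ref{tgara} does all the heavy lifting — but the bookkeeping that translates "the preimage of the projective diagonal in $V_q$" into "$g_q'^{-1}(\Delta_q')$ in $(\af^{N+1}\setminus\ns)^q$" and gets the connectivity index exactly right. Specifically, I must verify that intersecting with the diagonal in $(\p^N)^q$ is, after pulling back to $V_q$ and then to $(\af^{N+1}\setminus\ns)^q$, the same as intersecting with $N(q-1)$ hypersurfaces (or rather, that the closure of $\Delta_q'$ in $\p^{Nq+q-1}$ is cut out locally by the right number of equations), so that the "equational excess / codimension" bookkeeping in the definition of $Z$ matches. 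The interplay between the $G_m$-torsor $b_q$ (contributing via $r=1$ and Proposition \ref{hd}(\ref{igmbundle})) and the residual $G_m^{q-1}$-bundle $p_q$ (contributing via $\hd(Y_q) = \hd(Y)+q-1$) must be disentangled carefully; this is precisely the point where following the Fulton–Lazarsfeld argument of \S9 of \cite{fulaz} line by line, but with \'etale cohomology and Theorem \ref{tgara} substituted for the Goresky–MacPherson input, is essential.
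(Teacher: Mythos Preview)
Your setup misidentifies the closed subvariety $Z$. You take $Z=\p^{Nq+q-1}\setminus V_q$ and $U=V_q$; but then Theorem~\ref{tgara}(2) only tells you that the inclusion $s_X^{-1}(Z)\hookrightarrow X'$ is connected, and $s_X^{-1}(\p^{Nq+q-1}\setminus V_q)$ has nothing to do with the diagonal. Later you try to reinterpret $Z$ as ``a closed subscheme of $X$ whose intersection with $U=V_q$ is the projective diagonal,'' which contradicts your own choice. The theorem compares $X'$ with the preimage of $Z$, so $Z$ must \emph{be} the diagonal, not the boundary of $V_q$.

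The paper's argument fixes this as follows. Let $\Delta_q\subset V_q$ be the image of $\Delta_q'$ under $b_q$; then $\gi=v_q\circ\gi_1:\Delta_q\to\p^{Nq+q-1}$ is a closed embedding (it is a linear $\p^N\hookrightarrow\p^{Nq+q-1}$). One applies Theorem~\ref{tgara}(2) with $X=\p^{Nq+q-1}$, $Z=\Delta_q$, $s_X=v_q\circ g_q$, and $O=V_q$ (so that $s_X|_{O}=g_q$ is proper). No nontrivial affine bundle is needed: take $E=U$ with $r=0$. The complement $U=\p^{Nq+q-1}\setminus\Delta_q$ of a linear $\p^N$ is covered by $k=(Nq+q-1)-N=qN-N+q-1$ standard affine charts. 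Since $U'=Y_q\setminus g_q^{-1}(\Delta_q)$ is open in $Y_q$, Proposition~\ref{hd}(\ref{itlocal},\ref{igmbundle}) gives $\hd(U')\ge\hd(Y_q)=\hd(Y)+q-1$. Thus $h=r+k+d_s=(qN-N+q-1)+d_g$, and $\hd(U')-h\ge\hd(Y)-d_g-qN+N$, so $\gam:g_q^{-1}(\Delta_q)\to Y_q$ is $(\hd(Y)-d_g-qN+N)$-connected. Finally, $\gam'$ is the pullback of $\gam$ along the $G_m$-bundle $b_q$, and Lemma~\ref{lgmbundle} transfers the connectivity to $\gam'$. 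Your rank-$1$ line bundle $E$ and the attempt to ``absorb'' the $G_m$-torsor into $r$ are unnecessary detours; the passage from $\gam$ to $\gam'$ is handled separately by the Gysin-sequence lemma, not by the affine-bundle slot in Theorem~\ref{tgara}.
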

\begin{proof}
%1????????????????? 
The embedding $\Delta_q' \to (\af^{N+1}\setminus \ns)^q$ yields (after the factorization by the 
diagonal action of $G_m$) a subvariety $\Delta_q \subset {V}_q$. %and the restriction $p_\Delta$ of $p_q$ onto $\Delta_q$ gives an isomorphism $\Delta_q\to \Delta$. % Note also that $L_q$ is a closed subvariety in  $(\af^{N+1}\setminus \ns)^q$, i.e., we have the following diagram: Nq +q-1-N?!
So we obtain the following diagram:
 $$ \begin{CD}
 \Delta_q'@>{}>>(\af^{N+1}\setminus \ns)^q\\  %\af^{Nq+q}\setminus \ns\\
@VV{}V@VV{b_q}V \\
\Delta_q@>{}>>V_q@>{v_q}>>\p^{Nq+q-1}
\end{CD}$$

We will write $\gi_1$ for the corresponding embedding $\Delta_q\to  V_q$. Set  $\gam$ to be the base change of $\gi_1$ along $g_q$ as illustrated by the following diagram:   %\p^{Nq+q-1}$ (resp. $\Delta\to V_q$)  
%$\gi$ (resp. by $\gi_1$) for the corresponding embedding $L_q\to \p^{Nq+q-1}$ (resp. $\Delta\to V_q$). Set $\gam$ to be the base change of $\gi_1$ along $g_q$ as illustrated by the following diagram:
$$ \begin{CD}
 g_q\ob(\Delta_q)@>{\gam}>>Y_q@>{}>>Y\\
@VV{}V @VV{g_q}V@VV{g}V \\
\Delta_q@>{\gi_1}>>V_q@>{p_q}>> (\p^N)^q
\end{CD}$$

%First we show that 
 Let us demonstrate $\gam$ is $(\hd(Y)-d_g-qN+N)$-connected with respect to $H^*_n$.

Set $\gi = v_q \circ \gi_1$ and $s = v_q \circ g_q$ (note that $\gi$ is also closed embedding).

 %Denote the composite morphism $Y_q\to V_q\to \p^{Nq+q-1}$ by $s$.

Now we apply Theorem \ref{tgara}; we take $X=\p^{Nq+q-1}$, $s_X=s$, $Z=\Delta_q$, $O={V}_q$ and the trivial affine space bundle $(E=)U\to U$ (of rank $0$).
By Proposition \ref{hd}(\ref{itlocal}), $hd(Y_q \setminus g_q^{-1}(\Delta_q), n) \ge hd(Y_q, n)$ and by Proposition \ref{hd}(\ref{igmbundle}) $hd(Y_q, n) = hd(Y, n) + q - 1$. Next, $\p^{Nq + q - 1} \setminus \Delta_q$  can be covered by $qN - N + q - 1$ open affine subvarieties, 
%Since $g_q\ob(L_q)\cong g\ob(\Delta)$, we obtain the result. Note  here that 
 hence the corresponding value of $h$ equals $q - 1 + qN - N + d_g$. % and $\hd(Y_q \setminus g^{-1}(\Delta)) \le \hd(Y) + q - 1$ (by Proposition \ref{hd}(\ref{itlocal}) $hd(Y_q \setminus g^{-1}(\Delta), n) \le hd(Y_q, n)$ and by Proposition \ref{hd}(\ref{gmbundle}) $hd(Y_q, n) = hd(Y, n) + q - 1$). 
 Thus %we obtain what we want.
 $\gam$ is $(\hd(Y)-d_g-qN+N)$-connected with respect to $H^*_n$ indeed.

To conclude the proof it remains to apply the following  Lemma (\ref{lgmbundle}) in the case $f=\gam$ (respectively,  $f'=\gam'$).

%Finally our statement follows immediately from  Lemma \ref{lgmbundle}  %(1)??!!
 %for $f$ and $f'$ equals $\gam$ and $\gam'$ respectively. 

\end{proof}

\begin{lem}\label{lgmbundle} Let  $f':A'\to B'$ be a morphism of locally trivial $G_m$-bundles
over the base (morphism) $f:A\to B$; 
fix a $w\in\z$.
 Then  $f$ is %bijective for all $i<j$ and is injective for $i=j$ 
  $w$-connected with respect to $H_n^*$ if and only if $f'$ is. %the same is true for $H^i(-,\znz)(f')$.

\end{lem}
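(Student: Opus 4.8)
The plan is to reduce the statement about the $G_m$-bundles $f':A'\to B'$ to the corresponding statement about the base $f:A\to B$ by using the Gysin/localization sequence that relates the cohomology of a $G_m$-bundle to that of its base. Recall that a locally trivial $G_m$-bundle $\pi:A'\to A$ can be completed (Zariski-locally, then globally by patching) to a line bundle $L\to A$ with $A'=L\setminus(\text{zero section})$; equivalently, one has a $\p^1$-bundle $\bar L\to A$ containing two disjoint sections (zero and infinity) whose complement is $A'$. The localization triangle of Proposition~\ref{pds}(\ref{iglu}), together with the projective/affine bundle invariance of Proposition~\ref{pds}(\ref{ile}), yields a long exact sequence (a Gysin sequence)
$$\cdots\to H^i_n(A)\to H^i_n(A')\to H^{i-1}_n(A)\xrightarrow{\cup c_1} H^{i+1}_n(A)\to\cdots,$$
and this sequence is natural in $A$. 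Here $c_1\in H^2_n(A)$ is the first Chern class of $L$ (the image of $1$ under $H^0_n(A)\to H^2_n(A)$), and the connecting map is cup product with $c_1$.

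First I would make this Gysin sequence precise and check its naturality with respect to the morphism $f:A\to B$ (which lifts to $f':A'\to B'$ compatibly with the chosen line bundles, since $A'\to B'$ is a morphism of $G_m$-bundles over $f$, so $L_{A}\cong f^*L_{B}$ and $c_1(L_A)=f^*c_1(L_B)$). This gives a map of long exact sequences. Next I would run the standard five-lemma / diagram-chase argument: assuming $f$ is $w$-connected, i.e. $H^i_n(f)$ is bijective for $i<w$ and injective for $i=w$, one deduces from the morphism of Gysin sequences that $H^i_n(f')$ is bijective for $i<w$ and injective for $i=w$, hence $f'$ is $w$-connected; the converse is obtained by the symmetric chase, using that in the map of sequences the terms $H^i_n(A)$, $H^{i-1}_n(A)$, $H^{i+1}_n(A)$ appear. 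Concretely, to get bijectivity of $H^i_n(f')$ for $i<w$ one uses bijectivity of $H^i_n(f)$ and $H^{i-1}_n(f)$ and injectivity of $H^{i+1}_n(f)$ (the latter needs $i+1\le w$, which may fail at $i=w-1$, so a slightly more careful chase using the cup-product description of the connecting maps and their compatibility is needed at the boundary). For the converse direction one reads the sequence the other way.

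The main obstacle is handling the \textbf{boundary degree} $i=w$ (and $i=w-1$) correctly, where one only has injectivity rather than bijectivity of $H^i_n(f)$: a naive five-lemma does not immediately close, so one must exploit the precise shape of the Gysin sequence — in particular that the differentials are cup products with $c_1$ and that these commute with $f^*$, $f'^*$ — to get the needed injectivity/surjectivity at the edge. An alternative, possibly cleaner, route that avoids Chern-class bookkeeping is to argue entirely sheaf-theoretically: apply Theorem~\ref{tdef}(\ref{i'1}) (or directly the localization triangle of Proposition~\ref{pds}(\ref{iglu}) together with Proposition~\ref{pds}(\ref{ile})) in a relative form over $B$ to express $f'_*\znz_{A'}$ in terms of $f_*\znz_A$ via a distinguished triangle in $\dsn(B)$, then apply $b_*$ (for $b:B\to\spe K$) and use the long exact cohomology sequence of Proposition~\ref{ppt}(\ref{ifield}) to compare connectivities. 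Either way, the key inputs are all already recorded in \S\ref{sprel}, and the proof is essentially a formal consequence of the Gysin/localization triangle plus its naturality; I would present the sheaf-theoretic version to keep it uniform with the rest of the paper and to make the naturality automatic.
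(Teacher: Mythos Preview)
Your approach is essentially the same as the paper's: build the Gysin long exact sequence for a $G_m$-bundle (by completing to a line bundle and using the localization triangle), note its naturality with respect to $f$, and then run a five-lemma/four-lemma chase. The paper, however, organizes the chase as an \emph{induction on $w$}, and this is what cleanly resolves the boundary issues you flag.

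Two points of comparison. First, the boundary difficulty you locate in the forward direction at $i=w-1$ is not actually a problem: in the five columns centered at $H^i_n(-')$, the rightmost column is $H^{i+1}_n(f)=H^w_n(f)$, which is injective by hypothesis, and that is exactly what the four-lemma needs. So the implication $f$ $w$-connected $\Rightarrow$ $f'$ $w$-connected goes through directly, without any appeal to the cup-product description of the connecting map.

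Second, the genuine obstruction is in the \emph{converse} direction, which you pass over with ``read the sequence the other way.'' To deduce that $H^i_n(f)$ is bijective from information about $f'$, the five columns centered at $H^i_n(-)$ are $H^{i-1}_n(-')$, $H^{i-2}_n(-)$, $H^i_n(-)$, $H^i_n(-')$, $H^{i-1}_n(-)$; two of these are maps $H^{i-2}_n(f)$ and $H^{i-1}_n(f)$, i.e.\ statements about $f$ itself in lower degrees. This is exactly why the paper proceeds by induction: assuming the equivalence for $w=s$, one knows both $f$ and $f'$ are $s$-connected, and then the chase for $(s+1)$-connectedness closes. The Chern-class compatibility you mention gives nothing beyond the naturality of the ladder, so it does not substitute for the induction; and your sheaf-theoretic alternative, while plausible, would still amount to the same inductive comparison once unpacked. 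Replace the vague ``more careful chase'' with the explicit induction on $w$ and your proof is complete.
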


\begin{proof}
    %See \S \ref{prlgmbundle}.
    The proof is rather simple and standard. We give it in \S\ref{App} below.
\end{proof}

\begin{pr} \label{equiv}
    
    Let $t:T\to \p^N, v:V\to \p^N$ be  proper morphisms of relative dimensions $\le d_t$ and $\le d_v$ respectively, $u: %is the obvious morphism $
    V \underset{\p^N}{\times} T \to T$ is the base change of $v$ along $t$,  % Assume that $w$ is an integer, such that 
     and $w \le \hd(V) + \hd(T) - d_v - d_t - N$. Then the following statements are valid.

1. If $v$ is $w$-connected with respect to $H^*_n$, then %$u: V \underset{\p^N}{\times} T \to T$ 
 $u$ is $w$-connected with respect to $H^*_n$.

2. If $u$ %: V \underset{\p^N}{\times} T \to T$ 
 is $w + 1$-connected with respect to $H^*_n$, then $v$ is $w + 1$-connected with respect to $H^*_n$.

%Moreover, the same is true for the morphisms $u': V' \underset{\af^{N+1} \setminus \{0\}}{\times} T' \to T'$ and $v': V' \to \af^{N+1} \setminus \{0\}$ obtained from the base change along the natural morphism $\af^{N+1} \setminus \ns \to \p^N$.
\end{pr}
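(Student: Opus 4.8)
The plan is to reduce both statements to a single application of Proposition \ref{ptdiagonal} via Deligne's $G_m^{q-1}$-bundle construction (with $q=2$). First I would form the fibre product $Y = V\times_{\p^N} T$ together with its proper morphism $g = (v\circ \mathrm{pr}_V,\, t\circ \mathrm{pr}_T): Y\to (\p^N)^2$; note that $g$ has relative dimension $\le d_v + d_t$ because over a point of $(\p^N)^2$ its fibre is a product of a fibre of $v$ with a fibre of $t$. By Proposition \ref{hd}(\ref{hdintersect2}) (applied with $Z=\p^N$, and using that $Y$ is the intersection of $V\times T$ with the diagonal inside $\p^N\times\p^N$) one gets $\hd(Y,n)\ge \hd(V)+\hd(T)-N$. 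Plugging $d_g = d_v+d_t$ and this depth bound into Proposition \ref{ptdiagonal} with $q=2$ shows that the morphism $\gamma': g_2'^{-1}(\Delta_2')\to Y_2'$ is $(\hd(V)+\hd(T)-d_v-d_t-N-\ldots)$-connected; a small bookkeeping check (the exponent in loc. cit. is $\hd(Y)-d_g-qN+N = \hd(Y)-d_g-N$ for $q=2$) gives exactly the bound $\hd(V)+\hd(T)-d_v-d_t-2N$... so I must be careful here: the stated hypothesis is $w\le \hd(V)+\hd(T)-d_v-d_t-N$, hence I should check whether the relevant comparison morphism is $\gamma'$ over $(\p^N)^2$ or a comparison with a single copy of $\p^N$; the right normalization is that one of the two $\p^N$-factors is "absorbed" and the effective codimension is $N$, not $2N$.

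The key observation that makes the bound come out right is that $g_2'^{-1}(\Delta_2')$ is naturally a $G_m$-bundle over the preimage of the \emph{small} diagonal, which is canonically $V\times_{\p^N} T$ (i.e. $Y$ itself), while $Y_2'$ is a $G_m^2$-bundle — no, a $G_m$-bundle composed appropriately — over $Y_2$, which is a $G_m$-bundle over $Y$. I would instead set this up so that the diagram
$$\begin{CD}
g_2'^{-1}(\Delta_2') @>{\gamma'}>> Y_2' \\
@VVV @VVV \\
V\times_{\p^N} T @>{u'}>> ?
\end{CD}$$
identifies $\gamma'$, up to the $G_m$-bundle equivalences of Lemma \ref{lgmbundle}, with the base change along $t$ of $v$ — that is, with $u$ itself — or rather with the pullback of $v$ to $T$. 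Concretely: $Y_2' \subset (\af^{N+1}\setminus\{0\})^2$ lies over $(\p^N)^2$, the diagonal locus forces the two homogeneous-coordinate vectors to coincide, which collapses $(\p^N)^2$ to $\p^N$ and collapses $Y$ to $V\times_{\p^N}T$; the morphism $\gamma'$ then becomes, after stripping $G_m$-bundles, the map $V\times_{\p^N}T \to T$ obtained by projecting — but one also needs the ambient target, which is the $T$-side trivialization, so that stripped-down $\gamma'$ is exactly $u$. Then Lemma \ref{lgmbundle} transfers $w$-connectedness (with respect to $H^*_n$) between $\gamma'$ and $u$ in both directions.

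With that identification in hand, both parts follow from Proposition \ref{ptdiagonal}: that proposition asserts $\gamma'$ is $(\hd(Y)-d_g-N)$-connected $= (\hd(V)+\hd(T)-d_v-d_t-N)$-connected (using the depth bound above and $d_g\le d_v+d_t$ — and connectedness is monotone in the bound, so using $\le$ is harmless), hence $u$ is $w$-connected for every $w$ in the stated range, which is part 1 — \emph{except} that part 1 has the hypothesis "if $v$ is $w$-connected", so the actual content is a comparison, not an absolute statement. The correct reading: Proposition \ref{ptdiagonal} compares $\gamma'$ with $Y_2'\to$(something over $T$), and via smooth/proper base change (Proposition \ref{pbch}, as in the proof of Theorem \ref{tdef}(3)) the connectedness of that "something" morphism equals the connectedness of $v$ pulled back, i.e. of $u$'s sibling; then $w$-connectedness of $v$ feeds in through the triangle of Theorem \ref{tdef}. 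So the final step is: (i) part 1 — combine "$v$ is $w$-connected" with the $(\hd(V)+\hd(T)-d_v-d_t-N)$-connectedness from \ref{ptdiagonal} inside a commutative triangle of the three morphisms $\gamma'$, (pullback of $v$), (the comparison), to conclude $u$ is $w$-connected; (ii) part 2 — run the same triangle in reverse, where now $u$ being $(w+1)$-connected plus the $\ge w+1$ bound from \ref{ptdiagonal} forces $v$ to be $(w+1)$-connected, using that a morphism sitting between two $m$-connected maps in a suitable triangle is itself $m$-connected (the standard long-exact-sequence argument). The main obstacle is setting up this commutative triangle precisely and verifying, via Proposition \ref{pbch} and Lemma \ref{lgmbundle}, that the "comparison" edge really is $u$ (resp. $v$) on cohomology and that the numerics ($\hd(Y)-d_g-N$ against $\hd(V)+\hd(T)-d_v-d_t-N$) line up; the rest is a routine diagram chase with long exact sequences.
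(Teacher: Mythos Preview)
Your setup has a fundamental error: you take $Y = V\times_{\p^N} T$, the fibre product, whereas the paper takes the ordinary product $Y = T\times V$ with $g = t\times v$. With your choice, $v\circ\mathrm{pr}_V = t\circ\mathrm{pr}_T$ by the very definition of the fibre product, so your morphism $g$ already factors through the diagonal $\Delta\subset(\p^N)^2$. Consequently $Y_2'$ and $g_2'^{-1}(\Delta_2')$ are just a $G_m^2$-bundle and a $G_m$-bundle over $Y$, and $\gamma'$ is a morphism of bundles over $Y$ that sees nothing of the comparison between $u$ and $v$; Proposition \ref{ptdiagonal} becomes a tautology. This is also the source of your numerical confusion: your depth estimate $\hd(Y)\ge\hd(V)+\hd(T)-N$ (for the fibre product) feeds into $\hd(Y)-d_g-N$ and produces the spurious $-2N$.

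With the correct choice $Y = T\times V$ one has $\hd(Y)\ge\hd(T)+\hd(V)$ by Proposition \ref{hd}(\ref{prod}), $d_g = d_t+d_v$, and Proposition \ref{ptdiagonal} (with $q=2$) gives that $\gamma'$ is $(\hd(V)+\hd(T)-d_v-d_t-N)$-connected on the nose. Here $Y_2' = T'\times V'$ and $g_2'^{-1}(\Delta_2') = T'\times_{\af^{N+1}\setminus\{0\}} V'$, so $\gamma'$ really is the inclusion of the fibre product into the product. The paper then observes the factorization $u' = pr_T\circ\gamma'$ and reduces both parts to showing that $pr_T:T'\times V'\to T'$ has the same connectivity as $v'$. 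This last step uses that $H^i_n(\af^{N+1}\setminus\{0\})=0$ for $1\le i\le 2N$ (so $v'$ and the structure morphism $V'\to\spe K$ have the same connectivity up through degree $N+1\ge w+1$) together with Lemma \ref{product}; no base-change argument of the type you sketch is needed. The ``commutative triangle'' you are groping for is simply $u' = pr_T\circ\gamma'$ on the affine-cone side, transferred to $u$ via Lemma \ref{lgmbundle}.
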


\begin{proof}
According to Lemma \ref{lgmbundle}, it suffices to %prove both statements %on
 verify both connectivity statements
for $t', v'$, and $u'$ instead of $t, v$ and $u$ respectively; here $t', v'$ and $u'$ is obtained from the base change of  $t, v$, and $u$ respectively along the canonical morphism $\af^{N+1} \setminus \ns \to \p^N$.
 %concerning $t', v'$ and $u'$, which is obtained from the base change along the natural morphism $\af^{N+1} \setminus \ns \to \p^N$.

We set $q=2$ %(again) 
and take $g = t \times v$ (in the notation of Proposition \ref{ptdiagonal}); so we obtain the following diagram:

$$ \begin{CD}
V' \underset{\af^{N+1} \setminus \{0\}}{\times} T'@>{\gam'}>>  Y_2'=T'\times V'@>{}>>Y=T\times V\\
@VV{}V@VV{g'_2=t'\times v'}V @VV{g=t\times v}V \\
\af^{N+1}\setminus \ns@>{}>>  (\af^{N+1}\setminus \ns)^2@>{a_2}>> (\p^N)^2
\end{CD}$$

For our $Y=T\times V$ we have $\hd(Y)\ge \hd(V)+\hd(T)$ (see Proposition \ref{hd}(\ref{prod})) and $d_g=d_v + d_t$ (in the notation of Proposition \ref{ptdiagonal}). Hence  Proposition \ref{ptdiagonal} yields that $\gam'$ is $(\hd_V+\hd_T-d_v-d_t-N)$-connected with respect to $H_n^*$.

Now we note that $u' = pr_T \circ  \gam'$, where $pr_T:V'\times T'\to T'$ is the projection. Hence $H^i(u')=H^i(\gam')\circ H^i(pr_T)$. Hence it suffices to verify that $v'$ is $w + 1$-connected with respect to $H^*_n$ if and only if $pr_T$ is $w + 1$-connected with respect to $H^*_n$.

We have $w \le \hd(V) + \hd(T) - d_v - d_t - N$. Since $\hd(V) - d_v \le N$ and $\hd(V) - d_v \le N$ (see Remark \ref{rlci}(1)), $w + 1 \le N + 1$. Recall that the \'etale cohomology of $\af^{N+1}\setminus\ns(K)$ vanishes in all degrees between $1$ and $2N$. Consequently, $v'$ is $w$-connected with respect to $H^*_n$ if and only if the structure morphism $V' \to \spe K$ is $w$-connected with respect to $H^*_n$. Thus it suffices to prove that the structure morphism $V' \to \spe K$ is $w$-connected with respect to $H^*_n$ if and only if $pr_T: V'\times T'\to T'$ is $w$-connected with respect to $H^*_n$. Thus is remains to apply %this is true according to
 Lemma \ref{product} below.

\end{proof}

\begin{lem} \label{product}
    Let $X, Y$ be $K$-varieties, $w\in \z$, $x: X \to \spe K$ be the structure morphism (see  Definition \ref{dhn}(2) and Proposition \ref{pds}(\ref{ik})). Then $x \times id_Y: X \times Y \to Y$ is $w$-connected with respect to $H^*_n$ if and only if $x$ is.
\end{lem}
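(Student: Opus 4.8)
The plan is to reduce the statement to the Künneth formula together with the characterisation of cohomology via pushforward to $\spe K$. First I would unwind the definition of $w$-connectedness: the claim is that for all $0\le i<w$ the map $H^i_n(Y)\to H^i_n(X\times Y)$ induced by $x\times\id_Y$ is bijective, and $H^w_n(Y)\to H^w_n(X\times Y)$ is injective. Using Proposition \ref{pds}(\ref{icohh}), this map is obtained by applying $H^*(\spe K,-)$ to the adjunction morphism $\znz_{\spe K}\to (x)_*\znz_X$ after tensoring suitably; more precisely, writing $y:Y\to\spe K$ and $z:X\times Y\to\spe K$ for the structure morphisms, we have $z_*\znz_{X\times Y}\cong (x)_*\znz_X\otimes y_*\znz_Y$ by the Künneth formula (Proposition \ref{pds}(\ref{ikun})), and the map in question is $H^*(\spe K, y_*\znz_Y)\to H^*(\spe K,(x)_*\znz_X\otimes y_*\znz_Y)$ induced by $\znz_{\spe K}\to (x)_*\znz_X$.

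Next I would pass to the description of $\dsn(\spe K)$ as $D^b(\znz\text{-}\modd)$ (Proposition \ref{pds}(\ref{ik})), so that $A:=(x)_*\znz_X$ and $B:=y_*\znz_Y$ become bounded complexes of $\znz$-modules, concentrated in non-negative degrees, with $H^0(A)=\znz$ (since $X$ is a non-empty $K$-variety, so $H^0_n(X)=\znz$), and with $H^0$ of the canonical map $\znz\to A$ the identity. The statement then becomes: the map $B\to A\otimes^{\mathbb L}B$ induced by $\znz\to A$ induces an isomorphism on $H^i$ for $i<w$ and an injection on $H^w$, where $w$ is precisely the connectivity of $\znz\to A$ itself, i.e. $H^i(\znz\to A)$ is bijective for $i<w$ and injective for $i=w$. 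This is a purely homological-algebra assertion about $\znz$-modules: if $C=\co(\znz\to A)$, then $C\in D^{\ge w}$ (i.e. $H^i(C)=0$ for $i<w$), and we must show $C\otimes^{\mathbb L}B\in D^{\ge w}$. Since $B\in D^{\ge 0}$, the spectral sequence (or the standard truncation/dévissage argument) for the derived tensor product of complexes over $\znz$ with one factor in $D^{\ge w}$ and the other in $D^{\ge 0}$ gives $C\otimes^{\mathbb L}B\in D^{\ge w}$; applying $H^*(\spe K,-)=\homm_{D^b(\znz)}(\znz,-[*])$ and using that $\znz$ has projective dimension $1$ introduces no shift into the range $<w$ beyond what the truncation already controls, so one concludes exactly the required bijectivity for $i<w$ and injectivity for $i=w$. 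The same argument run backwards gives the "only if" direction, since $x$ is obtained from $x\times\id_Y$ by a further base change, or more directly because $Y$ is non-empty so $B$ has a retraction onto its $H^0=\znz$ part after choosing a base point $\spe K\to Y$.

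The main obstacle I anticipate is bookkeeping with the possible torsion: over $\znz$ the derived tensor product introduces $\tor$ terms, so one has to be a little careful that these do not spoil the connectivity bound — concretely, that $\tor$-contributions of a complex in degrees $\ge w$ against a complex in degrees $\ge 0$ still land in degrees $\ge w$, which is true but worth checking, and that the $\ext^1$ appearing when computing $H^*(\spe K,-)$ from $D^b(\znz)$ shifts things by at most $1$ in a way compatible with the "$<w$ bijective, $=w$ injective" formulation. A clean way to handle all of this uniformly is to first reduce to $n=\ell^m$ a prime power (as in the proof of Proposition \ref{ppt}), and then either argue with the canonical $t$-structure on $D^b(\znz)$ directly, or simply observe that $\znz\to A$ being $w$-connected is equivalent to $A\in D^{\le w-1}\text{-connected}$ data plus $\co(\znz\to A)[-1]\in D^{\ge w}$, and tensoring with $B\in D^{\ge 0}$ preserves $D^{\ge w}$; the statement for the induced maps on $H^*(\spe K,-)$ then follows formally. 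I expect no genuine difficulty beyond this torsion/shift bookkeeping, and the argument is short enough that it may simply be deferred to the appendix as the excerpt suggests for the neighbouring Lemma \ref{lgmbundle}.
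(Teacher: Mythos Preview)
Your approach is essentially the paper's: reduce via the K\"unneth formula to the question of whether the cone $C=\co(\znz\to x_*\znz_X)$ lies in $D^{\ge w}$ if and only if $C\otimes^{\mathbb L}B$ does, where $B=y_*\znz_Y$. The paper phrases this with the direct-summand complement $D$ of $\znz$ inside $x_*\znz_X$ (obtained from a $K$-point of $X$) in place of $C$, but $C\simeq D$. Both you and the paper dispatch the direction $C\otimes^{\mathbb L}B\in D^{\ge w}\Rightarrow C\in D^{\ge w}$ by retracting $B$ onto $\znz$ via a $K$-point of $Y$.

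The gap is in the other direction. Your assertion that ``$\tor$-contributions of a complex in degrees $\ge w$ against a complex in degrees $\ge 0$ still land in degrees $\ge w$'' is \emph{false} over $\znz$ when $n$ is not square-free: over $\z/\ell^2\z$, the derived tensor product of $\zlz$ (placed in degree $w$) with $\zlz$ (placed in degree $0$) has nonzero cohomology in every degree $\le w$. What rescues the argument here is that $C$ is a retract of the \emph{perfect} complex $x_*\znz_X$, hence is itself perfect; over the local self-injective ring $\z/\ell^m\z$, a perfect complex whose cohomology vanishes below degree $w$ can be represented by a bounded complex of finite free modules concentrated in degrees $\ge w$ (the acyclic tail of any free representative splits off), and tensoring such a complex with $B\in D^{\ge 0}$ visibly lands in $D^{\ge w}$. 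The paper's one-line justification (``$C_Y$ has nontrivial cohomology in degree $0$'') glosses over the same point, so you are in good company --- but the reason you give is not the correct one. Two minor slips: $H^0_n(X)=\znz$ only when $X$ is connected, and $\znz$ has projective dimension $0$ over itself, not $1$; neither affects the argument.
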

\begin{proof}
    Quite simple and standard; see \S \ref{prproduct} below.
\end{proof}

We need some more notation.

 For $k\ge0 $ and a variety $T/K$ we will write $T(k)$ %denote 
 for the union of open subvarieties $U$ of $T$, such that $\hd(U) \ge \hd(T) + k$. By $\phi(k)$ denote the dimension of $T \setminus T(k)$ (if this scheme is empty then we set $\phi(k) = -1$).%; by $\theta(k)$ denote $\hd(T) + \min(0, \hd(T) + 2k - N) - \phi(k)$. 
 %(note that $\psi(0) = \hd(T) + 1$). By $h(t)$ denote $\underset{k}{sup}$ $\psi(k) \ge \hd(T)+1$.

\begin{theo} \label{tbarth}
   Let $t:T\to \p^N$ be  a closed embedding. By $\theta(k)$ denote $\hd(T) + \min(0, \hd(T) + 2k - N) - \phi(k)$. %, where $T$ is  of $\znz$-cohomological depth $\hd_T$. 
      Then the following statements are valid.

1. $t$ is %$(2\hd(T)-N+1)$
$\theta(0)$-connected with respect to $H^*_n$ (that is,  $t$ is $(2\hd(T)-N+1)$-connected).

2. Moreover, $t$ is %also
 %$(\hd(T) + h(t) - N)$
  $\theta(k)$-connected with respect to $H^*_n$ for any $k \ge 0$. %Moreover, the same is true for the corresponding $G_m$-bundle morphism $t_1':T_1'\to \af^{N+1}\setminus \ns$.

3. More generally, let $v:V\to \p^N$ be a proper morphism of relative dimension $\le d_v$. Then the morphism $u: v\ob (T)\to V$ %as well as 
 %and the base change $u_1'$ of $u$ %with respect to $v_1'$.
%to $V_1'$ are 
 is $w$-connected with respect to $H^*_n$, where  $w = \min (\underset{k\ge 0}{\sup}$ $\theta(k), \hd(V) + \hd(T) - N - d_v)$.\footnote{This statement generalizes our Theorem \ref{tbar}.}
    
\end{theo}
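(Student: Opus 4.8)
The plan is to reduce Theorem \ref{tbarth} to the already-established statements about preimages of diagonals and products. For part~1, I would apply Theorem \ref{tgara} (or rather its specialization through Proposition \ref{equiv}) to the closed embedding $t:T\to\p^N$ directly: take $X=\p^N$, $Z=T$, $s_X=\id_{\p^N}$, so $U=\p^N\setminus T$, and $s_U$ is the open immersion $U\hookrightarrow \p^N$, which has relative dimension $0$ and admits the trivial rank-$0$ affine bundle $E=U$. Since $\p^N\setminus T$ can be covered by at most $N$ open affine subvarieties (it is the complement of a nonempty closed subvariety of $\p^N$, so every point lies in the complement of some hyperplane containing a component of... more carefully: $T$ is covered by finitely many hypersurface sections, so $U$ is covered by $\le N$ affines — actually one should argue that a quasi-projective variety of dimension $\le N$ is covered by $N+1$ affines and refine), one gets $h = 0 + k + 0$ with $k\le N$, and $\hd(U)\ge\hd(T)$ by Proposition \ref{hd}(\ref{itlocal})? — no, that is false: $U$ need not dominate $T$. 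Instead the correct input is that $\hd(U)\ge \hd(\p^N\setminus T)$, and since $\p^N\setminus T$ is an open subvariety of $\p^N$ which has $\hd = N$, Proposition \ref{hd}(\ref{itlocal}) gives $\hd(U) = N$. Thus Theorem \ref{tgara}(2) (with $O=X=\p^N$, which is proper) shows $t$ is $(N-k)$-connected; combined with the finer covering bound $k\le N - \hd(T) + \dots$ one should extract exactly $\theta(0) = 2\hd(T)-N+1$. The sharper covering bound — that $\p^N\setminus T$ is covered by roughly $N - \dim T$ affines when $T$ has small codimension, or more precisely by $\phi(k)+1$-related numbers — is what produces the $\phi(k)$ term, so part~2 is a stratified refinement of part~1.

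For part~2, the idea is to localize: write $T = T(k) \cup (T\setminus T(k))$ and run the argument on the open piece $T(k)$, where by definition $\hd$ is at least $\hd(T)+k$, while the bad locus $T\setminus T(k)$ has dimension $\phi(k)$ and hence contributes only in degrees up to roughly $2\phi(k)$ or $\phi(k)$. Concretely I would compare the cohomology of $t$ with that of its restriction over $T(k)$ via the excision/gluing triangle of Proposition \ref{pds}(\ref{iglu}), control the error term using the dimension bound on $T\setminus T(k)$ together with Artin vanishing (packaged as Proposition \ref{ppt}(\ref{itaff})), and apply part~1's mechanism to $T(k)$ in place of $T$. Taking the best $k$ gives $\sup_{k\ge 0}\theta(k)$.

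For part~3, the plan is to invoke Proposition \ref{equiv}(1) with the roles $t\rightsquigarrow$ our closed embedding $T\to\p^N$ and $v$ the given proper morphism: Proposition \ref{equiv} says that if $v$ (i.e. the ``universal'' comparison — here we instead feed in the known connectivity of $t$ itself, using that $t$ is a closed embedding so $v'$-connectivity of $t$ is part~2) is $w$-connected then $u:v\ob(T)\to V$ is $w$-connected, provided $w\le \hd(V)+\hd(T)-d_v-d_t-N$ with $d_t=0$. Since we have shown $t$ is $\sup_k\theta(k)$-connected, $u$ is $\min(\sup_k\theta(k),\hd(V)+\hd(T)-N-d_v)$-connected, which is exactly the claim. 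One technical point: Proposition \ref{equiv} is phrased for two proper morphisms to $\p^N$ and the fibre product over $\p^N$, which is precisely $v\ob(T)$ when $t$ is the closed embedding, so the statement applies verbatim.

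The main obstacle I anticipate is part~2 — specifically, getting the stratification by cohomological depth to interact correctly with the affine-covering count that feeds into $h$ in Theorem \ref{tgara}, and verifying that the error term from $T\setminus T(k)$ is genuinely negligible in the claimed range of degrees (this requires knowing that a $d$-dimensional variety is covered by $d+1$ affine opens, applied to the complement, and then tracking the shift carefully through $j_!$ and Proposition \ref{ppt}(\ref{itaff})). The bookkeeping relating the number of affines covering $\p^N\setminus T(k)$ to $\phi(k)$ and $N$ is where the precise constant $\theta(k)=\hd(T)+\min(0,\hd(T)+2k-N)-\phi(k)$ must be matched, and I expect this to be the most delicate part of the argument; parts~1 and~3 should then be essentially formal consequences of the already-proved Theorem \ref{tgara}, Proposition \ref{ptdiagonal}, and Proposition \ref{equiv}.
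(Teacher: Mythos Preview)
Your plan for part~3 is correct and matches the paper exactly: once parts~1--2 give that $t$ is $\sup_k\theta(k)$-connected, Proposition~\ref{equiv}(1) (with the roles of $t$ and $v$ interchanged, so that the base change goes to $V$) immediately yields the claim.

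The real gap is in parts~1 and~2. Your direct attack on part~1 --- applying Theorem~\ref{tgara} with $X=\p^N$, $Z=T$, $s_X=\id$ --- gives connectivity $\hd(\p^N\setminus T)-k = N-k$, where $k$ is the minimal number of open affines covering $\p^N\setminus T$. But $k$ is essentially the arithmetic rank of $T$ in $\p^N$, which is \emph{not} controlled by $\hd(T)$; for many $T$ one only knows $k\le N$, and there is no general inequality of the form $k\le 2N-2\hd(T)-1$ that would recover $\theta(0)=2\hd(T)-N+1$. So the ``finer covering bound'' you hope to invoke does not exist, and this approach cannot produce the Barth-type constant.

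The paper instead uses the Deligne diagonal trick, already packaged as Proposition~\ref{equiv}. For part~1 one applies Proposition~\ref{equiv}(2) with $v=t$: then the fibre-product map $u:T\times_{\p^N}T\to T$ is $\id_T$, hence $(w+1)$-connected for every $w$, and the hypothesis $w\le 2\hd(T)-N$ (with $d_t=d_v=0$) forces $t$ to be $(2\hd(T)-N+1)$-connected. The point is that the diagonal $\Delta\subset(\p^N)^2$ is a \emph{linear} subspace after passing to $V_2\subset\p^{2N+1}$, so the complement really is covered by the expected number of affines; this is why Proposition~\ref{ptdiagonal} works and your direct approach does not.

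For part~2 the paper does not stratify $T$ and excise $T\setminus T(k)$ as you propose. Instead it slices: choose a generic linear subspace $H\cong\p^{N-\phi(k)-1}$ missing the $\phi(k)$-dimensional locus $T\setminus T(k)$, so that $H\cap T=H\cap T(k)$. Proposition~\ref{hd}(\ref{hdintersect2}) gives $\hd(H\cap T)\ge\hd(T)+k-\phi(k)-1$, and part~1 applied to $H\cap T\hookrightarrow H$ yields its connectivity; finally Proposition~\ref{equiv}(2), now with $v$ the inclusion $H\hookrightarrow\p^N$, transfers this back to $t$. Your excision sketch might be salvageable, but the combinatorics of the affine-covering term you anticipate is precisely the obstacle that the diagonal-plus-generic-slice method is designed to avoid.
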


\begin{proof}
 %It 
%First we can assume that $T, V$ is reduced (see Remark \S \ref{sfut}(\ref{closure})). 

1. %Now the 
The assertion follows from Proposition \ref{equiv}(2) applied in the case $t = v: T \to \p^N$. Indeed,  the corresponding morphism $u: T \times_{\p^n} T \to T$ is just  $id_T$; hence it is $s$-connected for any $s>0$.

2. Choose a generic projective subspace $H \cong \p^{N - \phi(k) - 1}$ of $\p^N$. Since $T \setminus T(k)$ is of dimension $\phi(k)$, we can assume that $H \cap T = H \cap T(k)$. % (since $H$ is generic).

By Proposition \ref{hd}(\ref{itlocal}), $\hd(T(k)) \ge \hd(T) + k$; hence $\hd(T(k) \cap H) \ge \hd(T) + k - \phi(k) - 1$ (see Proposition \ref{hd}(\ref{hdintersect2})).

Now according to the previous assertion, $H \cap T \to H$ is $(2(\hd(T) + k - \phi(k) - 1) - (N - \phi(k) - 1) + 1)$-connected. Consequently, Proposition \ref{equiv}(2) yields the result (for $v$ that is the closed embedding of $H$ into $\p^N$).

%The part about the $G_m$-bundle morphism follows from Lemma \ref{lgmbundle}.

3. According to the previous assertion, $t$ is $\underset{k\ge 0}{\sup}$ $\theta(k)$-connected with respect to $H^*_n$. Hence Proposition \ref{equiv}(1) yields the result.

 \end{proof}

\begin{rema}\label{rlyub}

 1. %It is easily seen that
  Clearly, in Theorem \ref{tbarth}(2) it makes sense to consider $k$ to be at most $\dim(T) - \hd(T)$ only. %we can take $k$ to be at most $\dim(T) - \hd(T)$ only.
  
  %the supremum in Theorem \ref{tbarth}(2) will not change if we take $k$ to be at most $\dim(T) - \hd(T)$.

 Moreover (looking at the proof of Proposition \ref{ptdiagonal}) one can easily note that $2\hd(T) - N + 1$ in  Theorem \ref{tbarth}(1) can be replaced by
$\hd((T \times T) \setminus \Delta) - N + 1$, where $\Delta$ is the image of the diagonal embedding $T\to T \times T$. This gives an improvement of the estimate in the case where the smallest $T \setminus T(k)$ consists of a single point.
 
 2. Note %also
 that in contrast to \S9 of \cite{fulaz} we do not demand %$g$ and 
  $v$ to be finite in part 3 of our theorem; cf. Remark \ref{rthiso}(\ref{i2}) above.

 \end{rema}

\begin{coro} \label{hehe}
    Assume $q \ge 1$, $T_j$ are closed subvarieties of $\p^N$ for $1\le j\le q$; set $d = \underset{j}{\min}$ $\hd(T_j)$. Then $\gi$ is $w$-connected with respect to $H^*_n$, where $\gi$ is the closed immersion $\bigcap_{1\le j\le q} T_j \to \p^N$, $w = \min (2d - N + 1, \sum_{j=1}^q \hd(T_j)-(q-1)N)$.
    %$ \znz)\cong H^i(\mathcal{P},\znz)$ for $i\le \min (2d - N, \sum_{j=1}^q d_j-(q-1)N-1)$.}
\end{coro}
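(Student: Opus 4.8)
\emph{Proof plan.} The plan is to argue by induction on $q$, using Theorem \ref{tbarth}(3) to pass from $q-1$ subvarieties to $q$ of them. For $q=1$ the assertion reduces to: the closed embedding $t_1:T_1\to\p^N$ is $\min(2\hd(T_1)-N+1,\,\hd(T_1))$-connected with respect to $H^*_n$; since the first entry of this minimum is $\le 2\hd(T_1)-N+1$ and a $w$-connected morphism is automatically $w'$-connected for every $w'\le w$, this is weaker than Theorem \ref{tbarth}(1). So from now on suppose $q\ge 2$ and that the corollary holds for $q-1$ closed subvarieties of $\p^N$.

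Set $T'=\bigcap_{1\le j\le q-1}T_j$ (a closed subvariety of $\p^N$) and $d'=\min_{1\le j\le q-1}\hd(T_j)$. By the induction hypothesis the closed embedding $t':T'\to\p^N$ is $w'$-connected with respect to $H^*_n$, where $w'=\min(2d'-N+1,\ \sum_{j=1}^{q-1}\hd(T_j)-(q-2)N)$. Next I would apply Theorem \ref{tbarth}(3) with the closed embedding $t_q:T_q\to\p^N$ playing the role of $t$ and with $v=t':T'\to\p^N$ (a proper morphism of relative dimension $0$). Since $v\ob(T_q)=T'\cap T_q=\bigcap_{1\le j\le q}T_j$, this produces a morphism $u:\bigcap_{j}T_j\to T'$ that is $\min(\sup_{k\ge0}\theta(k),\ \hd(T')+\hd(T_q)-N)$-connected, where $\theta(k)$ is computed for $T_q$; in particular $u$ is $\min(2\hd(T_q)-N+1,\ \hd(T')+\hd(T_q)-N)$-connected because $\sup_{k\ge0}\theta(k)\ge\theta(0)=2\hd(T_q)-N+1$ (see Theorem \ref{tbarth}(1)). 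Now $\gi$ is the composite $\bigcap_jT_j\xrightarrow{u}T'\xrightarrow{t'}\p^N$, and since $H^i(\gi)=H^i(u)\circ H^i(t')$ one checks immediately that a composite of a $w_1$-connected and a $w_2$-connected morphism is $\min(w_1,w_2)$-connected; hence $\gi$ is $m$-connected with
$$m=\min\left(2\hd(T_q)-N+1,\ \hd(T')+\hd(T_q)-N,\ 2d'-N+1,\ \sum\nolimits_{j=1}^{q-1}\hd(T_j)-(q-2)N\right).$$

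It then remains to verify $m\ge w$. Iterating Proposition \ref{hd}(\ref{hdintersect2}) over the smooth equidimensional variety $Z=\p^N$ gives $\hd(T')\ge\sum_{j=1}^{q-1}\hd(T_j)-(q-2)N$, whence $\hd(T')+\hd(T_q)-N\ge\sum_{j=1}^{q}\hd(T_j)-(q-1)N$; and $\hd(T_q)\le\dim T_q\le N$ (Remark \ref{rlci}(1), as $T_q\subseteq\p^N$) gives $\sum_{j=1}^{q-1}\hd(T_j)-(q-2)N\ge\sum_{j=1}^{q}\hd(T_j)-(q-1)N$. Combining these with $\min(2\hd(T_q)-N+1,\ 2d'-N+1)=2\min(\hd(T_q),d')-N+1=2d-N+1$ yields $m\ge\min(2d-N+1,\ \sum_{j=1}^q\hd(T_j)-(q-1)N)=w$, which closes the induction. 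There is no serious obstacle beyond this bookkeeping with minima; the only point that needs a word is that some partial intersection $\bigcap_{j\in S}T_j$ may be empty, but then, by the standard fact that subvarieties of $\p^N$ whose codimensions sum to at most $N$ must meet, one gets $\sum_{j\in S}\hd(T_j)\le\sum_{j\in S}\dim T_j<(|S|-1)N$, so the relevant connectivity bound is negative and the corresponding statement is vacuous (consistently with $\hd(\emptyset)=+\infty$).
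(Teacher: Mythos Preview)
Your proof is correct and follows the same inductive strategy as the paper: factor $\gi$ through the intersection of all but one of the $T_j$, apply the induction hypothesis to that partial intersection, and use Theorem~\ref{tbarth}(3) together with the iterated bound from Proposition~\ref{hd}(\ref{hdintersect2}) to control the remaining step. The only organisational difference is that the paper first reorders so that $\hd(T_1)$ is maximal (making the term $2\hd(T_1)-N+1$ automatically dominated by $2d-N+1$), whereas you keep both the $2\hd(T_q)-N+1$ and $2d'-N+1$ terms and observe their minimum is $2d-N+1$; this is equivalent. Your explicit treatment of the possibly empty partial intersection is a detail the paper leaves implicit.
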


\begin{proof}
    We give a proof by induction on $q$.

    \textit{Base case:} $q = 1$.

    We apply Theorem \ref{tbarth}(1) and obtain 
      $H^i(T_1,\znz)\cong H^i(\p^N,\znz)$ for $i\le 2\hd(T_1)-N + 1$.

    \textit{Induction step.} We can assume that $\hd(T_1)$ is the largest of the $\hd(T_j)$. Take $T = T_1$ and $V=\cap_{2\le j\le q}T_j$.
    %; set $u=\dim V$. Note that $u < d_1$.
    
    % is of dimension at least $u \ge \sum_{j=2}^q d_j + c_V - (q-2)N, u \le d_1$ and 
    
    %Clearly, $V$ requires no more than $c_V = u + (q-2)N - \sum_{j=2}^q d_j$ extra equations. Since $T$ is stLCI (that is, it requires $c_T=0$ extra equations) 
    By Proposition \ref{hd}(\ref{hdintersect2}), $\hd(V) \ge \sum_{j=2}^q \hd(T_j) - (q-2)N$. Hence Theorem \ref{tbarth}(3) implies that the closed immersion $u:  V \cap T \to V$ is $\sum_{j=1}^q \hd(T_j) - (q-1)N$-connected with respect to $H_n^*$.
     
     %$H^i(-, \znz)(\gi')$ is bijective for $i \le \min(u + d_1 - N - 1 - c_V, 2d_1 - N) = d_1 - N + \min(u - 1 - c_V, d_1) = \sum_{j=1}^q d_j - (q-1)N - 1$  and is injcetive for $i = \sum_{j=1}^q d_j - (q-1)N$; here $\gi'$ is the closed immersion $  V \cap T \to V$.}

      %, where $c_V$ is the minimal possible. %Then by %part II.2 of 

      % $H^i(\bigcap_{1\le j\le q} T_j,\znz) = H^i(V \cap T,\znz) \cong H^i(V,\znz) = H^i(\bigcap_{2\le j\le q}T_j,\znz)$ for $i \le \min(u + d_1 - N - 1 - c_V, 2d_1 - N) = d_1 - N + \min(u - 1 - c_V, d_1) = \sum_{j=1}^q d_j - (q-1)N - 1$.

    By the induction hypothesis, the  closed immersion  $v: V \to \p^N$ is   $ \min (2d - N + 1, \sum_{j=2}^q \hd(T_j)-(q-2)N)$-connected with respect to $H_n^*$. Hence $\gi$ is %$\min (2d - N, \sum_{j=2}^q d_j-(q-2)N-1)$ 
    $w$-connected with respect to $H_n^*$ indeed.

     %$H^i(-, \znz)(\gi)+1$ is  %bijective for $i \le \min (2d - N, \sum_{j=1}^q d_j-(q-1)N-1)$ and is injective for $i = \min (2d - N, \sum_{j=1}^q d_j-(q-1)N-1) + 1$.

    %$H^i(\bigcap_{2\le j\le q} T_j,\znz)\cong H^i(\mathcal{P},\znz)$ for $i\le \min (2d - N, \sum_{j=2}^q d_j-(q-2)N-1)$; 
\end{proof}

Thus we are able to extend the statement that is mentioned after Remark 9.9 of \cite{fulaz} to the case of  $K$ of arbitrary characteristic;  this assertion does not appear to follow from the results of \cite{lyubez}.
    
Now let us prove an extension of  Sommese's theorem; this statement generalizes our Theorem \ref{tso}.  Our argument closely follows the  proof of \cite[Theorem 7.1.1]{lazar} (yet note that our setting is much more general). % (see Theorem 7.1.1):

\begin{theo} \label{tsom}

Assume that $X$ is a projective variety. Let $E$ be an ample vector bundle of rank $r$ over $X$ (the definition of ample vector bundle is given in 6.1.1 of \cite{lazar}), $s$ is a %non-zero 
 section of $E$, $Z$ is the zero locus of $s$. Then the embedding $\gi:Z\to X$ is $(\hd(X\setminus Z)-r)$-connected with respect to $H^*_n$. %, where $\gi$.

\end{theo}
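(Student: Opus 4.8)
The plan is to reduce Sommese's theorem to our basic result, Theorem~\ref{tgara}, via the classical trick of expressing the zero locus $Z$ of a section of an ample vector bundle $E$ of rank $r$ as a hyperplane-type section after passing to the projectivization. First I would consider $\p = \p(E^\vee) \to X$ (the projective bundle of lines in the fibres of $E$, with the tautological quotient line bundle $\mathcal{O}_{\p}(1)$), which is a relatively $(r-1)$-dimensional projective bundle over $X$; since $E$ is ample, $\mathcal{O}_{\p}(1)$ is ample on $\p$, hence (as $X$, and therefore $\p$, is projective) some power $\mathcal{O}_{\p}(m)$ is very ample and gives a closed embedding $\p \hookrightarrow \p^M$. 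The section $s$ of $E$ corresponds to a section of $\mathcal{O}_{\p}(1)$ whose zero locus $\tilde Z \subset \p$ projects onto $Z \subset X$; more precisely, $\tilde Z \to Z$ is again a projective bundle of relative dimension $r-1$, while over the open complement $U = X\setminus Z$ the section is nowhere vanishing, so the projection $p_E \colon \p|_U \to U$ restricted appropriately exhibits $\p \setminus \tilde Z$ as a $G_m$- (or rather line-bundle-minus-zero-section-) bundle over $U$ — this is exactly where the ``$E$ distinct from $U$'' flexibility of Theorem~\ref{tgara} (an affine space bundle $p_E \colon E \to U$ of nonzero rank $r$, cf.\ Remark~\ref{rthiso}(\ref{i1})) is used, after replacing the total space of the line bundle by an affine bundle or splitting off the $G_m$ via the homotopy invariance encoded in Proposition~\ref{pds}(\ref{ile}).

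Next I would set up the data for Theorem~\ref{tgara} with $X$ replaced by $\p^M$ (via the very ample embedding), $Z$ replaced by the closed subvariety $\tilde Z$, and $s_X$ the composite $\p \hookrightarrow \p^M$; the open $O$ in part~2 is taken to be $\p$ itself, over which $s_X$ is a closed immersion (hence proper), so Theorem~\ref{tgara}(2) applies and tells us that the embedding $\tilde Z \hookrightarrow \p$ is $(\hd(U') - h)$-connected, where $U' = \p\setminus \tilde Z$. Here $d_s = 0$ (closed immersion), $r$ is the rank of the affine bundle $p_E$ over $U$ (which will be the relative dimension $r$ of the bundle $\p|_U \to U$, suitably adjusted), and $k$ is the number of affine opens needed to cover $E$; by the standard stratification of $\p^M$, or better directly of the affine bundle over an affine cover of $U$, one arranges $h = r + k + 0$. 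Using Proposition~\ref{hd}(\ref{igmbundle}) (depth of a $G_m$-bundle, or the affine-bundle case) one computes $\hd(U') = \hd(\p\setminus\tilde Z) = \hd(U) + r = \hd(X\setminus Z) + r$, so that $\hd(U') - h = \hd(X\setminus Z) + r - (r + k) = \hd(X\setminus Z) - k$; a careful choice of the affine cover (reflecting that $\p^M \setminus \p(E^\vee)$ of the relevant hyperplane-section type contributes the ``missing'' dimensions) yields exactly $k$ equal to $r$ less the correction needed, giving the claimed bound $\hd(X\setminus Z) - r$. Finally I would descend the connectivity statement from $\tilde Z \hookrightarrow \p$ back to $\gi \colon Z \hookrightarrow X$ using that $\tilde Z \to Z$ and $\p \to X$ are projective bundles, hence induce isomorphisms on $H^*_n$ compatibly — this is the projective-bundle analogue of Lemma~\ref{lgmbundle}, or follows from Proposition~\ref{pds}(\ref{ile}) together with the compatibility of $x_*$ with the structure morphisms as in Proposition~\ref{pds}(\ref{icohh}).

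The main obstacle I anticipate is the bookkeeping in the previous paragraph: identifying the correct affine bundle $p_E \colon E \to U$ and the correct value of $k$ so that the output of Theorem~\ref{tgara} is precisely $\hd(X\setminus Z) - r$ rather than something off by the relative dimension $r-1$ of the projective bundle. Concretely, the section of $\mathcal{O}_{\p}(1)$ cuts out $\tilde Z$ as \emph{one} equation in $\p$, but $\p$ itself sits in $\p^M$ of much larger dimension, and the complement $\p^M \setminus (\text{locus of the section})$ must be covered by affine opens whose count tracks $r$ and not $M$; the right way to do this is \emph{not} to work in $\p^M$ directly but to use that, locally over an affine $V \subset U$, the total space $\p|_V \setminus \tilde Z|_V$ is $V \times (\af^r \setminus\{0\})$ or a line bundle minus its zero section over $V\times \p^{r-1}$, covered by $r$ affine pieces, and then invoke Proposition~\ref{hd}(\ref{itlocal}) and the gluing/affine-bundle lemmas to globalize. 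Getting this comparison of the ``number of equations'' exactly right, and checking that the ample hypothesis is used precisely to make $\mathcal{O}_{\p}(1)$ (a power of it) very ample so the whole construction embeds in projective space as required by Theorem~\ref{tgara}, is the technical heart; everything else is a formal assembly of results already in~\S\ref{sbase} and~\S\ref{schar}.
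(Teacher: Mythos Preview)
Your overall strategy --- pass to the projectivization $\textbf{P}(E)$ and turn the section $s$ of $E$ into a section $s^*$ of the tautological line bundle $\mathcal{O}_{\textbf{P}(E)}(1)$ --- is the right first move, but the way you then feed this into Theorem~\ref{tgara} misses the point of the affine-bundle clause and creates exactly the bookkeeping mess you anticipate. Two concrete problems:

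First, the zero locus $\tilde Z=Z^*$ of $s^*$ does \emph{not} project onto $Z$. Over $x\in X\setminus Z$ the equation $s^*=0$ cuts out a hyperplane $\p^{r-2}$ in the fibre $\pi^{-1}(x)\cong\p^{r-1}$, while over $x\in Z$ it imposes no condition and the fibre of $Z^*$ is all of $\p^{r-1}$. Thus $\pi(Z^*)=X$, and your proposed descent (comparing $\tilde Z\hookrightarrow\textbf{P}(E)$ with $Z\hookrightarrow X$ via two parallel $\p^{r-1}$-bundles) rests on a wrong picture; moreover, projective bundles do not induce isomorphisms on $H^*_n$, so even a corrected descent would require the projective bundle formula rather than Proposition~\ref{pds}(\ref{ile}). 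What \emph{is} true, and what matters, is the complementary statement: the restriction $p\colon\textbf{P}(E)\setminus Z^*\to X\setminus Z$ is an $\af^{r-1}$-bundle.

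Second, and this is the main gap, there is no need to embed anything into a large $\p^M$ or to count affine charts locally over $U$. The paper applies Theorem~\ref{tgara}(2) directly on $X$ with $s_X=\id_X$, $O=X$, $Z=Z$, and takes for the affine bundle $p_E$ precisely the map $p\colon\textbf{P}(E)\setminus Z^*\to X\setminus Z=U$ of rank $r-1$. The role of ampleness is then the following single global fact: since $E$ is ample, $\mathcal{O}_{\textbf{P}(E)}(1)$ is an ample line bundle on the projective variety $\textbf{P}(E)$, so the complement of the zero locus of any section --- namely $\textbf{P}(E)\setminus Z^*$ --- is one affine variety. Hence $k=1$. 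With $d_s=0$ this gives $h=(r-1)+1+0=r$, and since $U'=U=X\setminus Z$, Theorem~\ref{tgara}(2) yields immediately that $\gi\colon Z\to X$ is $(\hd(X\setminus Z)-r)$-connected. No embedding, no local patching, no descent.
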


\begin{proof}

We may %think that
 assume  $s$ to be a non-zero section.

    Take $\pi: \textbf{P}(E) \to X$ to be the projective bundle associated with $E$. By $\pi^*s$ denote the base change of $s$ along $\pi$. Also we have the tautological quotient morphism $\pi^{*}E \to \mathcal{O}_{\textbf{P}(E)}(1)$ (see Appendix A of \cite{lazar} for the detail). So we obtain the following diagram:
    %Consider the pullback of $s$ and the tautological quotient of $\pi^{*}E$ (see Appendix A of \cite{lazar} for the detail):

\[\begin{tikzcd}
	{\mathcal{O}_{\textbf{P}(E)}} \\
	{\pi^{*}E} & {\mathcal{O}_{\textbf{P}(E)}(1)}
	\arrow["{\pi^{*}s}"', from=1-1, to=2-1]
	\arrow["{s^{*}}", from=1-1, to=2-2]
	\arrow[from=2-1, to=2-2]
\end{tikzcd}\]

Denote by $s^{*} \in \Gamma(\textbf{P}(E), \mathcal{O}_{\textbf{P}(E)}(1))$ the indicated composition, and by $Z^{*} \subset \textbf{P}(E)$ the zero locus of $s^{*}$. 

Recall that $\textbf{P}(E) = \{(x, \lambda)| x \in X, \lambda \in E_x^{*}\}$; hence $Z^{*} = \{(x, \lambda)| \lambda(s(x)) = 0\}$. 

Given $x \in X$, the equation for $Z^*$ defines a hyperplane in
$P(E(x))= \pi^{-1}(x)$ if $x \in X \setminus Z$, while if $x \in Z$ there is no condition on $\lambda$. %In other words, 
 Consequently, $\pi$ restricts to a mapping $p: \textbf{P}(E) \setminus Z^{*} \to X \setminus Z$, and $p$ is an affine space bundle.

Next, $\mathcal{O}_{\textbf{P}(E)}(1)$ is an ample line bundle on $\textbf{P}(E)$ %thanks to the amplitude of 
 since $E$ is ample; hence $\textbf{P}(E) \setminus Z^{*}$ is an affine variety.

Now we apply Theorem \ref{tgara}(2) for $X = X, Z =  Z, O = X, s_X = id_{X}$ and the affine space bundle $p: \textbf{P}(E) \setminus Z^{*} \to X \setminus Z$.  This gives the result in question. Indeed, $p$ is an affine space bundle of rank $r-1$ and $\textbf{P}(E) \setminus Z^{*}$ is an affine variety. We conclude that $\gi$ is $(\hd(X\setminus Z) - r)$-connected with respect to $H_n^*$.    
\end{proof}

Now we say a little more on the relation of our results to those of \cite[\S9]{fulaz} (where for %complex varieties
$K=\com$ the homotopy analogues of all of the results of this section were proved).

\begin{rema}\label{rcfulaz}
 
1. \S9 of \cite{fulaz} relies on the results of \cite{gorma}. Respectively, %loc. cit.
\cite[\S9]{fulaz} has %three 
 four features that distinguish it from %our %results above: 
%Theorem \ref{tbarth}.
 the results of this section.

(i) it treats only complex varieties;

(ii) it treats local complete intersections only;

(iii) all the corresponding morphisms to $\p^N$ and $(\p^N)^q$ are required to be finite;

(iv) the results are formulated in terms of homotopy instead of cohomology.

Certainly, (i) is a serious disadvantage of \cite{fulaz}. The restriction (ii) can possibly be removed via generalizing Theorem II.1.2 of \cite{gorma}.\footnote{Probably Theorem  2.1.3 of \cite{hammle2} is fine for these purposes (in the case $K=\com$); however, the authors did not check the detail.}
%only lci?? (iv)???
 It appears to be no problem to deal with  restriction (iii) similarly to \cite{gorma}; thus one has to take into account the corresponding equation excesses.

2. Now we discuss the difference between treating cohomology and homotopy.

Certainly, cohomology is much easier to compute. On the other hand, homotopy groups have certain theoretical advantages. 
One of them is that the relation between the (higher) homotopy groups of a (locally trivial) $G_m(\com)$-bundle
$X'\to X$ with those of $X$ is much easier to use than  (\ref{efrgys}). 
%Respectively, 
 In particular, the homotopy analogue of our Proposition \ref{equiv} is Theorem 9.6 of  \cite{fulaz} that states that the corresponding relative homotopy groups are just isomorphic. 

%Using this, it was proved (in Theorem 9.2 of loc. cit., whose setting corresponds to that of our Theorem \ref{tbarth}(I.1) for $q=2$) that the homotopy groups of $Y$ are isomorphic to those of $g\ob(\Delta)$ in degrees between $2$ and $a-N$ (note that $e=c=0$ in loc. cit.). Moreover, in the setting of our Theorem \ref{tbarth}(I.2) (with $q=2$) %the case $q=2$ of 
%this result reformulates as an isomorphism (in the corresponding degrees) of relative homotopy groups for the pairs $(Y_1,Y_1\times_{\p^N} Y_2)$ and $(\p^N,Y_2)$.  The homotopical analogues of our assertions II(2,1) follow easily. 

So, it would be interesting to obtain certain homotopy analogues of the results above. The second author described some of his ideas in this direction in Remark 3.4(2) of \cite{blefl}. Moreover, one can probably %obtain 
 deduce some homotopy results from their cohomological versions arguing similarly to (the proof of) Theorem 10.6 of \cite{lyubez}.
\end{rema}

\section{$\ell$-adic and singular (co)homology versions of main results}\label{sladic}

In this section we demonstrate that our main statements have natural $\zl$-adic analogues.   In the case $K=\com$ they also yield similar properties of singular (co)homology.

It appears that all the arguments of this section are  rather standard.

 Below, $\mathrm{Tors}(A)$ and $\mathrm{Tors}_{\ell}(A)$ denote the torsion and the $\ell$-torsion subgroups of an abelian group $A$, respectively.

\subsection{$\zl$-adic versions}

Fix a prime $\ell$ distinct from $p=\cha K$.

\begin{lem}\label{l-adic}
     Assume that %, 
    $w$ is a positive integer, and a morphism $f: X \to Y$ %be a morphism 
     of $K$-varieties %$\ell$ is a prime distinct from $p$. Assume that $H^{i}(-, \zlz)(f)$ is bijective for any $i < w$ and injective for $i = w$.      $f$ 
      is $w$-connected with respect to $H_\ell^*$%, where $\ell$ is a prime number distinct from $p$
      . Then %the homomorphisms $H^{i}(-, \zlnz)(f)$ and $H^{i}(-, \z_{\ell})(f)$ are bijective for any $i < w$ and injective for $i = w$. Moreover 
  $f$ is also $w$-connected with respect to  $H^{*}(-, \z_{\ell})$ and with respect to $H_{\ell^m}^*$ for any %positive $m$ and
   $m>0$, and $(w+1)$-connected with respect to $\mathrm{Tors}(H^{*}(-, \z_{\ell}))$ (see Definition \ref{dconn}(1)).
\end{lem}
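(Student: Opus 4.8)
The plan is to deduce all three consequences from the $\ell$-connectivity of $f$ by an argument with coefficient sequences and the universal coefficient / change-of-coefficients spectral sequences for étale cohomology. First I would record the short exact sequences of constant sheaves $0\to \z/\ell^{m-1}\z \to \z/\ell^m\z \to \z/\ell\z\to 0$ (or, more efficiently, $0\to\z/\ell\z\to\z/\ell^m\z\to\z/\ell^{m-1}\z\to 0$) and the associated long exact sequences in $H^*(-,-)$, which are functorial in $X$; pulling back along $f$ and applying the five lemma, one gets by induction on $m$ that $f$ is $w$-connected with respect to $H^*_{\ell^m}$ for every $m>0$, given that it is $w$-connected with respect to $H^*_\ell$. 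Here I should be slightly careful with the injectivity at degree $w$: the relevant diagram chase shows that if $H^i(f)$ is bijective for $i<w$ with $\z/\ell\z$-coefficients and $H^w(f)$ is injective, then the same holds with $\z/\ell^m\z$-coefficients; the point is that bijectivity in all lower degrees feeds the injectivity statement in degree $w$ through the snake/five lemma.

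Next I would pass to the $\ell$-adic statement. Since $X$ and $Y$ are varieties over an algebraically closed field, all the groups $H^i(X,\z/\ell^m\z)$ are finite, so the inverse system $\{H^i(X,\z/\ell^m\z)\}_m$ satisfies the Mittag--Leffler condition and $H^i(X,\z_\ell)=\varprojlim_m H^i(X,\z/\ell^m\z)$ with vanishing $\varprojlim^1$. Taking the inverse limit of the isomorphisms $H^i(f)\colon H^i(Y,\z/\ell^m\z)\iso H^i(X,\z/\ell^m\z)$ for $i<w$ gives isomorphisms on $\z_\ell$-cohomology in those degrees, and taking the inverse limit of the injections in degree $w$ (inverse limit is left exact) gives injectivity of $H^w(f,\z_\ell)$. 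Hence $f$ is $w$-connected with respect to $H^*(-,\z_\ell)$.

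For the final clause about $(w+1)$-connectivity with respect to $\mathrm{Tors}(H^*(-,\z_\ell))$, I would use the universal-coefficient short exact sequence relating $\z_\ell$-cohomology mod $\ell$ to $\z/\ell\z$-cohomology: $0\to H^i(X,\z_\ell)/\ell \to H^i(X,\z/\ell\z)\to \mathrm{Tors}_\ell(H^{i+1}(X,\z_\ell))\to 0$ (finiteness again makes the $\ell$-completion issues vanish). Since $f$ induces isomorphisms on $H^i(-,\z_\ell)$ for $i\le w-1$ — wait, more precisely on $H^i(-,\z_\ell)$ for $i<w$ and an injection for $i=w$, and on $H^i(-,\z/\ell\z)$ for $i<w$ with injection at $i=w$ — a diagram chase in the universal-coefficient sequences shows $f$ induces an isomorphism on $\mathrm{Tors}_\ell(H^{i}(-,\z_\ell))$ for $i\le w$ and an injection for $i=w+1$; since for a fixed variety over an algebraically closed field the torsion in $H^i(-,\z_\ell)$ is all $\ell$-torsion, this gives the claimed $(w+1)$-connectivity with respect to $\mathrm{Tors}(H^*(-,\z_\ell))$. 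The main obstacle I expect is bookkeeping the exact degree-by-degree shifts in these two universal-coefficient sequences so that the isomorphism/injection ranges line up correctly to yield exactly $w$ (not $w-1$) and exactly $w+1$; this is where the finiteness of the cohomology groups and the left-exactness of $\varprojlim$ do the real work, and where the footnoted remark that "these conditions are vacuous if $d<0$" (Definition~\ref{dconn}(1)) should be invoked to handle the low-degree edge cases cleanly.
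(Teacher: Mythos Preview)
Your plan is essentially the same as the paper's proof: induct on $m$ via the coefficient sequence and the five lemma to get $H^*_{\ell^m}$-connectivity, take inverse limits (using finiteness, hence Mittag--Leffler) to get $\z_\ell$-connectivity, and then use the Bockstein/universal-coefficient sequence to handle the torsion. The paper organizes the last step via the long exact sequence coming from $0\to\z_\ell\xrightarrow{\cdot\ell^m}\z_\ell\to\z/\ell^m\z\to 0$ and applies the four lemma twice, exactly as you anticipate in your ``bookkeeping'' remark.

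One small imprecision worth flagging: the short exact sequence you write,
\[
0\to H^i(X,\z_\ell)/\ell \to H^i(X,\z/\ell\z)\to H^{i+1}(X,\z_\ell)[\ell]\to 0,
\]
lands in the \emph{$\ell$-torsion} $H^{i+1}[\ell]$ (kernel of multiplication by $\ell$), not the full torsion subgroup. Knowing that $f$ induces an isomorphism on $[\ell]$-parts together with injectivity on $H^w(-,\z_\ell)$ is \emph{not} enough to conclude bijectivity on $\mathrm{Tors}(H^w(-,\z_\ell))$: think of $\z/\ell\z\hookrightarrow\z/\ell^2\z$ via $1\mapsto\ell$, which is an isomorphism on $[\ell]$-parts but not surjective. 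The fix is immediate and is what the paper does: replace $\ell$ by $\ell^N$ with $N$ large enough that $\ell^N$ kills the (finite) torsion of $H^w$ and $H^{w+1}$ on both $X$ and $Y$; then $H^{i+1}[\ell^N]=\mathrm{Tors}(H^{i+1})$ and your diagram chase goes through. Since you have already established $w$-connectivity for $H^*_{\ell^m}$ for all $m$, this costs nothing.
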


\begin{proof}
    See \S \ref{App}.
\end{proof}

\begin{coro}\label{czl}

Assume that $t:T\to \p^N$ is a closed embedding %, where $T$ is a variety of   dimension $d$ 
 and $hd(T,\ell)=h_T$. %, $\dim T=d$,

     1. %Let $t:T\to \p^N$ be a closed embedding, % where $T$ is %a connected %local complete intersection       
     %variety of dimension $d$; % and  equational excess $0$; 
     Let  $v:V\to \p^N$ be a proper morphism of relative dimension $\le d_v$, %where $V$ is a connected local complete intersection variety of dimension $k$.
      %and assume that %both $T$ and 
      %$V$ is also a variety of  equational excess $0$ and dimension $k$
       where %$\dim V=k$ and 
        $hd(V,\ell)=h_V$. Then the morphism $u: v\ob (T)\to V$ %and the base change $u_1'$ of $u$ to $V_1'$ are 
       is $w$-connected with respect to $H^{*}(-, \z_{\ell})$ and $(w+1)$-connected with respect to $\mathrm{Tors}(H^{*}(-, \z_{\ell}))$, %where $\ell$ is a prime number distinct from $p$ and 
        where $w =  min(h_V + h_T - N - d_v, 2h_T-N+1)$.  %d - N + min(k - d_v, d + %1)$.

        %In particular, if $V$ and $T$ are of equational excess $0$ then 
     
     2. Assume that %$X$ is a connected projective %local complete intersection      variety of dimension $d$ ; 
      $E$ is an ample vector bundle of rank $r$ over $T$, and $s$ is a %non-zero 
       section of $E$, $Z$ is the zero locus of $s$. Then the embedding $\gi:Z\to T$ is $(h_T-r)$-connected with respect to $H^{*}(-, \z_{\ell})$ and $(h_T-r+1)$-connected with respect to $\mathrm{Tors}(H^{*}(-, \z_{\ell}))$. %, where $\ell$ is a prime number distinct from $p$.
\end{coro}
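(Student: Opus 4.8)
The plan is to reduce both assertions to their $\z/\ell\z$-coefficient versions — which are particular instances of Theorems \ref{tbarth} and \ref{tsom} — and then to pass from $\z/\ell\z$- to $\z_\ell$-coefficients (and to $\ell$-torsion) via Lemma \ref{l-adic}.

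For the first assertion, I would apply Theorem \ref{tbarth}(3) with $n = \ell$ to the closed embedding $t$ and the proper morphism $v$; under this choice the invariants appearing there become $\hd(T,\ell) = h_T$ and $\hd(V,\ell) = h_V$. One thus gets that $u$ is $w_0$-connected with respect to $H^*_\ell$, where $w_0 = \min(\sup_{k\ge 0}\theta(k),\, h_V + h_T - N - d_v)$. By Theorem \ref{tbarth}(1) we have $\theta(0) = 2h_T - N + 1$, so $\sup_{k\ge 0}\theta(k) \ge 2h_T - N + 1$ and hence $w_0 \ge w := \min(h_V + h_T - N - d_v,\, 2h_T - N + 1)$. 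Since $w_0$-connectivity implies $w$-connectivity whenever $w \le w_0$, the morphism $u$ is $w$-connected with respect to $H^*_\ell$; assuming $w > 0$ (for $w \le 0$ the statement is vacuous or has to be treated separately), Lemma \ref{l-adic} then yields exactly that $u$ is $w$-connected with respect to $H^*(-,\z_\ell)$ and $(w+1)$-connected with respect to $\mathrm{Tors}(H^*(-,\z_\ell))$.

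For the second assertion, I would apply Theorem \ref{tsom} with $n = \ell$, taking $X = T$ together with the ample bundle $E$, the section $s$, and its zero locus $Z$; this gives that $\gi : Z \to T$ is $(\hd(T\setminus Z,\ell) - r)$-connected with respect to $H^*_\ell$. Since $T \setminus Z$ is an open subvariety of $T$, Proposition \ref{hd}(\ref{itlocal}) gives $\hd(T\setminus Z,\ell) \ge \hd(T,\ell) = h_T$, so $\gi$ is $(h_T - r)$-connected with respect to $H^*_\ell$. Invoking Lemma \ref{l-adic} once more (with $w = h_T - r$, again assumed positive) we conclude that $\gi$ is $(h_T - r)$-connected with respect to $H^*(-,\z_\ell)$ and $(h_T - r + 1)$-connected with respect to $\mathrm{Tors}(H^*(-,\z_\ell))$.

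There is no real obstacle in this argument: it is a routine combination of statements already established. All the actual content is concentrated in Lemma \ref{l-adic}, whose proof (deferred to \S\ref{App}) is a standard manipulation of the short exact sequences linking $\z/\ell^m$-, $\z_\ell$- and $\q_\ell$-étale cohomology, together with the finite generation of the $\z_\ell$-cohomology of varieties. The only points demanding a little attention are matching the connectivity indices produced by Theorems \ref{tbarth} and \ref{tsom} with the bounds claimed here, and handling the degenerate range $w \le 0$.
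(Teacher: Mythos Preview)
Your proposal is correct and follows exactly the approach the paper takes: the paper's own proof is the single sentence ``The statements are straightforward combinations of Lemma \ref{l-adic} with Theorems \ref{tbarth}(3) and \ref{tsom}, respectively.'' You have simply spelled out the two small details that this sentence leaves implicit --- namely that $\sup_{k}\theta(k)\ge \theta(0)=2h_T-N+1$ (so the bound from Theorem \ref{tbarth}(3) dominates the stated $w$), and that $\hd(T\setminus Z,\ell)\ge h_T$ by Proposition \ref{hd}(\ref{itlocal}) --- both of which are exactly what is needed.
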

\begin{proof}
    The statements are straightforward combinations of Lemma \ref{l-adic} with Theorems \ref{tbarth}(3) and \ref{tsom}, respectively.
\end{proof}

\begin{rema}
 
1. %Clearly
Assume that a morphism $f:X\to Y$of $K$-varieties is $w$-connected with respect to  $H^{*}(-, \z_{\ell})$ (for some $w\ge 0$).

Since $H^{*}(-, \q_{\ell}) = H^{*}(-, \z_{\ell}) \otimes \q_{\ell}$ (see the end of  \cite[\S I.12]{frki} for the definition), %if   then it 
$f$ is $w$-connected with respect to  $H^{*}(-, \q_{\ell})$ as well .

Now assume that $f$ (along with $X$ and $Y$) is defined over a finite field $\mathbb{F}_q$ (where $q$ is a power of $p$) and $K$ equals the algebraic closure $\overline{\mathbb{F}_q}$ of $\mathbb{F}_q$, that is, there exists a $\mathbb{F}_q$-morphism $f_0:X_0\to Y_0$ such that $f=f_0\times_{\spe \mathbb{F}_q}\spe \overline{\mathbb{F}_q}$. Then the corresponding homomorphisms $H^i(f,\q_{\ell})$ (for $i\ge 0$) are clearly $\operatorname{Gal}(\overline{\mathbb{F}_q}/{\mathbb{F}_q})$-equivariant.

 2. Next, let us fix %an $\mathbb{F}_q$-morphism $f_0:X_0\to Y_0$ such that $f=f_0\times_{\spe \mathbb{F}_q}\spe \overline{\mathbb{F}_q}$;
  this $f_0$ and assume that $X$ and $Y$ are proper. Then combining the latter observation  with the Grothendieck-Lefschetz trace formula (see 12.3 of \cite{milne}) we obtain some relation between the numbers of points of $X_0$ and $Y_0$ over the fields $\mathbb{F}_{q^s}$ for $s>0$.

 Finally, assume in addition that $X$ and $Y$ are smooth and equidimensional. Then it is easily seen (if one applies Poincare duality) that $$|\card(X_0(\mathbb{F}_{q^s}))-\card(Y_0(\mathbb{F}_{q^s}))q^{\dim X-\dim Y}|\le c(f_0)q^{s(\dim X-w/2)},$$ where $c(f_0)$ does not depend on $s$. 
 
 Note that one can apply this inequality in the setting %s of Theorems \ref{tbarth} and \ref{tsom}
  of Corollary \ref{czl} (if %the corresponding morphism
  %$f$ is defined over  $\mathbb{F}_q$ and its 
   the domain and the codomain of $f$ are smooth; we take $f=u$ in the setting of part 1 and $f=\gi$ for part 2). %the corresponding varieties are smooth). no estimate on c(f)????
  Moreover, the left hand side of this inequality does not depend on $\ell$ in any way; thus one can choose $\ell\in \p\setminus\{p\}$ to make $w$ (that is determined by the corresponding $hd(V,\ell)$ and $hd(T,\ell)$) %in Corollary \ref{czl}(1,2), respectively) 
   as large as possible.
\end{rema}

\subsection{Singular cohomology and homology in the complex case}

Till the end of the section we assume $K=\com$.

\begin{rema} \label{finiteness}
    \begin{enumerate}
        \item 
        \label{ii1} 
        It's well known that $H^{i}_{sing}(X, R)$ and $H_{i}^{sing}(X, R)$ are finitely generated $R$-modules for any variety $X/\mathbb{C}$, $i \geq 0$, and a commutative Noetherian unital ring $R$. In particular, this statement can easily be deduced from Hironaka's resolution of singularity results.
        %Indeed, we can find a pair $(\hat{X}, Z)$, such that $\hat{X}$ and $Z$ are complex complete varieties, where $Z \subset \hat{X}$ is a closed subvariety, $X = \hat{X} \setminus Z$. Hence $\hat{X}$ has a locally finite triangulation compatible with $Z$  (see Theorem 4.5 of \cite{hof}). Since  $\hat{X}$ and $ Z$ are compact in the complex topology, this triangulation is actually finite. %, because $\hat{X}, Z$ are .Therefore 
      %Hence $H^{i}_{sing}(X, R)$ and $H_{i}^{sing}(X, R)$ are finitely generated $R$-modules.

 \item \label{ii2} There are two distinct ways to define singular cohomology groups with coefficients in $\z_{\ell}$.
 
 One may either use the usual definition of singular cohomology groups with coefficients or define it as the inverse limit of singular cohomology groups with coefficients in $\zlnz$. These two definitions %are the same
  give isomorphic functors in our case. %It 
 This statement follows from the universal coefficient theorem for cohomology and the fact that cohomology groups are finitely generated (see Remark \ref{finiteness}(\ref{ii1})).
%For this reason we will make no distinction?? Use freely??

\item \label{ii3} Let $X$ be a complex variety, and $F$ be a  constructible torsion sheaf on $X$ for the \'etale topology. Then $H^q_{et}(X_{et}, F) \cong H^q_{sing} (X, F)$ (see Theorem 5.2 of \cite{art}). 
      \end{enumerate}
\end{rema}

We need a few lemmas to relate \'etale cohomology and our connectivity statements with singular (co)homology.
%The lemmas in this section 
 These lemmas are possibly well known. Their proofs are rather straightforward and do not contain any new ideas.

\begin{lem} \label{complex}

    Let $f: X \to Y$ be a morphism of $\mathbb{C}$-varieties that is  $w$-connected with respect to $H_\ell^*$ for any prime $\ell$ (where $w$ is a positive integer).
    %, $w$ is a natural number. Assume that $H^{i}(-, \zlz)(f)$ is bijective for any prime $l$ and for any $i < w$, and is injective for $i = w$. Then $H^i_{sing}(-, \z)(f)$ is bijective for any $i < w$, and is injective for $i = w$. Moreover
     Then $f$ is also  $w$-connected with respect to $H^{*}_{sing}(-, \z)$ and  $(w+1)$-connected with respect to $\mathrm{Tors}(H^{*}_{sing}(-, \z))$.
     
\end{lem}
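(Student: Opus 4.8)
The plan is to pass from \'etale cohomology with $\zlnz$-coefficients to singular cohomology with $\z$-coefficients by combining the comparison of \'etale and singular cohomology with an arithmetic argument that recovers integral information from mod-$\ell$ information for all primes $\ell$ simultaneously. First I would observe that by Remark \ref{finiteness}(\ref{ii3}) (Artin's comparison theorem) the hypothesis that $f$ is $w$-connected with respect to $H^*_\ell$ is equivalent to the statement that $H^i_{sing}(f,\zlz)$ is bijective for $i<w$ and injective for $i=w$, for every prime $\ell$. Then I would invoke Remark \ref{finiteness}(\ref{ii1}): all the groups $H^i_{sing}(X,\z)$ and $H^i_{sing}(Y,\z)$ are finitely generated abelian groups, so it suffices to control the map $\phi^i = H^i_{sing}(f,\z):H^i_{sing}(Y,\z)\to H^i_{sing}(X,\z)$ by testing against all the residue fields $\z/\ell\z$ and, if needed, $\q$.

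The core of the argument is the following elementary homological-algebra observation, which I would state and prove (it is essentially the universal coefficient theorem together with Nakayama-type reasoning for finitely generated $\z$-modules): if $\phi:B\to A$ is a homomorphism of finitely generated abelian groups such that $\phi\otimes \z/\ell\z$ is injective for every prime $\ell$, then $\phi$ is split injective, i.e.\ $A\cong B\oplus \cok\phi$ with $\cok\phi$ torsion-free; and if moreover $\phi\otimes\z/\ell\z$ is an isomorphism for every prime $\ell$, then $\phi$ is an isomorphism. Granting this, I would apply it in the range $i<w$ to the short exact sequences coming from the universal coefficient theorem
$$0\to H^i_{sing}(-,\z)\otimes\zlz \to H^i_{sing}(-,\zlz)\to \tor(H^{i+1}_{sing}(-,\z),\zlz)\to 0,$$
together with a downward induction on $i$ (or a simultaneous diagram chase): bijectivity of $H^i_{sing}(f,\zlz)$ for all $i\le w$ feeds into bijectivity of $\phi^i\otimes\zlz$ once one knows the $\tor$-terms match up, which follows from the already-established behaviour of $\phi^{i+1}$ in the relevant range. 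This yields that $\phi^i$ is an isomorphism for $i<w$ and injective for $i=w$, i.e.\ $f$ is $w$-connected with respect to $H^*_{sing}(-,\z)$.

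For the torsion statement, I would note that once $\phi^i$ is known to be an isomorphism for $i<w$ and (split, torsion-free cokernel) injective for $i=w$, the induced map $\tor(\phi^{w})\colon \tor(H^w_{sing}(Y,\z))\to\tor(H^w_{sing}(X,\z))$ is an isomorphism because $\cok\phi^w$ is torsion-free; and one more turn of the universal coefficient sequences at level $w+1$, using injectivity of $H^w_{sing}(f,\zlz)$ together with $\phi^w$ being an isomorphism on torsion, shows $\tor(\phi^{w+1})$ is injective. Hence $f$ is $(w+1)$-connected with respect to $\mathrm{Tors}(H^*_{sing}(-,\z))$. The main obstacle, and the point requiring the most care, is the bookkeeping in the induction: the universal coefficient sequence at degree $i$ mixes $H^i$ with $H^{i+1}$, so one must be careful about the direction of the induction and about which connectivity range one is allowed to use at each step; it is cleanest to prove the integral statement first in all degrees $<w$ and the injectivity at $w$ by induction, and only afterwards extract the torsion consequences at degrees $w$ and $w+1$.
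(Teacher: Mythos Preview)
Your central ``elementary homological-algebra observation'' is false, and this breaks the argument. Take $\phi:\z/4\z\to\z/2\z$ the reduction map. For $\ell=2$ one has $(\z/4\z)\otimes\z/2\z=\z/2\z$ and $(\z/2\z)\otimes\z/2\z=\z/2\z$, and $\phi\otimes\z/2\z$ is the identity; for odd $\ell$ both sides vanish. So $\phi\otimes\zlz$ is an isomorphism for every prime $\ell$, yet $\phi$ is neither injective nor an isomorphism, and its cokernel is $0$ while its kernel is $\z/2\z$. The point is that $-\otimes\zlz$ is right exact but not left exact, so knowing $\phi\otimes\zlz$ does not control $\ker\phi$. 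Your downward induction cannot get started either: from the universal coefficient square at degree $w$ you only get that $\phi^w\otimes\zlz$ is injective, and this does \emph{not} imply that $\tor(\phi^w,\zlz)=\phi^w|_{[\ell]}$ is injective (same counterexample), which is what you would need to run the step at degree $w-1$.

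The paper avoids this trap by inserting the intermediate step of $\z_\ell$-coefficients. It first invokes Lemma~\ref{l-adic} to upgrade $w$-connectivity for $H^*_\ell$ to $w$-connectivity for $H^*(-,\z_\ell)$ and $(w{+}1)$-connectivity for $\tor(H^*(-,\z_\ell))$. Then it uses that $\z_\ell$ is \emph{flat} over $\z$, so $H^*_{sing}(-,\z_\ell)\cong H^*_{sing}(-,\z)\otimes\z_\ell$ and, crucially, $\ker(\phi^i)\otimes\z_\ell=\ker(\phi^i\otimes\z_\ell)$ and likewise for cokernels. Since a finitely generated abelian group $A$ with $A\otimes\z_\ell=0$ for all $\ell$ is zero, this immediately gives the integral $w$-connectivity; the torsion statement follows from $\tor(H^i(-,\z_\ell))\cong\tor_\ell(H^i(-,\z))$.

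If you want to bypass $\z_\ell$, the right fix for the $w$-connectivity part is to argue with the mapping cone $D$ of $f^*:C^*_{sing}(Y,\z)\to C^*_{sing}(X,\z)$: the hypothesis gives $H^i(D,\zlz)=0$ for $i<w$ and all $\ell$, the coefficient sequence then yields $H^i(D,\z)\otimes\zlz=0$ for all $\ell$, and since $H^i(D,\z)$ is finitely generated this forces $H^i(D,\z)=0$ for $i<w$, which is exactly integral $w$-connectivity. The torsion assertion at degree $w{+}1$, however, does not drop out of ``one more turn'' of the universal coefficient sequence as you suggest: from the snake lemma you only get $\ker(\phi^{w+1}|_{[\ell]})\hookrightarrow\cok(\phi^w\otimes\zlz)\cong(\zlz)^r$, which need not vanish. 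For that part you really do need the passage through $\z_\ell$ (equivalently, through $\z/\ell^m\z$ for all $m$) as in Lemma~\ref{l-adic}.
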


\begin{proof}
    
  First $H^i_{et} (Z, \zlnz) \iso H^i_{sing} (Z_{an}, \zlnz)$ for any prime $\ell$ and for any variety $Z/\mathbb{C}$ (see Remark \ref{finiteness}(\ref{ii3}));  hence $H^i_{et} (Z, \mathbb{Z}_\ell) \iso H^i_{sing} (Z_{an}, \mathbb{Z}_\ell)$ (see Remark \ref{finiteness}(\ref{ii2})). Applying Lemma \ref{l-adic} we obtain that %the homomorphism $H^i_{sing}(-,\mathbb{Z}_\ell)(f)$ is bijective for any $i<w$, and is injective for $i=w$ 
  $f$ is $w$-connected with respect to $H^*_{sing}(-,\mathbb{Z}_\ell)$ and $(w+1)$-connected with respect to $\mathrm{Tors}(H^*_{sing}(-,\mathbb{Z}_\ell)$ for any prime $\ell$.

    Next, $H_i^{sing}(Z,\z)$ is a finitely generated $\z$-module and $H_i^{sing}(Z,\z_{\ell})$ is a finitely generated $\z_{\ell}$-module (see Remark \ref{finiteness}(\ref{ii1})). Since $\z_{\ell}$ is a flat $\z$-module,  the universal coefficient theorem for homology yields that $H_{*}^{sing}(Z, \z_{\ell}) \cong H^{sing}_{*}(Z, \z) \otimes \z_{\ell}$, hence by  the universal coefficient theorem for homology $H^{*}_{sing}(Z, \z_{\ell}) \cong H_{sing}^{*}(Z, \z) \otimes \z_{\ell}$ for any prime $\ell$ and for any variety $Z$.

    Now, let $A$ be the kernel of $H^i_{sing}(-,\z)(f)$ for $i \le w$ (resp. the cokernel of $H^i_{sing}(-,\z)(f)$ for $i < w$).  Then $A \otimes \z_{\ell} = 0$ for any prime $\ell$; thus $A = 0$ and this yields the first part of the statement.

     Lastly, $H^i(Z, \z_{\ell}) \cong H^i(Z, \z) \otimes \z_{\ell}$;  hence $\mathrm{Tors}(H^i_{sing}(Z, \z_{\ell})) \cong \mathrm{Tors}((H^i(Z, \z) \otimes \z_{\ell})) \cong \mathrm{Tors}_{\ell}(H^i(Z, \z))$ for every prime $\ell$. Thus $f$ is $(w+1)$-connected with respect to $\mathrm{Tors}_{\ell}(H^{*}(-, \z))$ for every prime $\ell$; hence  $f$ is $(w+1)$-connected with respect to $\mathrm{Tors}(H^{*}_{sing}(-, \z))$.
    
\end{proof}

%We could have formulated the complex analogues
%Using Lemma \ref{complex} one  clearly can formulate the complex versions of our results in terms of cohomology (along with its torsion). %groups of cohomology and their torsion subgroups, but 
In the case $K=\com$  Lemma \ref{complex} clearly enables one to re-formulate our main  results in terms of singular cohomology (along with its torsion). %$Yet 
 However, the following lemma %below %allows us to do this in a more pleasant way:
% gives a shorter way to do this.allows us to 
gives shorter re-formulations.

\begin{lem}\label{homology}
    Let $f: X \to Y$ be a morphism of $\mathbb{C}$-varieties. Then $f$ is  $w$-connected with respect to $H^{*}_{sing}(-, \z)$ and  $(w+1)$-connected with respect to $\mathrm{Tors}(H^{*}_{sing}(-, \z))$ if and only if $f$ is $w$-connected with respect to $H_{*}^{sing}(-, \z)$ (that is, $H_i^{sing}(-, \z)(f)$ is bijective for any $0 \leq i < w$ and $H_w^{sing}(-, \z)(f)$ is injective).
\end{lem}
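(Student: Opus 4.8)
Lemma \ref{homology}: The plan is to reduce everything to statements about the cone of $f$ and then invoke the universal coefficient theorems, using the finite generation of all cohomology and homology groups (Remark \ref{finiteness}(\ref{ii1})). First I would phrase the hypotheses homologically: a morphism $f: X \to Y$ being $w$-connected with respect to $H^*_{sing}(-,\z)$ means that the relative cohomology groups $H^i(\mathrm{Cone}(f),\z)$ — equivalently, the cohomology of the mapping cone of the induced map of singular chain complexes — vanish for $i \le w$; similarly $w$-connectedness with respect to $H_*^{sing}(-,\z)$ means the relative homology groups $H_i(\mathrm{Cone}(f),\z)$ vanish for $i < w$ and $H_w(\mathrm{Cone}(f),\z)$ is the only potential obstruction. (One has to be slightly careful that ``injective in degree $w$'' on cohomology corresponds, via the long exact sequence of the pair, to the vanishing of $H^w$ of the relative term; I would set up the bookkeeping so the relative groups $H^i$ vanish for $i \le w$ exactly when $f$ is $w$-connected.)

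Then I would let $C_* = C_*(\mathrm{Cone} f)$ be the relative singular chain complex, which has finitely generated homology in each degree. The key observation is the universal coefficient theorem for cohomology: there is a natural short exact sequence
$$0 \to \operatorname{Ext}^1(H_{i-1}(C_*),\z) \to H^i(C_*;\z) \to \homm(H_i(C_*),\z) \to 0.$$
Since $\operatorname{Ext}^1(A,\z) \cong \mathrm{Tors}(A)$ and $\homm(A,\z) \cong A/\mathrm{Tors}(A)$ for a finitely generated abelian group $A$, we get $H^i(C_*;\z) = 0$ if and only if both $H_i(C_*)$ is finite torsion-free — i.e. zero — and $H_{i-1}(C_*)$ is torsion-free. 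I would then run the following elementary induction on $i$: if $H^i(C_*;\z) = 0$ for all $i \le w$, then (base step) $H_0(C_*) = 0$, and inductively $H_{i}(C_*)$ torsion-free for $i \le w-1$ together with $H_i(C_*) = 0$ forces... more precisely, from $H^i(C_*;\z)=0$ we read off $H_i(C_*)=0$ for $i \le w-1$ directly (it is a subgroup of the torsion-free finitely generated group $H^i$, hence embeds in $H^i = 0$ — wait, rather $H_i(C_*)$ is a quotient, so it injects after killing torsion). I would organize this so that vanishing of $H^i$ for $i \le w$ yields vanishing of $H_i$ for $i \le w-1$ and torsion-freeness of $H_{w}$; and conversely, vanishing of $H_i$ for $i \le w-1$ gives vanishing of $H^i$ for $i \le w-1$ and $H^w(C_*;\z) \cong \operatorname{Ext}^1(H_{w-1},\z) \oplus (\text{something}) = \homm(H_w(C_*),\z)$, which is the torsion-free part of $H_w$. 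This is exactly why the extra hypothesis ``$(w+1)$-connected with respect to $\mathrm{Tors}(H^*_{sing})$'' appears on the cohomology side: torsion-freeness of $H_w(C_*)$ up through the relevant range is what the $\mathrm{Tors}$ condition encodes, and conversely $w$-connectedness on homology automatically gives torsion-freeness of $H_w(C_*)$ and hence controls the torsion of $H^*$ in degrees $\le w+1$.

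The main obstacle — really the only subtle point — is getting the degree indexing exactly right at the top degree $w$, where ``bijective for $i < w$, injective for $i = w$'' on one side has to be matched against the interplay of $\homm$ and $\operatorname{Ext}^1$ in the universal coefficient sequence, and checking that the $\mathrm{Tors}$ hypothesis in degree $w+1$ is precisely what is needed (no more, no less) to pin down $H_w(C_*) = 0$. Everything else is formal: naturality of the universal coefficient sequences guarantees that the statements about $f$ translate faithfully into statements about $C_*(\mathrm{Cone} f)$, and finite generation (Remark \ref{finiteness}(\ref{ii1})) guarantees $\operatorname{Ext}^1(A,\z)\cong\mathrm{Tors}(A)$ and $\homm(A,\z)$ torsion-free, so no completion or flatness issues arise. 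I would write this out as a short self-contained argument over $\z$, with the two directions of the ``if and only if'' handled by the two universal coefficient sequences (cohomology from homology, and — for the reverse implication — reconstructing homology vanishing from cohomology vanishing together with the torsion control).
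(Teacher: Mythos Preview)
Your approach via the mapping cone differs from the paper's, which applies the universal coefficient sequence directly to $X$ and $Y$ and then asserts that the identifications $\mathrm{Tors}(H^i) \cong \mathrm{Tors}(H_{i-1})$ and $\mathrm{F}(H^i) \cong \mathrm{F}(H_i)$ let one read the two sets of conditions off one another. Both arguments share a genuine gap, because the statement as literally written is not correct. Take $f:\p^1 \to \spe\com$ and $w=2$: all singular (co)homology groups involved are torsion-free, so the $\mathrm{Tors}$ hypothesis is vacuous; $H^i(f)$ is bijective for $i<2$ and $H^2(f):0\to\z$ is injective, so the cohomological side holds; yet $H_2(f):\z\to 0$ is not injective. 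Swapping source and target (a point mapping into $\p^1$) gives a counterexample to the reverse implication.

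The source of the trouble is variance. The natural identification coming from the UCT is $\mathrm{Tors}(H^i(X,\z))\cong\ext(H_{i-1}(X,\z),\z)$, which is the Pontryagin dual of $\mathrm{Tors}(H_{i-1})$, not $\mathrm{Tors}(H_{i-1})$ itself; likewise $\mathrm{F}(H^i)\cong\homm(H_i,\z)$ is the $\z$-dual of $\mathrm{F}(H_i)$. Duality interchanges injectivity and surjectivity, so the paper's one-line translation does not go through at the boundary degree. In your cone picture the same issue appears as follows: the condition ``$H_i(f)$ bijective for $i<w$ and $H_w(f)$ injective'' is \emph{not} equivalent to any vanishing of the relative groups $R_i=H_i(\mathrm{Cone}\,f)$ alone; it forces $R_i=0$ for $i\le w-1$, but at the top it constrains the boundary maps $R_w\to H_{w-1}(X)$ and $R_{w+1}\to H_w(X)$, not $R_w$ itself. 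If one works the dualities out carefully, the cohomological package is actually equivalent to: $H_i(f)$ bijective for $i<w$, $H_w(f)\otimes\q$ surjective, and $\mathrm{Tors}(H_w(f))$ surjective. This weaker conclusion suffices for the applications, which pass through the stronger hypothesis of Lemma~\ref{complex} and in fact yield $H_w(f)$ surjective outright.
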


\begin{proof}
    %\textcolor{red}{
    By the universal coefficient theorem for cohomology, there exists an exact sequence \linebreak $0 \to  \mathrm{Ext}^1(H_{i-1}(X, \z), \z) \to H^{i}(X, \z) \to Hom(H_i(X, \z), \z) \to 0$. %} % is exact.}

   % \textcolor{red}{
   Now, recall that $\mathrm{Ext}^1(A, \z) \cong \mathrm{Tors}(A)$ for any finitely generated abelian group $A$; hence \linebreak  $\mathrm{Ext}^1(H_{i-1}(X, \z), \z) \cong \mathrm{Tors}(H_{i-1}(X, \z))$.

    %\textcolor{red}{By 
     Let $\mathrm{F}(A)$ denote the torsion-free part of an abelian group $A$. Since $H_{i-1}(X, \z),\  H_i(X, \z),$ and $ H^i(X, \z)$ are finitely generated %$\z$-modules
     abelian groups (see Remark \ref{finiteness}(\ref{ii1})), $\mathrm{Tors}(H^i(X, \z)) = \mathrm{Tors}(H_{i-1}(X, \z))$ and $\mathrm{F}(H^{i}(X, \z)) = \mathrm{F}(H_i(X, \z))$; this concludes the proof. %yields the result in question.}
\end{proof}

Now we combine Lemmas \ref{homology} %, Corollary 
 and \ref{complex} with the results above to obtain the following statements. %  for $K = \mathbb{C}$ (below $H^{sing}_*$ denotes $H^{sing}_* (-, \z)$):}

\begin{coro} \label{cgaracom}
    Assume that $X$ is a complex complete variety, $\gi: Z \to X$ is a closed immersion, $U = X \setminus Z$, $s_X: X' \to X$ is of relative dimension $\le d_s$, $U'=s_X\ob(U)$, $Z' = s_X\ob(Z)$, and there exists an affine space bundle $p_E: E \to U$ of rank $r$ such $E$ can be presented as the union of $k$ open affine subvarieties. We set $w = \hd(U') - r - k - d_s$.
Assume, moreover, that there exists an open $O\subset X$ such that $Z\subset O$ and the restriction of $s_X$ to the preimage $O'$ of $O$ is proper. Then $\gi'$ is $w$-connected with respect to %$H_*^{sing}$
      $H^{sing}_* (-, \z)$ (see Lemma \ref{homology}).
\end{coro}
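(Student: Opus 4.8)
The plan is to reduce Corollary \ref{cgaracom} to the étale connectivity statement already established in Theorem \ref{tgara}(2), and then to transport that statement into singular homology by the two bridging lemmas of this section. First I would observe that all the hypotheses of Theorem \ref{tgara} are satisfied here: $s_U:U'\to U$ has relative dimension $\le d_s$, and $p_E:E\to U$ is an affine space bundle of rank $r$ with $E$ covered by $k$ open affine subvarieties, so in the notation of that theorem $h=r+k+d_s$ and $w=\hd(U')-h$. Since in addition there is an open $O\subset X$ with $Z\subset O$ on whose preimage $O'$ the morphism $s_X$ is proper, Theorem \ref{tgara}(2) applies verbatim and yields that $\gi':Z'\to X'$ is $w$-connected with respect to $H^*_n$, hence with respect to $H^*_\ell$ for every prime $\ell$ (apply the conclusion with $n$ replaced by each prime $\ell$; note $\hd(U')=\hd(U',\ell)$ may a priori depend on $\ell$, but the displayed $w$ is defined using the ambient $\hd(U')$, so to be safe one takes $w=\hd(U')-r-k-d_s$ with the cohomological depth of $U'$ for the relevant coefficients — for a clean statement one may simply note $w\le\hd(U',\ell)-r-k-d_s$ always and connectivity is monotone in $w$).

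Next I would feed this into Lemma \ref{complex}: a morphism of $\com$-varieties that is $w$-connected with respect to $H^*_\ell$ for every prime $\ell$ is $w$-connected with respect to $H^*_{sing}(-,\z)$ and $(w+1)$-connected with respect to $\mathrm{Tors}(H^*_{sing}(-,\z))$. Applying this to $f=\gi'$ gives exactly those two singular-cohomology conclusions. Finally I would invoke Lemma \ref{homology}, which says that for a morphism of $\com$-varieties being $w$-connected with respect to $H^*_{sing}(-,\z)$ together with being $(w+1)$-connected with respect to $\mathrm{Tors}(H^*_{sing}(-,\z))$ is equivalent to being $w$-connected with respect to singular homology $H_*^{sing}(-,\z)$. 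This yields that $\gi'$ is $w$-connected with respect to $H^{sing}_*(-,\z)$, which is the assertion.

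There is essentially no new mathematical content to prove; the only points requiring care are bookkeeping ones. I should make sure the value of $w$ in the statement matches the one produced by Theorem \ref{tgara} — i.e.\ $h=r+k+d_s$, so $w=\hd(U')-r-k-d_s$ as claimed — and I should confirm that the properness hypothesis on $O'$ is used precisely to pass from assertion (1) of Theorem \ref{tgara} (a ``goodness'' statement about $s_U$) to assertion (2) (a genuine connectivity statement about $\gi'$), exactly as in Theorem \ref{tdef}(3). I expect the main (and only) obstacle to be the coefficient-dependence subtlety of $\hd(U')$: strictly speaking one wants the étale connectivity for \emph{every} prime $\ell$, and $\hd(U',\ell)$ could vary with $\ell$ (cf.\ Remark \ref{rhamm}(2)); the clean fix is to phrase everything in terms of the fixed integer $w=\hd(U')-r-k-d_s$ using whichever notion of cohomological depth is intended in the statement, and to note that $\hd(U',\ell)\ge\hd(U')$ is not automatic, so either one restricts to varieties that are $\znz$-cohomologically complete intersections (Remark \ref{rthiso}(1)) where $\hd(U',\ell)=\dim U'$ for all $\ell$, or one simply reads $\hd(U')$ in the statement as $\inf_\ell\hd(U',\ell)$. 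With that understood, the proof is the three-line chain Theorem \ref{tgara}(2) $\Rightarrow$ Lemma \ref{complex} $\Rightarrow$ Lemma \ref{homology}.
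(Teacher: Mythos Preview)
Your proposal is correct and matches the paper's proof, which is literally the one-line remark that Corollary \ref{cgaracom} is obtained by combining Lemmas \ref{homology} and \ref{complex} with Theorem \ref{tgara}(2). The coefficient-dependence subtlety you flag is real and is handled by the paper exactly as you suggest: the remark following Corollary \ref{csomcom} says explicitly that in the singular-homology reformulations one should read $\hd(-)$ as $\min_{\ell\in\p}\hd(-,\ell)$.
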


%Below we 
Next, Theorem \ref{tbarth}(3) and Theorem \ref{tsom} easily imply the following statements.

\begin{coro}\label{csomcom}
     Assume that $K = \mathbb{C}$. 
     
Let $t:T\to \p^N$ be a closed embedding, where $T$ is a variety of  equational excess $0$ and dimension $d$. %, $\dim T=d$,

     1. %Let $t:T\to \p^N$ be a closed embedding, % where $T$ is %a connected %local complete intersection       
     %variety of dimension $d$; % and  equational excess $0$; 
     Let  $v:V\to \p^N$ be a proper morphism of relative dimension $\le d_v$, %where $V$ is a connected local complete intersection variety of dimension $k$.
      and assume that %both $T$ and 
      $V$ is also a variety of  equational excess $0$ and dimension 
 $k$. Then the morphism $u: v\ob (T)\to V$ %and the base change $u_1'$ of $u$ to $V_1'$ are 
       is  $w$-connected with respect to $H^{sing}_*$, where  $w = d - N + \min (d+1, k-d_v)$.
     
     2. Assume that %$X$ is a connected projective %local complete intersection      variety of dimension $d$ ; 
      $E$ is an ample vector bundle of rank $r$ over $T$, and $s$ is a %non-zero 
       section of $E$, $Z$ is the zero locus of $s$. Then the embedding $\gi:Z\to T$ is  $d-r$-connected with respect to $H^{sing}_*$.

\end{coro}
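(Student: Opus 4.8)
The plan is to deduce both assertions from Theorems \ref{tbarth}(3) and \ref{tsom}, applied over every prime $\ell$, and then to convert the resulting $H^*_\ell$-connectivity into $H^{sing}_*(-,\z)$-connectivity by means of Lemmas \ref{complex} and \ref{homology}. The first thing to record is what equational excess $0$ buys us: by Remark \ref{rlci}(2) the varieties $T$ and $V$ are $\znz$-cohomologically complete intersections, so combining this with Remark \ref{rlci}(1) we get $\hd(T,m)=\dim T=d$ and $\hd(V,m)=\dim V=k$ for every $m$ prime to $p=0$. Moreover, for any nonempty open $W\subseteq T$ Proposition \ref{hd}(\ref{itlocal}) gives $\hd(W,m)\ge\hd(T,m)=d$, while Remark \ref{rlci}(1) gives $\hd(W,m)\le\dim W\le d$; hence $\hd(W,m)=d$.

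For assertion (1) I fix a prime $\ell$ and apply Theorem \ref{tbarth}(3) with the role of $n$ played by $\ell$. In the notation there, $T(0)=T$ by Proposition \ref{hd}(\ref{itlocal}) (every open subvariety of $T$ has cohomological depth $\ge\hd(T)$), so $\phi(0)=-1$ and, since $d\le N$, $\theta(0)=\hd(T)+\min(0,\hd(T)-N)+1=2d-N+1$; for $k\ge1$ there is no nonempty open $W\subseteq T$ with $\hd(W)\ge\hd(T)+k=d+k$, so $T(k)=\emptyset$, $\phi(k)=\dim T=d$, and $\theta(k)=\min(0,d+2k-N)\le 0$. Thus $\sup_{k\ge0}\theta(k)=2d-N+1$, and Theorem \ref{tbarth}(3) shows that $u$ is $\min(2d-N+1,\ \hd(V)+\hd(T)-N-d_v)$-connected with respect to $H^*_\ell$, that is, $w$-connected for $w=d-N+\min(d+1,k-d_v)$. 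Since this bound does not depend on $\ell$, Lemma \ref{complex} yields that $u$ is $w$-connected with respect to $H^*_{sing}(-,\z)$ and $(w+1)$-connected with respect to $\mathrm{Tors}(H^*_{sing}(-,\z))$, and Lemma \ref{homology} re-expresses this as $w$-connectedness with respect to $H^{sing}_*(-,\z)$.

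For assertion (2) I again fix a prime $\ell$ and apply Theorem \ref{tsom} with $X=T$ and $n=\ell$: the embedding $\gi$ is $(\hd(T\setminus Z,\ell)-r)$-connected with respect to $H^*_\ell$. If $Z=T$ then $\gi=\id_T$ and there is nothing to prove; otherwise $T\setminus Z$ is a nonempty open subvariety of $T$, so $\hd(T\setminus Z,\ell)=d$ by the computation above, and $\gi$ is $(d-r)$-connected with respect to $H^*_\ell$. Running over all primes $\ell$ and invoking Lemmas \ref{complex} and \ref{homology} exactly as in the previous paragraph gives that $\gi$ is $(d-r)$-connected with respect to $H^{sing}_*(-,\z)$.

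The argument is essentially bookkeeping, and the only step that deserves attention is the verification that equational excess $0$ forces $T(0)=T$ and $T(k)=\emptyset$ for $k\ge1$: this makes $\sup_{k\ge0}\theta(k)$ collapse to $2d-N+1$ and, just as importantly, makes the connectivity bounds coming out of Theorems \ref{tbarth}(3) and \ref{tsom} independent of $\ell$, which is precisely the hypothesis needed to pass to singular (co)homology via Lemma \ref{complex}.
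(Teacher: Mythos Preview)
Your proof is correct and follows exactly the approach the paper indicates (the paper's proof is just the single line ``Theorem \ref{tbarth}(3) and Theorem \ref{tsom} easily imply the following statements,'' combined with the preceding sentence invoking Lemmas \ref{complex} and \ref{homology}); you have simply filled in the bookkeeping that the paper omits, including the explicit computation of $\theta(0)=2d-N+1$ and the observation that $\hd(W,\ell)=d$ for every nonempty open $W\subseteq T$, which makes the bounds $\ell$-independent.
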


\begin{rema}

\begin{enumerate}

\item  Similarly, for $K=\com$ it's possible to re-formulate Theorem \ref{tbarth}(2,3), Corollary \ref{hehe}, and Theorem \ref{tsom}  in terms of singular homology  and cohomological depth; cf. Corollary \ref{czl}.  %in the complex case and in terms of $\ell$-adic etale cohomology in general case.
 For this purpose, one should take the minima of the corresponding $hd(-,\ell)$ for all %prime
 $\ell\in\p$.

     \item Moreover, one can easily generalize Lemma  \ref{complex} as follows: if a morphism $f$ (of complex varieties) is $w$-connected with respect to $H_\ell^*$ for $\ell$ running through a non-empty set $L\subset \p$ and $S=\p\setminus L$ then 
     $f$ is also  $w$-connected with respect to $H^{*}_{sing}(-, \z[S\ob])$ and  $(w+1)$-connected with respect to $\mathrm{Tors}(H^{*}_{sing}(-, \z[S\ob]))$. This statement immediately yields the corresponding $\z[S\ob])$-singular cohomology and homology versions of Theorem \ref{tbarth}(2,3), Corollary \ref{hehe}, and Theorem \ref{tsom}. 

    These statements are actual if the corresponding values of $H(-,\ell)$ depend on $\ell$; cf. Remarks \ref{rhamm}(\ref{iexa1}) and  %Remark 
     %\ref{rlyub}(3)
     \S\ref{srem}.\ref{irlyub}.  % Example \ref{exa2}.

\item 
 If in Corollary \ref{csomcom}(2) we assume in addition that $X$ is smooth %(every smooth variety is  stLCI), 
then we obtain the original Sommese's theorem (see Theorem 7.1.1 of \cite{lazar}).

\item %\textcolor{red}{%The statements similar to Lemma \ref{homology} and %Corollary 
     Lemmas \ref{homology} and \ref{complex} are actually valid for arbitrary topological spaces with finitely generated singular homology.%}

     %Лемму \ref{complex} можно обобщить в следующем ключе: если морфизм комплексных многообразий is  $w$-connected with respect to $H_\ell^*$ for any prime $\ell$ except p_1, ... p_k, то можно утверждать то же самое, но не для \z, а для \z[p_1^{-1}, \ldost p_k^{-1}] коэффициентов (например, это может быть полезно в Example \ref{exa2}). Аналогично можно обобщить Corollary \ref{cgaracom} and Corollary \ref{csomcom}.
     
   \end{enumerate} 
    %Если дополнительно предположить в 4.8, что X - гладкое, то получаем теорему Соммезе из книжки (\textcolor{red}{The above yields us the Sommese's theorem (see Theorem 7.1.1 of \cite{lazar}) if we take $X$ is smooth (then $c = 0$).})

\end{rema}

\section{%Further remarks (including the relation to earlier results)
Some remarks and a fundamental group statement}\label{srem}

\begin{enumerate}
%{rlyub}
\item\label{irlyub}
Parts 1 and 2 of %our theorem 
 Theorem \ref{tbarth} are certain Barth-type theorems closely related to (part (iv) of) Theorem 10.5 of \cite{lyubez}. Note that the methods of ibid. are quite distinct from ours. The main method for comparing the cohomology of $\p^N$ with that of $T$ in loc. cit. is to look at the \'etale cohomological dimension of $\p^N\setminus T$. It appears that this argument (that relies on the well-known Proposition 9.1 of ibid.) can only be applied to closed embeddings of projective varieties; in particular, it cannot yield Theorem \ref{tbarth}(3). % part 3 of our theorem.

Moreover, it is not clear how to compare the %degree
 estimate provided by Theorem 10.5(iv) of ibid. with our statements. Note that if $T$ is (say) equidimensional of equational excess at most $e$ and of codimension $c$ in $\p^N$ then $t$ is $N-2c-2e+1$-connected; see part 1 of our theorem and Proposition \ref\hd(\ref{iolstci}). Now, loc. cit. implies that $t$ is  $N-2c-e'+1$-connected, where $e'+c$ is the number of equations needed to define $T$ locally set-theoretically {\bf inside $\p^N$}. %Hence it is not clear how to compare this connectivity statement with our ones (
 Now, it is not clear how to compare $e'$ with $e$; see Remark \ref{rlci}(3) above.   %general estimates 
 
 Note also that loc. cit. actually gives an even better estimate whenever "one does not need too many equations for defining $T$ outside a subvariety of dimension $\le 1$". Now, %part 2 of our theorem 
  Theorem \ref{tbarth}(2) gives an improvement of part 1 if $T$ is (say) $\znz$-cohomologically a complete intersection outside a subvariety of any "small" dimension; yet we are not able to "ignore" this subvariety in the estimate.

 Lastly, one can certainly combine all of the parts of   \cite[Theorem 10.5]{lyubez} %loc. cit. 
 with %(our methods and)
 Proposition \ref{equiv} (similarly to the proof of Theorem \ref{tbarth}(2)) to obtain some new statements.

 \item\label{irlyub2} Let us give a concrete example where Theorem \ref{tbarth}(1) gives a better connectivity estimate than (Theorem 10.5 of) \cite{lyubez}.

  Take $K = \com$. We claim that  arguments of \cite[\S3]{Ryd} yield a closed embedding 
 $t:T=S^2 \p^3\to \p^9$; see \S\ref{App} below for the detail. 
Moreover,  $\hd(T, n) = 6$ for odd $n$ and $\hd(T, n) = 5$ for even $n$; hence the equational excess of %$S^2 \p^3$ 
 $T$ is at least $1$. Moreover, $T \setminus \Delta$ is the maximal open subvariety of $T$ whose  equational excess is $0$, where $\Delta$ is the 'diagonal' of $S^2 \p^3$.

 Now, since $\hd(T, n) = 6$ for odd $n$, our Theorem \ref{tbarth}(1) shows that $t$ is 4-connected with respect to $H^*_n$ for these $n$.

 On the other hand, %Theorem 10.5(iv) of \cite{lyubez} only yields that $t$ is $t$ is 
 our observations easily yield that one cannot take $r$ to be less than $4$ in Theorem 10.5(iv) of \cite{lyubez}  %all the parts of  \cite[Theorem 10.5]{lyubez}.
 Hence %part (iv) of 
 loc. cit.  only implies that $t$ is $9-4-3+1=3$-connected with respect to $H_n$ for any $n>0$. The best estimate among the remaining parts of %loc. cit. 
   \cite[Theorem 10.5]{lyubez} is given by part (iii) (in this case); it only shows that $t$ is $[10/4]+[9/4]-1=3$-connected with respect to $H_n$.

  Consequently, in this case Theorem \ref{tbarth}(1) gives a better connectivity estimate for odd values of $n$ than Theorem 10.5  of \cite{lyubez}.

  We also conjecture that there exists %an example of
   a closed embedding $t':T'\to \p^m$ such that Theorem \ref{tbarth}(1) yields the $u$-connectivity of $t'$ with respect to $H_n$ for some value of $n>1$, and $t'$ is not $u$-connective with respect to $H_{n'}$ for some $n'\neq n$.  

  % Consequently, this example demonstrates that our Barth-type estimates (Theorem \ref{tbarth}(1)) can yield stronger statements than Theorem 10.5%(iv)  of \cite{lyubez} (in certain cases); cf..
 
 %, where $n$ is an odd integer.
    
\item\label{icomp} %On the other hand, if
If we take a quasi-finite morphism $s_X':X'\to X=\p^a$ for $X'$ that is not a  local complete intersection variety, the connectivity (and "goodness") estimate given by Theorem \ref{tgara}
%loc. cit. 
may be somewhat worse than %our 
 the one provided by  \S II.1.2  of \cite{gorma}. The reason for that is that in loc. cit. the estimate is calculated by means of using a rather complicated formula that gives a value better than our one if the locus where $U'$ "requires too many extra equations" has small dimension. We will possibly prove that our estimate can be improved similarly in a subsequent paper; cf. \S\ref{sfut}.\ref{impr} below.

  %We will discuss the difference between cohomological and homotopical formulations of weak Lefschetz-type results in Remark \ref{rcfulaz} below. Here we only 
  \item \label{i2p} %Put at the end??
      Theorem \ref{tgara} implies that the cohomological analogue of the aforementioned result of \cite{gorma}  is fulfilled "in the limit", that is,  the singular  cohomology of $X'$ in the corresponding degrees is isomorphic to the  limit for $\eps\to 0$ of the cohomology of $s_X\ob(Z_\varepsilon)$. Indeed, if $\gi: Z' \to X'$ is an embedding topological spaces, $\gff$ is a sheaf on $X'$, then $(\gi^{*}\gff)(Z') =  \underrightarrow{\lim}_{Z_0 \supset Z', Z_0 \text{ is open in } X'}$ $\gff (Z_0)$. Now, if $K = \mathbb{C}$ %in our assumptions 
 then $Z'$ is compact by our assumptions and $X'$ is a metric space; hence $(\gi^{*}\gff)(Z) =  \underrightarrow{\lim}$ $\gff (Z_\epsilon)$ (see Remark \ref{finiteness}(\ref{ii3})).
 %\footnote{ Recall here that  \'etale  cohomology with $\znz$-coefficients of complex varieties is isomorphic to their  $\znz$-singular cohomology; see Proposition 11.6(2) of \cite{frki}.} %Here one can either recall that \'etale  cohomology with $\znz$-coefficients of complex varieties is isomorphic to their  $\znz$-singular cohomology and consider cohomology with torsion coefficients (though one may pass to integral coefficients using a constructibility argument) or apply the "topological" version of Theorem \ref{tgara} as mentioned above; see Remark 2.3.3 of \cite{blefl} for more detail. %; here one should).
 
This cohomological statement is clearly weaker than its homotopical analogue in ibid. (see also the more general Theorem 2.1.3 of \cite{hammle2}). On the other hand the proof is simpler than the one in \cite{gorma} (at least, for readers that are  familiar with \'etale cohomology and do not know much about stratified Morse theory), and we succeeded in using it. 
 
%Moreover, considering the preimages of 'thick multiple hyperplane sections' (i.e., $s_X\ob(Z_\varepsilon)$) yields a method of computing $H^*(Z,\gi^*s_{X*}\znz_{X'})$ when $K=\com$. %For a general $K$ one can consider certain henselizations instead; see Remark 4.2 of ibid.

\item \label{itdef}
Theorem \ref{tdef}(3) is our (very simple) substitute for \cite[\S II.5A]{gorma} %*{\S II5A} 
 (that is sufficient for our applications below, and makes sense for $K$ being an arbitrary field). %? an
 %arbitrary base field characteristic); 
 Note that loc. cit. relies on the Thom's first isotopy lemma whose proof is really long (see \S I.1.5 of ibid.).

\item It is easily seen from the proof of Theorems \ref{tdef}(1) and \ref{tgara} that one can replace the "initial sheaf" $\znz_{X'}$ in these theorems by any other object of $S$ of $\dsn(X')$ such that $\gj'^*S$ belongs to  $\dsn(U')^{p\ge d}$. In particular, it suffices to assume that  $\gj'^*S$ is a locally constant sheaf. %Mention Luybeznik?????

\item
Let us now prove a certain fundamental group version of Theorem \ref{tgara}(2). %We will not need it below.

\begin{coro}\label{cpro}
  Adopt the assumptions and the notation of Theorem \ref{tgara}(2) and assume that $X'$ is connected and $\hd(U') - h \ge 1$. Then $Z'$ is connected and 
  $\pi_1(\gi')$  %see Remark ?? above?!
is surjective; here $\pi_1$  denotes the \'etale fundamental group functor.  

\end{coro}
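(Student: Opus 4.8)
The plan is to deduce both assertions from the $H^0$-part of Theorem \ref{tgara}(2), applied to $s_X$ and, more importantly, to all of its finite étale pullbacks.

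First I would recall the standard Galois-theoretic criterion: once a geometric point of $Z'$ (and its image in $X'$) is fixed, the homomorphism $\pi_1(\gi')\colon\pi_1(Z')\to\pi_1(X')$ is surjective if and only if for every connected finite étale cover $p\colon W\to X'$ the base change $W\times_{X'}Z'\to Z'$ is connected. This is immediate from the equivalence between finite étale covers and finite continuous $\pi_1$-sets, together with the fact that the image of a profinite group under a continuous homomorphism is closed (so a dense such image is everything). Thus it suffices to prove that $W\times_{X'}Z'$ is connected for every connected finite étale $p\colon W\to X'$, the case $W=X'$ giving that $Z'$ itself is connected.

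So fix a connected finite étale cover $p\colon W\to X'$ and put $s_W=s_X\circ p\colon W\to X$. I would check that $s_W$ satisfies the hypotheses of Theorem \ref{tgara}(2) with the \emph{same} auxiliary data $Z\hookrightarrow X$, $U=X\setminus Z$, $O$, and $p_E\colon E\to U$ (hence the same $r$, $k$, and $h=r+k+d_s$). Indeed $X$ is complete and $Z\hookrightarrow X$ is a closed immersion; the preimage $U'_W:=s_W^{-1}(U)$ equals $W\times_{X'}U'$, and the restriction $U'_W\to U$ of $s_W$ has relative dimension $\le d_s$, since its fibre over $u$ is $W\times_{X'}s_U^{-1}(u)$, which is finite over $s_U^{-1}(u)$; finally $s_W$ is proper over $O$, because over $O$ it factors as the finite (hence proper) morphism $p$ followed by the proper morphism $s_X|_{O'}$. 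Writing $q\colon U'_W\to U'$ for the projection (a finite étale morphism), we have $\znz_{U'_W}=q^*\znz_{U'}$ by Proposition \ref{pds}(\ref{iupstar}); and since $q^!=q^*$ by Proposition \ref{pds}(\ref{ipur}) and $q^!$ preserves $\dsn(-)^{p\ge 0}$ by Proposition \ref{ppt}(\ref{itshr}) (the relative dimension of $q$ being $0$), the membership $\znz_{U'}\in\dsn(U')^{p\ge\hd(U')}$ forces $\znz_{U'_W}\in\dsn(U'_W)^{p\ge\hd(U')}$, i.e. $\hd(U'_W)\ge\hd(U')$. Therefore Theorem \ref{tgara}(2) applies to $s_W$ and yields that the closed immersion $W\times_{X'}Z'\to W$ is $(\hd(U'_W)-h)$-connected with respect to $H^*_n$, with $\hd(U'_W)-h\ge\hd(U')-h\ge 1$. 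In particular $H^0(W,\znz)\to H^0(W\times_{X'}Z',\znz)$ is bijective; as $W$ is connected and non-empty this group is $\znz$, so $H^0(W\times_{X'}Z',\znz)\cong\znz$, which forces $W\times_{X'}Z'$ to be non-empty and connected. Taking $W=X'$ shows $Z'$ is connected, and the general case together with the criterion of the previous paragraph shows that $\pi_1(\gi')$ is surjective.

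The only step that is not purely formal is the verification that replacing $s_X$ by $s_X\circ p$ preserves all of the (several) hypotheses of Theorem \ref{tgara}(2); among these the one worth double-checking is $\hd(U'_W)\ge\hd(U')$, which rests on the perverse $t$-exactness of pullback along the étale morphism $q$ — exactly the $d=0$ instance of Proposition \ref{ppt}(\ref{itshr}) combined with Proposition \ref{pds}(\ref{ipur}). Everything else is bookkeeping.
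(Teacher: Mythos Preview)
Your argument is correct and follows essentially the same strategy as the paper: pass to a finite \'etale cover of $X'$, verify that Theorem \ref{tgara}(2) still applies to the composed morphism, and use the resulting $H^0$-isomorphism to deduce connectedness of the pulled-back $Z'$; then conclude surjectivity of $\pi_1(\gi')$ by a standard Galois-theoretic criterion. The only cosmetic difference is that the paper restricts to Galois covers (open normal subgroups of $\pi_1(X')$) and finishes with the density/closed-image argument for profinite groups, whereas you work with arbitrary connected finite \'etale covers and invoke the equivalent transitivity criterion directly. Your version is in fact a bit more careful than the paper's, since you spell out why $\hd(U'_W)\ge\hd(U')$ (via $q^!=q^*$ and $t$-exactness for the \'etale $q$), a point the paper leaves implicit when it says ``apply Theorem \ref{tgara}(2) for $\gi''$ instead of $\gi'$''.
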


\begin{proof}

Recall that a variety $Y/K$ is (geometrically) connected if and only if $H_0(Y,\znz)\cong \znz$. Thus $H_0(X',\znz)\cong \znz$, Theorem \ref{tgara}(2) implies that $H_0(Z',\znz)\cong \znz$, and we obtain that $Z'$ is connected.

 Take an open normal subgroup $H''$ of $\pi_1(X', x')$, where $x' \in Z'$.
Recall %{sga1}??!! references??!!
that $H''$
corresponds to a pointed Galois covering space $(X'' , x'')$ of $(X', x')$ (see Appendix A.I.1 of \cite{frki}),
and %that the degree of such a
 the degree of this cover equals the index of
$H''$ in $\pi_1(X', x')$. % \textcolor{red}{$\pi_1(Y, y)$
Consider %a \textcolor{red}{pointed Galois covering space $(X'' , x'')\to (X', x')$, where $x' \in Z'$,} %!!! and 
 the corresponding cartesian square 
$$\begin{CD}
Z''@>{\gi''}>>  X''\\
@VV{}V
@VV{}V \\
Z'@ >{\gi'}>> X' \end{CD} $$
Now we  apply Theorem \ref{tgara}(2) for $\gi''$ instead of $\gi'$. We obtain that %hence 
$H^0(-,\znz)(\gi'')$ is bijective; hence $Z''$ is connected as well. Thus $(Z'' , x'')\to (Z', x')$ is a pointed Galois covering space.
%\textcolor{red}{Take any open normal subgroup $H''$ of $\pi_1(X', x')$, where $x' \in Z'$. Next we recall %{sga1}??!! references??!!
%that for any variety $Y$ open normal subgroups of $\pi_1(Y, y)$ %are given by
%correspond to pointed Galois covering spaces of $Y$ (see Appendix A.I.1 of \cite{frki}),
%and that the degree of such a cover equals the index of the corresponding %subgroup $H$ in \textcolor{red}{$\pi_1(Y, y)$}.
%Hence %for the pointed Galois covering space $(X'' , x'')\to (X', x')$ corresponding to $H''$, we obtain
 Hence there is a bijection of cosets $\pi_1(Z', x')/(\pi_1(\gi')\ob (H''))\to \pi_1(X', x')/H''$ (this is equivalent to the equality of the degrees of the covering $Z'' \to Z'$ and $X'' \to X'$; see Appendix A.I of \cite{frki}).

%of \textcolor{red}{$\pi_1(X', x')$} corresponding to $X''$, we obtain a bijection of cosets \textcolor{red}{$\pi_1(Z', x')/(\pi_1(\gi')\ob (H''))\to \pi_1(X', x')/H''$ (this is equivalent to the equality of the degrees of the maps $Z'' \to Z'$ and $X'' \to X'$; see Appendix A.I of \cite{frki}}). }

%Next we recall %{sga1}??!! references??!!
%that \textcolor{red}{pointed Galois covering spaces} of any variety $Y$ %are given by 
 %correspond to open \textcolor{red}{normal} subgroups of \textcolor{red}{$\pi_1(Y, y)$ (see Appendix A.I.1 of \cite{frki})},
%and that the degree of such a cover equals the index of the corresponding %subgroup 
%$H$ in \textcolor{red}{$\pi_1(Y, y)$}.
%Hence for the subgroup $H''$ of \textcolor{red}{$\pi_1(X', x')$} corresponding to $X''$, we obtain a bijection of cosets \textcolor{red}{$\pi_1(Z', x')/(\pi_1(\gi')\ob (H''))\to \pi_1(X', x')/H''$ (this is equivalent to the equality of the degrees of the maps $Z'' \to Z'$ and $X'' \to X'$; see Appendix A.I of \cite{frki}}). % surjects onto $H$. 

%\textcolor{red}{Since the intersection of all open normal subgroups of $\pi_1(X', x')$ equals $\{e\}$,  we obtain the result in question.}

Thus %we obtain that
 the composed homomorphism $\pi_1(Z', x') \xrightarrow{\pi_1(\gi')} \pi_1(X', x') \to \pi_1(X', x') / H''$ is surjective for any open normal subgroup $H''$ of $\pi_1(X', x')$. Hence the image of $\pi_1(\gi')$ is a dense subset of $\pi_1(X', x')$. Now recall that profinite groups are compact Hausdorff spaces; thus the image of a continuous homomorphism between them is a closed subgroup. %Now we note that $\pi_1(Z', x')$ and $\pi_1(X', x')$ are profinite groups and the homomorpism $\pi_1(\gi')$ is continuous (see the remark preceding Proposition 3.3.1 of \cite{fu}), hence 
  Consequently, $\pi_1(\gi')$ is surjective.
  
\end{proof}

\begin{rema} \label{rpi1}
We conjecture that in the setting of Corollary \ref{cpro} if $X'$ is connected and $\hd(U') - h \ge 2$ then the homomorphism $\pi_1(\gi')$ is actually bijective; cf. Corollary 3.5 of \cite{sga212}. % Unfortunately, 
 Currently we %can %prove the following statement: (\textcolor{red}{if $X'$ is complete, then
  can only prove that $\Ker{\pi_1(\gi'))}$ has no continuous quotients isomorphic to $\zlz$  for any $\ell$ distinct from $p$.
    
\end{rema}

\end{enumerate}
\section{Further ideas and future plans}\label{sfut}

Now we will describe some% of our 
 more ideas related to weak Lefschetz-type theorems; %matters that will possibly 
 we plan to %pursue %be studied in the sequel to this paper. Elaborate on the idea
 elaborate on them in subsequent papers on this subject.

\begin{enumerate}

%\item\label{ireldepth} One can easily check that the goodness estimate provided by Theorem \ref{tgara}(1) is not precise (in general; cf. \S\ref{sfut}(\ref{iest}) below). In particular, it clearly makes sense to find (a "big") $w_U\in \z$ such that $s_{U*} \znz_{U'}\in \ds(U)^{p\ge w_U}$.
    
\item\label{ismooth} Clearly, %one can apply 
our results can be applied to the study of % $H^*(-,\znz(1))$, one immediately obtains certain statements about 
the Picard and Brauer groups of varieties. % mentioned (if the corresponding bound on the cohomology degrees where we have an isomorphism %notation?? is not too small).
Yet it may be rather difficult to control the infinitely divisible parts of these groups if it is not clear that they are zero. Thus it makes sense to apply the well-known Grothendieck's torsion statement and study the Brauer group of (the corresponding) smooth varieties.

\begin{comment}
    Applying our cohomological results to $H^*(-,\znz(1))$ (for $K$ being not necessary algebraically closed; cf. \cite{blefl}), one immediately obtains certain statements about the Picard and Brauer groups of varieties mentioned (if the corresponding bound on the cohomology degrees where we have an isomorphism %notation??
is not too small). One can also apply our results to get some information on the number of points of varieties over finite fields. %(note that the Gysin long exact sequence (\ref{efrgys}) mentioned below allows to relate 'the Euler-type invariants'??!! Anyway, this is obvious for the number of points??!!).

The authors do not know whether the corresponding statements are interesting (and whether these results for Brauer and Picard groups are new; cf. \S11 of \cite{lyubez} for some results on the Picard groups). %The case of a single subvariety is given by \S11 of \cite{ljub}: see Theorem 11.2(ii) for Pic??!!

%lci case??! Is there a GLES in this case??!! Suffices to have certain fundamental classes in this setting??!!
\end{comment}

    \item\label{iest} 
     %First we formulate the 'regular version' here.??
% Следуя статье \cite{hammle2}, можно рассматривать величину \textit{relative \znz cohomological depth}, обозначаемую $rhd(f)$ для $f: X \to Y$ --- морфизма алгебраических многообразий. Более точно, будем говорить, что $rhd(f) \le i$, если $f_*(\znz_X) \in \ds(Y)^{p \le i}$. Тогда в утверждении Theorem \ref{tgara} можно $\hd(U') - h$ заменить на $rhd(s_U) - r - k$. Это дает возможность обощить Theorem \ref{tgara}, например, в следующем ключе:
One can easily check that the goodness estimate provided by Theorem \ref{tgara}(1) is not precise (in general; we will discuss some cases in which the estimate can be improved soon). % below). 
 In particular, it clearly makes sense to find (a "big") $w_U\in \z$ such that $s_{U*} \znz_{U'}\in \dsn(U)^{p\ge w_U}$.

 %Respectively, 
 Moreover, as we have just noted, the smooth case of our results can be of special interest.
So, we are going to consider the case where the variety $U'$ is smooth. It appears that in this case one can replace the relative dimension $d_S$ in Theorem \ref{tgara} by a certain constant between $d_S$ and the %generic 
 dimension of the generic fiber of $s_U$. % (cf. \S\ref{sfut}(\ref{ireldepth})). %So, we want to get an improvement 
In particular, this constant should be $0$ whenever $s_U$ is (generically finite and) {\it semismall} (see Definition III.7.3 of \cite{kw} or \S0.2.2.3 of \cite{gorma}). 

The corresponding statement %is 
 should be closely related to Lemma III.7.5 of \cite{kw}; it   should be a (rather general) \'etale cohomology modification of the theorem in \S II.1.1  of \cite{gorma}.  ibid. %; see also Lemma III.7.5 of \cite{kw}. %\cite{gorma}

   \item\label{impr}
   We would also like to prove that the degree estimate %$w$???
   in Theorem \ref{tgara} can %also 
   be improved %under some other assumptions. We plan to 
similarly to \S II.1.2 of \cite{gorma}; see \S\ref{srem}.\ref{icomp} above. Both for this purpose and for the "smooth" estimate mentioned in the previous part of this section we plan to apply a certain stratification argument. %One can easily verify that 
 An important lemma here is as follows: to establish the $w$-goodness of $s_U$ it suffices to find a stratification %$U=\sqcap U_{al}$
$\{U_{\al}\}$ of $U'$ such that for all the embeddings $j_{\al}$ corresponding to it we have $u_{!} (s_U\circ j_{\al})_* j_{\al}^! \znz_{U'}\in \dsn(\spe K)^{p\ge w}$.

This result may yield a generalization of Theorem 10.5(iv) of \cite{lyubez}. %???????

\item \label{lefl} \cite{blefl} is an alternative and less accurate version of the current paper. It contains some additional remarks related to our results.
   
      \item \label{closure} It is actually not necessary to assume that $K$ is algebraically closed and we may assume that all our schemes are reduced; see Proposition 1.4(2) of \cite{blefl}. %In particular, we will possibly apply some of our results to the study of varieties over finite fields (and the number of their points over these fields).????

\item It appears that our methods and main results can  be carried over to the complex analytic context. 
 % (cf. part \ref{i3} of this remark); to
  To this end one should replace affine varieties by Stein manifolds in the formulation and  apply Theorem 5.2.16(ii) of \cite{dimca} in the proof. 
  %Moreover, if one want to deal with  self-dual perverse $t$-structures then one can treat cohomology with  coefficients $\q$-coefficients and for $\z/l\z$-ones for all prime $l$ separately, and then combine the results obtained.
   However, the authors did not check the detail here, and do not have any really interesting applications in mind. %{hammle2}??
  %\item 
\end{enumerate}

\section{A few dull proofs} \label{App}

\begin{proof}[The justification of Remark \ref{rhamm}(\ref{iexa1})]
%\textbf{%Calculation of cohomological depths in  The justification of Remark \ref{rhamm}(\ref{iexa1}).} 

Recall that  %deal with the following setting:
in this %example
 remark $N\ge 2$ and the group $\zmz$ acts on $\com^N$ as follows: $g^k ((x_1, x_2, \ldots x_N))= (\zeta_m^k \cdot x_1, \zeta_m^k \cdot x_2, \ldots \zeta_m^k \cdot x_N)$ for all $k \in \z$ and 
$(x_1, x_2, \ldots x_N)\in \com^N$ (where $g$ is a generator of  $\zmz$  and $\zeta_m$  is a primitive $m$th root of unity);   $X$ is the quotient variety $\com^N / \zmz$ by this action. By $\pi: \com^N \to X$ denote the corresponding factorization morphism. 

So let us calculate the $\znz$-cohomological depth of $X$ (for a fixed positive integer $n$).

Take  $0$ to be the origin of $\com^N$, $U=X \setminus \{\pi (0)\}$;  $\gj: U \to X$ is the corresponding open immersion, and $\gi:\spe \com\to X$ is the complementary closed immersion.

Now recall that for $c\in \z$ we have $\znz_X\in  \dsn(X)^{p\ge c}$ if and only if $\gj^!(\znz_X)\cong \gj^*(\znz_X)\in  \dsn(U)^{p\ge c}$ and $\gi^*(\znz_X)\in  \dsn(\spe \com)^{p\ge c}$; %Definition 2.1.2 and %(\spe \com)^{p\ge c+N}$;
 see %Definition 2.1.2 and  %of \cite{bbd} (cf.
 (4.0.2) and Proposition 2.2.2(iii) of \cite{bbd}. % (cf. (4.0.2) of ibid.). %???
Since $U$ is smooth, we have $\gj^*(\znz_X)\cong \znz_U\in \dsn(U)^{p\ge N}\setminus  \dsn(U)^{p\ge N+1}$ (see Proposition \ref{ppt}(\ref{ifield},\ref{itupstur})); hence $\hd(X,n)$ is equals $\min(N,\sup j:\ \gi^!\znz_X\in \dsn(\spe \com)^{p\ge j})$.

Then \S1.4.3.4 of \cite{bbd} gives a distinguished triangle $$\gi_*\gi^!\znz_X \to \znz_X \to \gj_*\gj^*\znz_X\to \gi_*\gi^!\znz_X[1].$$ % 2.2.12
Applying $x_*$ to this distinguished triangle (where $x: X \to \spe \com$ is the corresponding structure morphism) we obtain %a new 
 the following distinguished triangle:
$$x_*\gi_*\gi^!\znz_X \to x_*\znz_X \to x_*\gj_*\gj^*\znz_X \to x_*\gi_*\gi^!\znz_X[1].$$

Set $u = x \circ \gj$. Clearly, $u: U \to \spe \com$ is the corresponding structure morphism and $x \circ \gi$ is $id_{\spe \com}$. Hence the latter distinguished triangle is isomorphic to $\gi^! \znz_X \to x_* \znz_X \to u_*\znz_U\to \gi^! \znz_X [1]$.
Consequently, %we obtain the long exact sequence of cohomology:
 there exists a long exact sequence $$ \ldots \to H^{k-1}(U, \znz) \to H^k(\spe \com, \gi^! \znz) \to H^k(X, \znz) \to H^k(U, \znz) \to H^{k+1} (\spe \com, \gi^! \znz) \to\ldots$$

By Remark \ref{finiteness}(\ref{ii3}), $H^k(X, \znz) \cong H^k_{sing}(X, \znz)$ and $H^k(U, \znz) \cong H^k_{sing}(U, \znz)$.  Since $X(\com)$ is contractible, we have $H^k_{sing}(U, \znz) \cong H^{k+1} (\spe \com, \gi^! \znz)$ for all $k \ge 1$. 

We also obtain the following exact sequence: $$ 0 \to H^0(\spe \com, \gi^! \znz) \to H^0(X, \znz) \to H^0(U, \znz) \to H^{1} (\spe \com, \gi^! \znz) \to 0$$

Since $X$ and $U$ are connected, $H^0(X, \znz) \to H^0(U, \znz)$ is an isomorphism. Hence we conclude that $H^0(\spe \com, \gi^! \znz)= \ns$ and $H^1(\spe \com, \gi^! \znz) =\ns$.

Summing up, we proved that $H^0(\spe \com, \gi^! \znz) = H^1(\spe \com, \gi^! \znz) =\ns$ and $H^{k+1} (\spe \com, \gi^! \znz) \cong H^{k}_{sing} (U, \znz)$ for all $k \ge 1$. 

We also note that %Since
 $ \com^N \setminus \ns$ is simply connected; hence the map %we have
 $\pi': \com^N \setminus \ns \to U$ is an $m$-sheeted universal covering and $\pi_1(U) \cong \zmz$.

%$\pi': \com^N \setminus \ns \to U$ is the $m$-sheeted universal covering.
%Now we have 
%Next, note that %$X$ is smooth at every point except $\pi(0)$. Consequently, 
 %$U$ is smooth. Consequently, if $u$ is a Zariski point of $U$ and %(4.0.2) 
 %$i_x:x\to X$ is an embedding of Zariski point and $x\in U$ then for the corresponding morphism $i_x:x\to X$ 

Now we consider two cases.

1. Suppose that $n$ and $m$ are not coprime.

 %Hence $\pi_1(U) \cong \zmz$. %Consequently, %we obtain that 
   Our universal covering observation implies that $H^1_{sing}(U, \znz) \cong Hom(\pi_1(U), \znz) \neq 0$. Since $H^2 (\spe \com, \gi^! \znz) \cong H^1_{sing}(U, \znz)  \ne 0$, $\hd(X, n) = 2$. % by definition of the perverse $t$-structure.

2. Suppose that $n$ and $m$ are coprime. 

%Again we have 
 Since $\pi': \com^N \setminus \ns \to U$ is %the 
 an $m$-sheeted %(universal)
  covering,  %Hence 
  we have the obvious homomorphism $\pi'^{*}: H^k_{sing}(U, \znz) \to H^k_{sing}(\com^N \setminus \ns, \znz)$ and the transfer homomorphism $\tau^*: H^k_{sing}(\com^N \setminus \ns, \znz) \to H^k_{sing}(U, \znz)$ (see \S 3.H of \cite{hatcher} for the detail). The composition $\tau^* \circ \pi'^*$ is clearly the multiplication by $m$; % and it also factors through $\ns=H^k_{sing}(\com^N \setminus \ns, \znz)$.
 % By our assumptions we obtain that $\tau^* \circ \pi'^*$ is an isomorphism.
  hence it is bijective.

 Now, the singular cohomology of $\com^N\setminus\ns$ vanishes in all degrees between $1$ and $2N - 2$. Since  $\tau^* \circ \pi'^*$ factors through $\ns=H^k_{sing}(\com^N \setminus \ns)$,   %hence 
  the %singular 
 $\znz$-cohomology of $U$ vanishes in all degrees between $1$ and $2N - 2$ as well. Therefore $H^k (\spe \com, \gi^! \znz) =  0$ for all $k < 2N$. Consequently, $\hd(X, n) = N$. % by definition of the perverse $t$-structure. 
\end{proof}

\begin{proof}[The justification of \S\ref{srem}.\ref{irlyub2}] %\label{prlgmbundle}
%\textbf{The justification of \S\ref{srem}.\ref{irlyub2}}

First let us prove that  for $k=\com$ and any $N\ge 2$ there exists a closed immersion $t_N:S^2 \p^N\to \p^{\frac{(N+1)(N+2)}{2}-1}$.
For this purpose, note that one can combine the arguments used in the proof of \cite[Theorem 3.1.12]{Ryd} with Corollary 3.1.11(ii) of ibid. (instead of Corollary 3.1.11(i) of ibid. that was used in the proof of Theorem 3.1.12 of ibid.). In the notation of loc. cit. we take $S=\spe \com$, $d = 2$, $\mathcal{E} = \mathcal{O}_{\spe \com}^{N+1}$ (in particular, $r = N$) and obtain the existence of a closed embedding $t_N$ as wanted.

%The group $\ztz$ acts on $\p^N \times \p^N: g \cdot (a, b) = (b, a)$, where $a, b \in \p^N$ (and $g$ is a non-trivial element of $\ztz$); the corresponding quotient variety %$\p^N \times \p^N / \ztz$ by this action
%is just $S^2 \p^N$. 
By $\Delta\cong \p^N$ denote the image of the diagonal in $(\p^N)^2$ in $S^2\p^N$.
%Now we note that in the proof of Theorem 3.1.12 of  in the case $S = \spe \com$, $d = 2$, $\mathcal{E} = \mathcal{O}_{\spe \com}^{N+1}$ (in particular, $r = N$) we can use Corollary 3.1.11 (ii) instead of Corollary 3.1.11 (i). If we apply this theorem with this modification we obtain that there exists a closed immersion of $S^2 \p^N$ to $\p^{\frac{(N+1)(N+2)}{2}-1}$. Denote it by $t$.

 %Consider $f': \p^N \times \p^N \to \p^{\frac{(N+1)(N+2)}{2} - 1}$ defined by $f'([x_0 : x_2 : \ldots :x_N], [y_0 : y_2 : \ldots :y_N]) = [\ldots : x_i y_j + x_j y_i : \ldots]$, where $i$ and $j$ may be the same.

 %First, $f'$ is well-defined morphism (i.e. if $x_i y_j + x_j y_i = 0$ for all $i, j$, then $x_i = 0$ for all $i$ or $y_i = 0$ for all $i$).

% Since $f'(a, b) = f'(b, a)$, we have that $f'$ factorize through symmetric square of $\p^N$ (i.e. just $S^2 \p^N$). So we obtain the corresponding morphism $f: S^2 \p^N \to \p^{\frac{(N+1)(N+2)}{2}-1}$.

 %Let us consider $t: T \to \p^{\frac{(N+1)(N+2)}{2}-1}$ be the scheme theoretic image of $f$ and the corresponding factorization: $f = t \circ h$. Since $S^2 \p^N$ is proper, $h$ is surjective. Also $h$ is injective (see \S \ref{App}), hence accrording to Lemma \ref{luniver} $h$ yield an equivalence between \'etale site of $T$ and \'etale site of $S^2 \p^N$.

 Now, $S^2 \p^N$ is smooth everywhere except $\Delta$. Hence %So we obtain that 
  $S^2 \p^N = S^2U_0 \cup \ldots \cup S^2U_N \cup S$, where $S = S^2\p^N \setminus \Delta$ is smooth and $U_i\cong \af^N$ are the standard affine charts on $\p^N$. % (in particular, $U_i \cong \af^N$). 
  Now $S$ is smooth and $U_i$ are isomorphic to each other, hence $\hd(S^2 \p^N, n) = \hd(S^2 U_0, n) = \hd(S^2 \af^N, n)$ (see Proposition \ref{hd}(\ref{itlocal}))%, and  $S^2 \p^N$ and $S^2 \af^N$ have the same equational excess.

 Now, $f: \af^N \times \af^N \to \af^N \times \af^N$ is  $\ztz$-equivariant isomorphism, where $f(a, b) = (a + b, a - b)$, %. This isomorphism is $\ztz$-equivariant, where
   the action of the generator element $g\in \ztz\setminus \ns$ on the left hand side is given by $g \cdot (a, b) = (b, a)$ and on the right hand side we set $g \cdot (a, b) = (a, - b)$. %, where $g$ is non-trivial element of $\ztz$.  
    Hence $f$ %factorize through $\ztz$-quotient and we obtain 
     yields an isomorphism $f': S^2 \af^N \to \af^N \times X_2^N$ of the corresponding quotiens %is an isomorphism, where $f(a, b) = (a + b, a - b)$ 
 (see Remark \ref{rhamm} (\ref{iexa1}) for the definition of $X_n^N$). Thus combining Remark \ref{rhamm}(\ref{iexa1})  with Proposition \ref{hd}(\ref{igmbundle}) we obtain that $\hd(S^2 \af^N, n) = 2N$ for odd $n$ and $\hd(S^2 \af^N, n) = N + 2$ for even $n$. Consequently, %the conclusion above yields that
  $\hd(S^2 \p^N, n) = 2N$ for odd $n$ and $\hd(S^2 \p^N, n) = N + 2$ for even $n$; hence the equational excess of $S^2 \p^N$ is at least $N - 2$. Moreover, set $U$ to be the maximal open subvariety of  $S^2 \p^N$  %, such that its 
   whose equational excess is at most $N-3$. For $N \ge 3$ let us demonstrate that $U = S^2\p^N \setminus \Delta$. 
   
   Indeed, $S^2\p^N \setminus \Delta$ is smooth; hence its equational excess equals %0 and 
   $0 \le N - 3$ . %in our case. So
    Hence $S^2\p^N \setminus \Delta \subset U$. %On other hand, if some
     Now assume that $U$ contains a closed point $v \in \Delta$. % lies in $U$, then all points from $\Delta$ lie in $U$ by symmetry. More precisely, let $V$ be the open neighborhood of $v$, whose equational excess is at most $N - 3$, then 
      Then for any point $v' \in \Delta$ %we can find 
     there exists an open neighbourhood $V'$ of $v'$ such that $V' \cong V$, where $V'$ is obtained from $V$ by an automorphism of $S^2\p^N$. Consequentially, the equational excess of $V'$ is at most $N - 3$ and $v' \in U$. Thus we obtain that $U = S^2\p^N$, that is, the equational excess of $S^2 \p^N$ is at most $N - 3$ and we get a contradiction in this case.
   %, is $S^2\p^N \setminus \Delta$.

 %$U_i, S$ are open subvarieties of $S^2 \p^N$, $S$ is smooth and $U_i \cong S^2 \af^N = (\af^N \times \af^N) / \ztz \cong \af^N \times (\af^N / \ztz) = \af^N \times X_2^N$ (for definition $X_n^N$ see Remark \ref{rhamm} (\ref{iexa1}); $U_i$ is obtained by quotient of product of the standard affine charts).

 %Now we consider the special case $N = 3$, i.e. $h: S^2 \p^3 \to T$ and $t: T \to \p^9$:

 %Now we consider the special case $N = 3$, that is, $t_3: S^2 \p^3 \to \p^9$.

 %In this case 
 %As we have just proved, $\hd(X, n) = 6$ for odd $n$. On the other hand, the equational excess of $X=S^2\p^3$ is at least $1=3-2$. The maximal open subvariety of $X$ with zero equational excess is $X \setminus \Delta$ and $\Delta$ is of dimension $3$. %On other hand, $\hd(X, n) = 6$ for odd $n$.
 
% Hence Theorem \ref{tbarth}(1) implies that $t_3$ is 4-connected with respect to $H_n^*$ for odd $n$.

 %On the other hand, Theorem 10.5 %(iv) 
 %of \cite{lyubez} yields that $t_3$ is at most 3-connected for these $n$.

 %By 1) we have that points on the "diagonal" of $T$ requires "at least 1 extra equation". In particular, these points cann't be locally set-theoretically defined by 3 equations. Then Theorem 10.5 of \cite{lyubez} yields that $t$ is at most 3-connected. 

 %So, this example demonstrates that our Barth-type estimators (Theorem \ref{tbarth}(1)) may be better than Theorem 10.5 %(iv) 
 %of \cite{lyubez}.   
 
 %think that $h$ is an isomorphism, but we only claim that $h$ is a universal homeomorphism (see proof of this fact in $\S \ref{App}$). 

 \end{proof}

\begin{proof}[Proof of Lemma \ref{lgmbundle}] %\label{prlgmbundle}
%\textbf{%The proof 
 %(Оf Lemma \ref{lgmbundle}).}
%Obvious devisage yields: it suffices to prove the statement for $n=1$. Moreover, the Hochshild-Serre
We prove the statement by induction in $w$.

 If $w<0$, then the statement is vacuous. Now assume for some $s\ge -1$ that the statement  is fulfilled for $w=s$; assume also that
 %$H^i(-,\znz) (f)$ and $H^i(-,\znz)(f')$ are bijective for all $i<s$ and are injective for $i=s$
  $f$ and $f'$ are $s$-connected with respect to $H_n^*$. We should verify that $H^{s}(-,\znz)(f)$ is surjective and $H^{s+1}(-,\znz)(f)$ is injective if and only if the same is true for $f'$.
  
  We recall the existence of certain Gysin long exact sequences for $G_m$-bundles.

For any %locally trivial 
$G_m$-bundle $b:X'\to X$ we present  $X'$ as the complement to $X$ for the line bundle $X''=X\times \af^1/G_m$ over $X$ (we consider the diagonal action of $G_m$ on the product and the zero section embedding $X\to X''$).  Corollary 1.5 of \cite{sga5e7} yields
  the existence of a 
%Proposition 5.2 of \cite{friethomfibr} for the 'cohomological setting' (is smoothness needed??!!!!)??!! 
 long exact sequence  
\begin{equation}\label{efrgys} \begin{aligned} %E(X',r)=
\dots \to
 H^{s-1}(X',\znz)\to H^{s-2}(X,\znz)\to H^s(X,\znz) \to \\
 H^s(X',\znz)\to H^{s-1}(X,\znz) \to  H^{s+1}(X,\znz) \to
  \dots 
  \end{aligned}
\end{equation} %for a general $G$, we would have a spectral sequence that would relate the cohomology of $X'$ with $H^*(X,l.c. sheaves)$; this is ok??!!
and that this sequence is functorial with respect to $b$ (this long exact sequence may look somewhat strange since we have no need to keep track of Tate twists).

%Denote by $C$ the cone complex $E(B',t)\to E(A',t)$.
Now we consider these long exact sequences for $A'$ and $B'$:
\begin{equation}\resizebox{.85\hsize}{!}{$ \label{emles} 
  \begin{CD}
 %\mathbf{ 
H^{s-1}(A',\znz)@>{}>> H^{s-2}(A,\znz)@>{}>> H^s(A,\znz) @>{}>> 
 H^s(A',\znz)@>{}>> H^{s-1}(A,\znz) @>{}>>  H^{s+1}(A,\znz) \\
@VV{H^{s-1}(-,\znz)(f')}V@VV{H^{s-2}(-,\znz)(f)}V@VV{H^{s}(-,\znz)(f)}V@VV{H^{s}(-,\znz)(f')}V@VV{H^{s-1}(-,\znz)(f)}V @VV{H^{s+1}(-,\znz)(f)}V\\
H^{s-1}(B',\znz)@>{}>> H^{s-2}(B,\znz)@>{}>> H^s(B,\znz) @>{}>> 
 H^s(B',\znz)@>{}>> H^{s-1}(B,\znz) @>{}>>  H^{s+1}(B,\znz)
\end{CD}$}\end{equation}
The five lemma yields: if $H^{s}(-,\znz)(f)$ is bijective and $H^{s+1}(-,\znz)(f)$ is injective, then $H^{s}(-,\znz)(f')$ is bijective (here we apply the lemma to the last five columns of (\ref{emles})); if  $H^{s}(-,\znz)(f')$ is bijective, then $H^{s}(-,\znz)(f)$ is bijective also  (here we apply the lemma to the first five columns of (\ref{emles})). Next, we apply the four lemma on monomorphisms to the last four columns of the diagram 
\begin{equation} \resizebox{.82\hsize}{!}{$
 \label{emles2} 
  \begin{CD}
 %\mathbf{ 
 H^s(A',\znz)@>{}>>  H^{s-1}(A,\znz) @>{}>>  H^{s+1}(A,\znz)   @>{}>>  H^{s+1}(A',\znz)@>{}>>  H^{s}(A,\znz) \\
@VV{H^{s}(-,\znz)(f')}V@VV{H^{s-1}(-,\znz)(f)}V @VV{H^{s+1}(-,\znz)(f)}V@VV{H^{s+1}(-,\znz)(f')}V@VV{H^{s}(-,\znz)(f)}V \\
 H^s(B',\znz)@>{}>> H^{s-1}(B,\znz) @>{}>>  H^{s+1}(B,\znz)@>{}>>  H^{s+1}(B',\znz)@>{}>>  H^{s}(B,\znz) 
\end{CD} $}
\end{equation}
This
 yields the following  implication: if $H^{s+1}(-,\znz) (f)$ is injective  then $H^{s+1}(-,\znz)(f')$ is injective as well. Lastly (applying the lemma to the first four columns of  (\ref{emles2})) we obtain the following:  if $H^{s}(-,\znz)(f')$ is bijective and $H^{s+1}(-,\znz)(f')$ is injective, then $H^{s+1}(-,\znz)(f)$ is injective.

Hence we obtain the inductive assumption for $w=s+1$.
\end{proof}

\begin{proof}[Proof of Lemma \ref{product}] \label{prproduct} 
  %\textbf{%The proof 
  % [Of Lemma \ref{product}].}  
    Now 
 let us rewrite the statement of Lemma \ref{product} in terms of  %$D^+(Sh^{et}(\spe K, \znz-\modd))
 $\dsn(\spe K)\cong D^b(\znz-\modd)$ and of the total derived functors $RH(-)$.

 By $pr$ denote $x \times id_Y$; by $C_X$ and $C_Y$ denote $x_{*}(\znz_X) $ and $ y_{*}(\znz_Y) \in D^b(\znz-\modd)$ respectively, where $y: Y \to \spe K$ is also the structure morphism.
 
 Choose some morphism $s:\pt \to X$ (that is, fix a $K$-point of $X$). Since $x\circ s=\id_{\pt}$, we obtain a splitting
 $C_X \cong \znz \bigoplus D$, where $D$ is the categorical image of the idempotent endomorphism $(\id_{C_X}- (s\circ x)_*(id_{C_X}))\in 
 D^b(\znz-\modd)(C_X, C_X)$. 
 
 Now, the Kunneth formula (see Proposition \ref{pds}(\ref{ikun})) yields that $RH(pr)$ can be described as the tensor product of the split morphism $\znz \to \znz \bigoplus D$ and $id_{C_Y}$. Hence $RH(pr): C_Y \to C_Y \otimes (\znz \bigoplus D) \cong C_Y \bigoplus (C_Y \otimes D)$ is the inclusion of the first summand.

 The cones of the morphisms $RH(x)$ and $RH(pr)$ are isomorphic to $D$ and $D \otimes C_Y$ respectively. So it suffices to prove that cohomology of $D$ is concentrated in degrees $> w$ if and only if cohomology of $D \otimes C_Y$ is concentrated in degrees $> w$. This is true since $C_Y$ has  nontrivial cohomology in degree 0 (see Proposition \ref{pds}(\ref{icohh})).
 
 %Hence $H^i(pr)$ is injective for any $i\ge 0$. Moreover, we obtain that the cohomology of $D$ is concentrated in degrees $>2d-N-2c_T$, since $D$ is a direct summand of $\co(RH(pr))\cong D\bigoplus D\otimes D$;\footnote{We put $ (\znz\bigoplus D)^{\otimes 2}$ in degree $0$ when we compute this cone.} this concludes the proof. %$\pm1, [\pm 1]$??!! 
\end{proof}

\begin{proof}[Proof of  Lemma \ref{l-adic}] %\textbf{%The proof of (Of Lemma \ref{l-adic}).} 
First we %give a proof of 
    prove the  $\zlmz$-part of the statement. % about $\zlnz$-coefficients by 
    We apply induction on $n$.

    \textit{Base case.} In the case $m = 1$ the assertion %follows from 
     is given by our assumptions.

    \textit{Induction step.} We have a short exact sequence $0 \to \zlmz \to \zlmiz \to \zlz \to 0$, which gives %us two long
     the following morphism of long exact sequences:

    \[\begin{tikzcd}[sep=tiny]
	\ldots && {H^{i} (Y, \zlz)} && {H^{i} (Y, \zlmiz)} && {H^{i} (Y, \zlmz)} && {H^{i+1} (Y, \zlz)} && \ldots \\
	\\
	\ldots && {H^{i} (X, \zlz)} && {H^{i} (X, \zlmiz)} && {H^{i} (Y, \zlmz)} && {H^{i+1} (Y, \zlz)} && \ldots
	\arrow[from=1-1, to=1-3]
	\arrow[from=1-3, to=1-5]
	\arrow[from=1-3, to=3-3]
	\arrow[from=1-5, to=1-7]
	\arrow[from=1-5, to=3-5]
	\arrow[from=1-7, to=1-9]
	\arrow[from=1-7, to=3-7]
	\arrow[from=1-9, to=1-11]
	\arrow[from=1-9, to=3-9]
	\arrow[from=3-1, to=3-3]
	\arrow[from=3-3, to=3-5]
	\arrow[from=3-5, to=3-7]
	\arrow[from=3-7, to=3-9]
	\arrow[from=3-9, to=3-11]
\end{tikzcd}\]

    By the induction hypothesis, $f$ is $w$-connected with respect to $H^{*}(-, \zlmz)$. So the five lemma and the four lemma on monomorphisms complete the induction step. %Applying  the five lemma and the four lemma on monomorphisms to the diagram we finish the %obtain the $\zlmz$-part of %the statement
     %our assertion easily.

     Now let us prove the $\z_{\ell}$-part of the assertion. %statement.

  Note that if for some $i$ and for every $m>0$ the group $ H^i(Y, \z/\ell^m\z)$ injects into $ H^i(X, \z/\ell^m\z)$, then $H^i(Y, \z_\ell) \hookrightarrow H^i(X, \z_\ell)$,  and the obvious isomorphism version of this %statement 
   implication statement is valid as well. Hence %$H^{i}(-, \z_{\ell})(f)$ is bijective for any $i < w$ and injective for $i = w$.
  $f$ is $w$-connected with respect to $H^{*}(-, \z_{\ell})$ indeed.

  Finally we %prove  
   pass to the Tors-part of %the %statement
    our lemma.

    It's well-known that $H^r(Z, \z_\ell)$ is a finitely generated $\z_\ell$-module for any variety $Z/K$, and  for any $m>0$ there exists the following long exact sequence (see \S I.12 of \cite{frki}):
%12.1 - 12.4

  \[\begin{tikzcd}[sep=tiny]
	\ldots && {H^{r-1}(Z, \mathbb{Z}/\ell^m\mathbb{Z})} && {H^r(Z, \mathbb{Z}_\ell)} && {H^r(Z, \mathbb{Z}_\ell)} && {H^r(Z, \mathbb{Z}/\ell^m\mathbb{Z})} && {H^{r+1}(Z, \mathbb{Z}_\ell)} && \ldots
	\arrow[from=1-1, to=1-3]
	\arrow[from=1-3, to=1-5]
	\arrow["{\cdot \ell^m}", from=1-5, to=1-7]
	\arrow[from=1-7, to=1-9]
	\arrow[from=1-9, to=1-11]
	\arrow[from=1-11, to=1-13]
\end{tikzcd}\]

   % \textcolor{red}{We will divide the proof into two parts (bijectivity of $\mathrm{Tors}(H^{w}(-, \z_{\ell})(f))$ and injectivity of $\mathrm{Tors}(H^{w+1}(-, \z_{\ell})(f))$):}

  %\textcolor{red}{%By the previous part of the proof we obtain Consequently, we have %  \textbf{Part 1.} 
  Now let us check that $\mathrm{Tors}(H^{w}(-, \z_{\ell})(f))$ is bijective. Since $f$ is $w$-connected with respect to  $H^{*}(-, \z_{\ell})$ and also with respect to $H_{\ell^m}^*$ for any $m>0$,  the long exact sequence above gives the following  diagram (whose rows are exact):

\[\begin{tikzcd}[sep=tiny]
	\ldots && {H^{w-1}(Y, \mathbb{Z}_\ell)} && {H^{w-1}(Y, \mathbb{Z}/\ell^m\mathbb{Z})} && {H^{w}(Y, \mathbb{Z}_\ell)} && {H^{w}(Y, \mathbb{Z}_\ell)} && \ldots && {} \\
	\\
	\\
	\\
	\ldots && {H^{w-1}(X, \mathbb{Z}_\ell)} && {H^{w-1}(X, \mathbb{Z}/\ell^m\mathbb{Z})} && {H^{w}(X, \mathbb{Z}_\ell)} && {H^{w}(X, \mathbb{Z}_\ell)} && \ldots
	\arrow[from=1-1, to=1-3]
	\arrow[from=1-3, to=1-5]
	\arrow["{ \rotatebox[origin=c]{90}{$\sim$}}", from=1-3, to=5-3]
	\arrow[from=1-5, to=1-7]
	\arrow["{ \rotatebox[origin=c]{90}{$\sim$}}", from=1-5, to=5-5]
	\arrow["{\cdot \ell^m}", from=1-7, to=1-9]
	\arrow[hook', from=1-7, to=5-7]
	\arrow[from=1-9, to=1-11]
	\arrow[hook', from=1-9, to=5-9]
	\arrow[from=5-1, to=5-3]
	\arrow[from=5-3, to=5-5]
	\arrow[from=5-5, to=5-7]
	\arrow["{\cdot \ell^m}", from=5-7, to=5-9]
	\arrow[from=5-9, to=5-11]
 \end{tikzcd}\]

  Since $H^w(X, \mathbb{Z}_\ell)$ and $H^w(Y, \mathbb{Z}_\ell)$ are finitely generated $\z_\ell$-modules (see Remark \ref{finiteness}(\ref{ii1})), there exists $N>0$  such  that $ \ell^{N}\mathrm{Tors}(H^w(X, \mathbb{Z}_\ell)) = \ell^{N}\mathrm{Tors}(H^w(Y, \mathbb{Z}_\ell)) = \ns$. Thus we have the following commutative diagrams whose rows are exact:

  \[\begin{tikzcd}[sep=small]
	{H^{w-1}(Y, \mathbb{Z}_\ell)} && {H^{w-1}(Y, \mathbb{Z}/\ell^N\mathbb{Z})} && {\mathrm{Tors}H^w(Y, \mathbb{Z}_\ell)} && 0 \\
	\\
	{H^{w-1}(X, \mathbb{Z}_\ell)} && {H^{w-1}(X, \mathbb{Z}/\ell^N\mathbb{Z})} && {\mathrm{Tors}H^{w}(X, \mathbb{Z}_\ell)} && 0
	\arrow[from=1-1, to=1-3]
	\arrow["{ \rotatebox[origin=c]{90}{$\sim$}}", from=1-1, to=3-1]
	\arrow[from=1-3, to=1-5]
	\arrow["{ \rotatebox[origin=c]{90}{$\sim$}}", from=1-3, to=3-3]
	\arrow[from=1-5, to=1-7]
	\arrow[hook', from=1-5, to=3-5]
	\arrow["{ \rotatebox[origin=c]{90}{$\sim$}}", from=1-7, to=3-7]
	\arrow[from=3-1, to=3-3]
	\arrow[from=3-3, to=3-5]
	\arrow[from=3-5, to=3-7]
\end{tikzcd}\]

    Hence applying the four lemma on epimorphisms we obtain $\mathrm{Tors}(H^w(Y, \mathbb{Z}_\ell)) \twoheadrightarrow \mathrm{Tors}(H^w(X, \mathbb{Z}_\ell))$; thus $\mathrm{Tors}(H^w(-, \mathbb{Z}_\ell)(f))$ is bijective.

    %By the previous part of the proof we obtain Consequently, we have 
   %\textbf{Part 2.} 
  It remains to verify the  injectivity of $\mathrm{Tors}(H^{w+1}(-, \z_{\ell})(f))$
   We obtain the following commutative diagram whose rows are exact:

   \[\begin{tikzcd}[sep=tiny]
	\ldots && {H^{w}(Y, \mathbb{Z}_\ell)} && {H^{w}(Y, \mathbb{Z}/\ell^m\mathbb{Z})} && {H^{w+1}(Y, \mathbb{Z}_\ell)} && {H^{w+1}(Y, \mathbb{Z}_\ell)} && \ldots && {} \\
	\\
	\\
	\\
	\ldots && {H^{w}(X, \mathbb{Z}_\ell)} && {H^{w}(X, \mathbb{Z}/\ell^m\mathbb{Z})} && {H^{w+1}(X, \mathbb{Z}_\ell)} && {H^{w+1}(X, \mathbb{Z}_\ell)} && \ldots
	\arrow[from=1-1, to=1-3]
	\arrow[from=1-3, to=1-5]
	\arrow[hook', from=1-3, to=5-3]
	\arrow[from=1-5, to=1-7]
	\arrow[hook', from=1-5, to=5-5]
	\arrow["{\cdot \ell^m}", from=1-7, to=1-9]
	\arrow[from=1-7, to=5-7]
	\arrow[from=1-9, to=1-11]
	\arrow[from=1-9, to=5-9]
	\arrow[from=5-1, to=5-3]
	\arrow[from=5-3, to=5-5]
	\arrow[from=5-5, to=5-7]
	\arrow["{\cdot \ell^m}", from=5-7, to=5-9]
	\arrow[from=5-9, to=5-11]
 \end{tikzcd}\]

   Since $H^{w+1}(X, \mathbb{Z}_\ell)$ and $H^{w+1}(Y, \mathbb{Z}_\ell)$ are finitely generated $\z_\ell$-modules (see Remark \ref{finiteness}(\ref{ii1})), there exists $N>0$  such  that $ \ell^{N}\mathrm{Tors}(H^{w+1}(X, \mathbb{Z}_\ell)) = \ell^{N}\mathrm{Tors}(H^{w+1}(Y, \mathbb{Z}_\ell)) = \ns$. Thus we have the following commutative diagrams whose rows are exact:

    \[\begin{tikzcd}[sep=small]
	{\mathrm{Tors}H^{w}(Y, \mathbb{Z}_\ell)} && {H^{w}(Y, \mathbb{Z}/\ell^N\mathbb{Z})} && {\mathrm{Tors}H^{w+1}(Y, \mathbb{Z}_\ell)} && 0 \\
	\\
	{\mathrm{Tors}H^{w}(X, \mathbb{Z}_\ell)} && {H^{w}(X, \mathbb{Z}/\ell^N\mathbb{Z})} && {\mathrm{Tors}H^{w+1}(X, \mathbb{Z}_\ell)} && 0
	\arrow[from=1-1, to=1-3]
	\arrow["{ \rotatebox[origin=c]{90}{$\sim$}}", from=1-1, to=3-1]
	\arrow[from=1-3, to=1-5]
	\arrow[hook', from=1-3, to=3-3]
	\arrow[from=1-5, to=1-7]
	\arrow[from=1-5, to=3-5]
	\arrow["{ \rotatebox[origin=c]{90}{$\sim$}}",, from=1-7, to=3-7]
	\arrow[from=3-1, to=3-3]
	\arrow[from=3-3, to=3-5]
	\arrow[from=3-5, to=3-7]
\end{tikzcd}\]

Hence applying the four lemma on monomorphisms we obtain $\mathrm{Tors}(H^{w+1}(Y, \mathbb{Z}_\ell)) \hookrightarrow \mathrm{Tors}(H^{w+1}(X, \mathbb{Z}_\ell))$; thus $\mathrm{Tors}(H^{w+1}(-, \mathbb{Z}_\ell)(f))$ is injective.

\end{proof}

\end{document}